\newcommand{\R}{\mathbb{R}}
\newcommand{\C}{\mathbb{C}}
\newcommand{\N}{\mathbb{N}}
\newcommand{\Z}{\mathbb{Z}}
\newcommand{\E}{\mathbb{E}}
\newcommand{\PP}{\mathbb{P}}
\newcommand{\ub}{\mathbf{u}}
\newcommand{\ib}{\mathbf{i}}
\newcommand{\wh}[1]{\widehat{#1}}
\newcommand{\mc}[1]{\mathcal{#1}}
\newcommand{\ov}[1]{\overline{#1}}
\newcommand{\Att}[1]{A_{q_0}^{#1,\tau}}
\newcommand{\Ats}[2]{A_{#2}^{#1,\tau}}
\newcommand{\Lie}{\mathrm{Lie}} 
\newcommand{\Sft}{\mathfrak{S}_\mathcal{F}^\tau}
\newcommand{\Smt}{\mathfrak{S}_S^\tau}
\newcommand{\Prd}{\R\mathbb{P}^{d-1}}
\newcommand{\cl}{\mathrm{cl}}
\newtheorem{thm}{Theorem}
\newtheorem{theorem}[thm]{Theorem}
\newtheorem{lemma}[thm]{Lemma}
\newtheorem{cor}[thm]{Corollary}
\newtheorem{prop}[thm]{Proposition}
\newtheorem{corollary}[thm]{Corollary}
\newtheorem{proposition}[thm]{Proposition}
\newtheorem*{propi}{Proposition}
\newtheorem*{thmi}{Theorem}
\theoremstyle{definition}
\newtheorem{defi}[thm]{Definition}
\theoremstyle{definition}
\theoremstyle{remark} 
\newtheorem{remark}[thm]{Remark}
\newtheorem{example}[]{Example}
\newcommand{\be}{\begin{equation}}
\newcommand{\ee}{\end{equation}}
\newcommand{\GL}{\mathrm{GL}}
\numberwithin{equation}{section}
\title[Dwell-time control sets]{
Dwell-time control sets and applications to the stability analysis of 
linear switched systems
}
\date{\today}
\author{Francesco Boarotto}
\address{Dipartimento di Matematica Tullio Levi-Civita,
 Universit\`a degli studi di Padova, Italy}
\email{\href{mailto:francesco.boarotto@math.unipd.it}{\nolinkurl{francesco.boarotto@math.unipd.it}}}
\author{Mario Sigalotti}
\address{Inria Paris \& Laboratoire Jacques-Louis Lions, Sorbonne Universit\'e, Universit\'e Paris-Diderot SPC, CNRS, Inria, 75005 Paris, France}
\email{\href{mailto:Mario.Sigalotti@inria.fr}{\nolinkurl{Mario.Sigalotti@inria.fr}}}
\thanks{The authors have been supported by the ANR SRGI (reference ANR-15-CE40-0018). F.B. is also supported by University of Padova STARS Project
``Sub-Riemannian Geometry and Geometric Measure Theory Issues: Old and
New".
M.S. warmly thanks Fritz Colonius for the exchanges about the results in the paper and for his many useful suggestions. }
\subjclass[2010]{93C30, 37H15, 37L40}
\keywords{Linear switched systems, control sets, Lyapunov exponents, invariant measures.}
\begin{document}
	
\maketitle	
	
\begin{abstract}
We propose an extension of the theory of control sets to the case of inputs satisfying a dwell-time constraint. 
Although the class of such inputs 
 is not closed under concatenation,
we propose a suitably modified definition of control sets that allows to 
recover some important properties known in the concatenable case. 
In particular we apply the control set construction to dwell-time linear switched systems, characterizing their maximal Lyapunov exponent looking only at  trajectories whose  angular component is periodic. 
We also use such a construction to characterize supports of invariant measures for 
random switched systems with dwell-time constraints.
\end{abstract}	
	
	\section{Introduction}

Control sets are geometric objects 
reflecting the structural properties of the family of all reachable sets of a given control system \cite{Coloniusbook}. 
They also happen to be a very helpful  geometric tool for 
investigating the stability properties of linear switched systems. 
For a linear switched system  on $\R^d$ 
with set of modes $S\subset M_d(\R)$, one can consider the 
induced switched system on the projective space $\Prd$ obtained by looking at the angular component of the state. Under the assumption that the set of projected modes, seen as vector fields on $\Prd$, 
satisfies the Lie algebra rank condition, 
there exists a unique nonempty set $D\subset  \Prd$ such that 
the closure of the reachable set from any point in $D$
 is equal to $D$ itself. Moreover, the set $D$, which is called \emph{invariant control set},   has nonempty interior in $\Prd$ and is contained in the closure of the reachable set from any other point in $\Prd$. 
An important application of this property is a useful characterization of maximal and minimal Lyapunov exponents. Under the Lie algebra rank condition
mentioned above, indeed,  these exponents that 
characterize the asymptotic behavior of the system, and in particular its stability, 
 can be computed by looking solely at periodic inputs which generate trajectories whose angular components are also periodic.  
The idea of looking at the Lyapunov exponents associated with the restricted class of periodic trajectories
has been introduced in \cite{ArnoldKlie} in order to relate the asymptotic stability of linear stochastic systems with the behavior of their large deviations, measured in terms of $p$-th mean Lyapunov exponents.  
The characterization in terms of periodic trajectories  turns out to be useful also to prove the continuity of Lyapunov exponents (see  \cite{CK-article} and also \cite{CCS}, where invariant control sets are used to prove continuity for Lyapunov exponents associated with systems subject to persistently exciting inputs).

Another important property of control sets, which is the original motivation for their introduction in the literature \cite{AK1987}, is that they allow to characterize 
supports of invariant measures for piecewise deterministic Markov processes on compact manifolds 
\cite{BenaimColonius}.
(See also \cite{BenaimLeborgne}, where some ad hoc construction of control set
is developed for two-dimensional linear switched systems.) 

A limitation of the control set approach is that it has been developed for the point of view of controllability theory, hence control signals are allowed to vary arbitrarily as time goes. This paper, motivated by the study of switched systems, aims at presenting a first extension of the theory of control sets to 
classes of input signals with restrictions on the switching rule. 
The most studied of these classes consists of signals satisfying \emph{dwell-time constraints}, i.e., the interval between two successive switching times has length larger than a given positive constant. The most important structural difference between the classes of dwell-time and arbitrary switching signals is that the former is not closed under concatenation, in the sense that concatenating two pieces of switching signals satisfying the dwell-time constraint does not necessarily yield a dwell-time signal. Such a lack of concatenability induces the most relevant technical difficulties that we are lead to tackle in this paper.

Our contribution is based on a new suitable notion of control set adapted to the 
dwell-time setting. Instead of basing the construction on reachable sets as in the case of arbitrary switching, we restrict the attention to points which can be attained \emph{at the final time} of a concatenation of constant signals of length larger than the   dwell-time. This leads to a inner estimate of the reachable set, not necessarily connected, but which is shown to be sufficient to yield a proper notion of \emph{dwell-time control set}. 
The latter shares most existence, uniqueness, and topological properties with its counterpart for arbitrary switching. 
(Similar results could have been obtained following the approach in \cite{SanMartin1993}, at least
when the Lie algebra generated by the modes of the switched system is finite-dimensional.) 
 As a consequence, in the linear case, we extend the results of \cite{CK-article} on the characterization of the maximal Lyapunov exponent by trajectories with periodic angular component. We also describe the support of invariant measures for random switched systems with dwell-time.
Most of the paper deals with general systems on a (mostly compact) manifold. The results for linear switched systems are then obtained by considering the induced system on the projective space. 

While proving these results we also partially extend the results known for arbitrarily switching, in particular by removing the 
Lie algebra rank condition
from the characterization of Lyapunov exponents in terms of trajectories with periodic angular components. This is made possible by a new result dealing with the representation of Lie groups, which ensures that the action of the 
group of flows
associated with
 the lifted 
linear switched system has at least one compact orbit on $\mathbb{S}^{d-1}$ (or $\Prd$). This result, which we believe to be of independent interest,  
seems not to be present in the literature.  We are indebted to Uri Bader and Claudio Procesi for their crucial help in providing its proof (presented in the appendix).

The paper is organized as follows. In Section~\ref{s:LMR} we state the main results on linear switched systems contained in the paper.
Section~\ref{s:DTCS} contains the definition and construction of \emph{dwell-time control sets}, establishing their basic properties and illustrating them on a simple example on the projective circle.
In Section~\ref{s:ALSS} dwell-time control systems are specified to the case of linear switched systems and their induced dynamics on the projective space. 
In particular, we show in Theorem~\ref{thm:uniq} that if we restrict a projected linear switched systems to a compact orbit of the corresponding matrix Lie group, then the restricted system admits an unique invariant dwell-time control set (where invariance is defined in a suitable way that takes into account the dwell-time constraint). 
Based on this result, we prove in Theorem~\ref{thm:unifper} that the maximal Lyapunov exponent can be achieved looking only at trajectories with periodic angular component. 
Finally, in Section~\ref{s:stoch} we relate dwell-time control sets and the support of invariant measures for piecewise deterministic dwell-time random processes.

	\section{Linear switched systems: main results}\label{s:LMR}

We present in this section the main results that we obtain for linear switched systems, 
simplifying when necessary the assumptions  and the notations 
with respect to the following sections,  in order to 	
avoid technicalities. 

Given 
an integer $d\ge 1$, a set $S$ of $d\times d$ matrices, and a scalar $\tau\ge 0$, we consider on $\R^d$ the system
\begin{equation}\label{eq:simply}
\dot x(t)=A(t)x(t),
\end{equation}
with $A(\cdot)$ in the set $\mc{S}^\tau$  of all 
piecewise constant functions from $[0,+\infty)$ to  
$S$ such that 
\begin{equation}\label{dwe-t-c}
t_{j+1}(A)-t_j(A)\ge \tau,\qquad \forall j\ge 1,
\end{equation}
 where $(t_j(A))_j$ is the (possibly finite) increasing sequence of discontinuities of $A$.
  A solution to \eqref{eq:simply} is characterized by $A$ and the initial condition $x_0$, and is denoted by $x(\cdot;x_0,A)$.

Notice that the \emph{dwell-time condition} \eqref{dwe-t-c} is equivalent to asking that
$m(\{t_j(A)\}_j)\ge \tau$, where $m(\cdot)$ is the function computing the lower bound of the distance between  
distinct points of a subset of $\R$. 

For every $x_0\in \R^d$, define the \emph{reduced reachable set from $x_0$} 
\[R^\tau(x_0)=\{x(t;x_0,A)\mid t\ge 0,\;A\in \mc{S}^\tau,\;m(\{0\}\cup \{t_j(A)\}_j\cup\{t\})\ge \tau\}.\]
For the case $\tau=0$, the constraint $m(\{0\}\cup \{t_j(A)\}_j\cup\{t\})\ge \tau$ is empty and the set $R^0(x_0)$ is the usual \emph{reachable set from $x_0$} for system \eqref{eq:simply}, in which $A$ is seen as a piecewise constant control with values in $S$.  For $\tau>0$, the constraint $m(\{0\}\cup \{t_j(A)\}_j\cup\{t\})\ge \tau$ does play a role. The reduced reachable set is defined in this way in order to ensure, in particular, the property that
\[R^\tau(x_1)\subset R^\tau(x_0)\quad\mbox{if}\quad x_1\in R^\tau(x_0).\]

	\subsection{Closed orbits for zero-dwell systems}
A technical result that we use several times in the paper is the following. 

\begin{proposition}
Let $d\ge 1$ 
and consider a set $S$ of $d\times d$ matrices
such that $A\in S$ if and only if $-A\in S$. 
Then there exists 
$x_0\in \R^d\setminus\{0\}$ such that the set 
\[\Big\{\frac{x}{\|x\|}\mid x\in R^0(x_0)\Big\}\]
is compact. 
\end{proposition}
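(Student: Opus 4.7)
\emph{Reformulation.} The hypothesis $S=-S$ ensures that each flow $e^{tA}$ with $A\in S$ has an inverse $e^{-tA}=e^{t(-A)}$ that is again a flow of the system, so that arbitrary concatenations of such flows generate the connected Lie subgroup $G\subset \GL(d,\R)$ whose Lie algebra $\mathfrak{g}$ is generated by $S$. In particular $R^0(x_0)=G\cdot x_0$. Under the continuous radial action $g\cdot y := gy/\|gy\|$ of $G$ on $\mathbb{S}^{d-1}$, the set $\{x/\|x\|:x\in R^0(x_0)\}$ is precisely the $G$-orbit of $x_0/\|x_0\|$, so the goal becomes the existence of a closed $G$-orbit on the sphere---equivalently, a compact one, since $\mathbb{S}^{d-1}$ is itself compact.

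\emph{Induction and representation theory.} First I would reduce to the case where $G$ acts irreducibly on $\R^d$: if $\mathfrak{g}$ preserves a nontrivial proper subspace $V\subsetneq \R^d$, the symmetry hypothesis passes to $S|_V$ and the induction on $d$ applies, yielding a closed $G$-orbit on the sphere of $V$ that remains closed inside $\mathbb{S}^{d-1}$. Once irreducibility is assumed, a real version of Schur's lemma gives that the center of $\mathfrak{g}$ acts by scalars and that $\mathfrak{g}=\mathfrak{z}\oplus[\mathfrak{g},\mathfrak{g}]$, with $[\mathfrak{g},\mathfrak{g}]$ semisimple. The induced action on $\Prd$ therefore factors through a connected semisimple Lie group $H$. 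For such an $H$, highest-weight theory---or equivalently Borel's fixed point theorem applied to a minimal parabolic subgroup of the real algebraic (Zariski) closure $\mathbf{H}$ of $H$---produces a line $[x_0]\in \Prd$ whose stabilizer in $\mathbf{H}$ is parabolic, so that the $\mathbf{H}$-orbit of $[x_0]$ is a compact flag variety. Pulling back a closed $\mathbf{H}$-invariant subset of $\Prd$ to $\mathbb{S}^{d-1}$ along the double cover yields a compact $G$-invariant subset which is a disjoint union of at most two $G$-orbits, each necessarily compact.

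\emph{Main obstacle.} The delicate point is passing from the closed orbit of the Zariski closure $\mathbf{H}$ to a closed orbit of the (a priori smaller) group $G$ itself: $G$ may be a proper dense subgroup of $\mathbf{H}$ of strictly smaller dimension, as already happens for a one-parameter subgroup of a diagonal torus with irrationally related eigenvalues. Equivalently, one must verify that the stabilizer in $G$ of the selected line in $\R^d$ is cocompact in $G$, not merely in $\mathbf{H}$. This amounts to the following representation-theoretic statement: the action of any connected Lie subgroup of $\GL(d,\R)$ on $\mathbb{S}^{d-1}$ admits at least one compact orbit---exactly the result the introduction attributes to the help of U.~Bader and C.~Procesi and defers to the appendix, and which I expect to be the technically deepest step of the proof.
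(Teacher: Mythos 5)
Your \emph{Reformulation} paragraph is exactly the paper's argument for this Proposition: since $S=-S$, the reachable set $R^0(x_0)$ is the orbit $G\cdot x_0$ of the connected Lie subgroup $G\le \GL(\R,d)$ with Lie algebra generated by $S$, so the claim reduces to the existence of a closed (hence compact) $G$-orbit on $\mathbb{S}^{d-1}$, which is precisely Theorem~\ref{thm:closedorbit}. The paper states this deduction in one line and proves the theorem in the appendix. You identify the reduction correctly, but you then explicitly stop short of proving the theorem, so the attempt is incomplete.

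Your sketch of Theorem~\ref{thm:closedorbit} also takes a genuinely different route from the appendix and runs into the very obstruction you name: a closed orbit of the Zariski closure $\mathbf{H}$ of $G$ on a flag variety $\mathbf{H}/P$ need not contain any closed $G$-orbit, since $G$ can sit inside $\mathbf{H}$ as a lower-dimensional subgroup with dense orbits. The appendix avoids this descent problem entirely. It first observes that if a closed \emph{cocompact} subgroup $C$ of $B$ has a compact orbit, then so does $B$, because $B=HC$ with $H$ compact and $\mc{O}_B(x_0)=\varphi(H\times\mc{O}_C(x_0))$ is compact. It then shows every connected Lie group has a closed cocompact connected \emph{solvable} subgroup, by quotienting by the solvable radical and pulling back the $AN$ factor of an Iwasawa decomposition $KAN$ of the semisimple quotient. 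Finally, for $B$ connected solvable, the Lie--Kolchin theorem produces a common complex eigenvector $v$, hence the $B$-invariant real subspace $V=\mathrm{span}\{v+\ov{v},\,i(v-\ov{v})\}$ of dimension $1$ or $2$: in dimension $1$ the projectivization is a fixed point, and in dimension $2$ the invariant circle $\mathbb{S}(V)$ carries the action of a connected group, which is either transitive or has a fixed point. Working with cocompact subgroups of $B$ itself is what makes the passage from $\mathbf{H}$ back to $G$ unnecessary; it also removes the need for your irreducibility reduction and appeal to highest-weight theory. (As a side remark, the claim that the centre of an irreducible $\mathfrak{g}\subset\mathfrak{gl}_d(\R)$ acts by scalars is slightly off over $\R$: it acts through the commutant, which may be $\C$ or $\mathbb{H}$, so its image in $\mathrm{PGL}(\R,d)$ need not be trivial.)
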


The above result can be deduced from the following property  of the actions of linear Lie groups.

	\begin{theorem}\label{thm:closedorbit}
		Let  $B$ be 
		a connected Lie subgroup of $\GL(\R,d)$. 
		Then the action
		\be\label{eq:projaction}
			\varphi:B\times\mathbb{S}^{d-1}\to \mathbb{S}^{d-1}, \quad \varphi(b,x)=\frac{bx}{\|bx\|},
		\ee 
		induced by $B$ on the $(d-1)$-dimensional unit sphere $\mathbb{S}^{d-1}\subset \R^d$, admits at least one closed orbit in $\mathbb{S}^{d-1}$.
	\end{theorem}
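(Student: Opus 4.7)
The plan is to pass from the Lie-theoretic to the algebraic-group setting by replacing $B$ with its real Zariski closure $G = \overline{B}^{\mathrm{Zar}} \subseteq \GL(d,\R)$. Since $B$ is Euclidean-connected, $G$ is Zariski-connected (hence, as an algebraic group, Zariski-irreducible), and $B$ sits inside $G$ as a Zariski-dense subgroup. The action \eqref{eq:projaction} extends to $G$, and the problem reduces to exhibiting a closed $G$-orbit in $\mathbb{S}^{d-1}$ on which $B$ already acts transitively.

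I would then proceed by induction on $d$. The base case $d=1$ is trivial since every connected Lie subgroup of $\GL(1,\R) = \R^\ast$ lies in $\R_{>0}$ and acts trivially on $\mathbb{S}^0 = \{\pm 1\}$. For the inductive step, if $B$ admits a proper invariant subspace $W \subsetneq \R^d$, then $B|_W$ is again a connected Lie subgroup of $\GL(W)$; by induction it has a closed orbit in $\mathbb{S}(W) = W \cap \mathbb{S}^{d-1}$, and this orbit is automatically closed in $\mathbb{S}^{d-1}$. One can therefore assume that $B$, and equivalently $G$, acts irreducibly on $\R^d$. In this irreducible regime, using the Levi-Malcev decomposition $G = R \cdot L$ with $R$ the solvable radical and $L$ semisimple, the real version of the Lie-Kolchin theorem produces an $R$-invariant subspace $V \subset \R^d$ of real dimension $1$ or $2$. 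Since $R$ is normal in $G$, the $G$-saturation of $V$ is $G$-invariant, and by irreducibility equals $\R^d$; in particular $\R^d$ splits as a direct sum of $R$-isotypic pieces of dimension at most $2$ which $L$ permutes, and on each such piece the image of $R$ on the sphere is contained in a compact rotation group.

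The step I expect to be the main obstacle is the last one. A closed $G$-orbit can split into dense, non-closed $B$-orbits---for instance, an irrational one-parameter subgroup of a torus acting on $\mathbb{S}^{d-1}$ provides exactly this phenomenon---so merely producing a closed $G$-orbit is not sufficient. One must locate, among the closed $G$-orbits, one whose stabilizer in $G$ is so large that the residual action of $B$ factors through a compact quotient. This is precisely where the algebraic input acknowledged as due to Bader and Procesi enters: a careful analysis of the real representation theory of the semisimple Levi factor $L$, combined with the Zariski density of $B$ in $G$, is what guarantees the existence of such a point, whose $B$-orbit is then compact and hence closed.
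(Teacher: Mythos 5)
Your proposal takes a genuinely different route from the paper's, and the gap you flag at the end is a real one that your outline does not close. Passing to the real Zariski closure $G=\ov B^{\mathrm{Zar}}$ and reducing to the case where $B$ acts irreducibly leaves you with exactly the problem you name: a closed $G$-orbit may decompose into dense, non-closed $B$-orbits, and nothing in your sketch of the $R$-isotypic decomposition forces the residual $B$-action on a chosen closed $G$-orbit to factor through a compact group. The observation that the radical $R$ acts through rotations on each low-dimensional isotypic piece only constrains $R$, not the Levi factor, and the $B$-orbit you must control need not stay inside a single such piece. As written, the irreducible step is a pointer to where a proof would have to go, not a proof.

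The idea your proposal is missing is a cocompact-subgroup reduction, which sidesteps the irreducible case entirely and makes both the Zariski closure and the induction on $d$ unnecessary. One first shows that it suffices to find a closed cocompact subgroup $C\subset B$ with a compact orbit: writing $B=HC$ with $H\subset B$ compact, the orbit $\mc{O}_B(x_0)=\varphi\bigl(H\times\mc{O}_C(x_0)\bigr)$ is then compact. Next, every connected Lie group admits a closed cocompact connected solvable subgroup: factor out the solvable radical and apply the Iwasawa decomposition $G=KAN$ to the semisimple quotient, taking $AN$ and pulling back. This reduces the whole statement to the case where $B$ is connected and solvable, and there the real Lie--Kolchin theorem yields a $B$-invariant real subspace $V$ of dimension one or two; the action of $B$ on $V\cap\mathbb{S}^{d-1}$, a pair of antipodal points or a circle, then has a closed orbit for elementary reasons (a connected group acting on a circle is either transitive or has a fixed point). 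So while your ingredients, Levi--Malcev and real Lie--Kolchin, are the right ones, the missing move is to trade $B$ for a cocompact solvable subgroup before invoking Lie--Kolchin, so that one never has to confront a non-solvable irreducible action at all.
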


The proof of the theorem is based on rather different arguments from those developed in the rest of the paper and 
is postponed to the appendix.

	\subsection{Periodization of trajectories}

Let the set $S$ of $d\times d$ matrices  be bounded. 
The \emph{maximal Lyapunov exponent} for \eqref{eq:simply} is defined as
\begin{equation}\label{eqsup}
\lambda=\sup_{A\in\mc{S}^\tau, x_0\ne 0}\limsup_{t\to+\infty}\frac{\log(\|x(t;x_0,A)\|)}{t}.
\end{equation}

The goal of periodization is to offer a way to approximate $\lambda$ by 
solving finite-horizon maximization problems. 
In order to do so, instead of maximizing   over all trajectories
$x(t;x_0,A)$ with $x_0\ne 0$ and $A\in \mc{S}^\tau$, we might restrict our attention to those for which there exists $T>0$ such that  $A$ is $T$-periodic and $x(T;x_0,A)$ is parallel to $x_0$. 
Let us define 
$\lambda_{\rm per}$ by taking the sup in \eqref{eqsup} restricted to these trajectories (see Section~\ref{sec:periodization} for a detailed definition).  Then clearly 
$\lambda_{\rm per}\le \lambda$. 
 In the case $\tau=0$
Colonius and Kliemann proved in \cite{CK-article} that the two quantities are actually equal, under the assumption that \emph{$S$ 
satisfies the Lie algebra rank condition on $\Prd$},
that is, if the family of vector fields  on the projective space $\Prd$ 
induced by the elements of $S$ 
satisfies the Lie algebra rank condition.
We extend their result by proving equality  for every $\tau\ge 0$ and removing the 
Lie algebra rank condition 
assumption.

\begin{thmi}[Theorem \ref{thm:unifper}]
Let $d
\ge 1$ and consider a bounded set $S$ of $d\times d$ matrices. 
Then $\lambda=\lambda_{\rm per}$.
\end{thmi}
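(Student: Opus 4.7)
The inequality $\lambda_{\rm per}\le\lambda$ is immediate. For the converse, my plan is to mimic the Colonius--Kliemann strategy of \cite{CK-article}, but to replace the role of the standard invariant control set (which requires the Lie algebra rank condition on $\Prd$) by the unique invariant dwell-time control set $D$ produced by Theorem~\ref{thm:uniq}, now living inside a compact orbit whose existence is guaranteed by Theorem~\ref{thm:closedorbit}.

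First I would reduce the problem to the projected system on $\Prd$. For a nonzero solution $x(\cdot;x_0,A)$ with normalized representative $v(t)=x(t)/\|x(t)\|$, the identity
\[
\tfrac{d}{dt}\log\|x(t;x_0,A)\|=\langle A(t)v(t),v(t)\rangle=:q(\pi(x(t)),A(t)),
\]
where $\pi\colon\R^d\setminus\{0\}\to\Prd$, lets me rewrite $\lambda$ as the supremum, over $(p_0,A)\in\Prd\times\mc{S}^\tau$, of $\limsup_{t\to+\infty}\tfrac1t\int_0^t q(p(s;p_0,A),A(s))\,ds$, with $p(\cdot;p_0,A)$ denoting the trajectory of the induced projective switched system. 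The same supremum, restricted to pairs $(p_0,A)$ with $A$ being $T$-periodic and $p(T;p_0,A)=p_0$, equals $\lambda_{\rm per}$. Boundedness of $S$ yields a uniform bound on $q$, which I will repeatedly exploit to control error terms.

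Next I would apply Theorem~\ref{thm:closedorbit} to the connected Lie subgroup of $\GL(\R,d)$ generated by $\{e^{tA}:A\in S,\;t\in\R\}$ to obtain a closed orbit $N\subset\mathbb{S}^{d-1}$; its image $\ov N\subset\Prd$ is compact and forward invariant under the projected switched flow for every $A\in\mc{S}^\tau$. Restricting the projected system to $\ov N$, Theorem~\ref{thm:uniq} provides a unique invariant dwell-time control set $D\subset\ov N$. I would then pick a near-maximizing sequence $(x_n,A_n,t_n)$ with $\tfrac{1}{t_n}\log\|x(t_n;x_n,A_n)\|\to\lambda$ and, extracting an $\omega$-limit point of the projected trajectory in the compact set $\ov N$ together with the invariance property of $D$, show that one may modify $A_n$ on a bounded-length initial segment so that the projected trajectory enters the interior of $D$, paying only a $O(1)$ cost in the log-growth integral.

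The remaining and main step is to \emph{close up} the long trajectory inside $D$ into a periodic one. Since $\mc{S}^\tau$ is not closed under concatenation, a return piece cannot simply be appended at an arbitrary time; one must arrange that the trajectory reaches a target in the interior of $D$ precisely at the end of a constant-mode segment of length at least $\tau$, so that the inner-reachability property encoded in the reduced reachable set $R^\tau$ and in the definition of invariant dwell-time control set can be used to produce an admissible return. Combined with the unique invariance of $D$, this allows one to build, for every large $T$, a $T$-periodic $A\in\mc{S}^\tau$ and $p_0\in D$ with $p(T;p_0,A)=p_0$ whose average of $q$ over one period approximates $\tfrac{1}{t_n}\int_0^{t_n}q\,ds$ up to an error of order $1/T$ (the fixed initial and return pieces contribute a bounded total log-growth, absorbed in the division by $T$). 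Sending $T\to\infty$ and $n\to\infty$ yields $\lambda_{\rm per}\ge\lambda$.

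The hard part is precisely this closing-up under the dwell-time constraint: it is what motivates the very definition of dwell-time control set and the inner reachability used throughout Section~\ref{s:DTCS}, and it is here that the restriction to the compact-orbit dynamics afforded by Theorem~\ref{thm:closedorbit}---which replaces the Lie algebra rank condition of \cite{CK-article}---becomes indispensable, since outside a compact invariant set one has no guarantee that the return target can be reached by a dwell-time input at all.
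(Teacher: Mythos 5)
Your overall strategy agrees with the paper's: use Theorem~\ref{thm:closedorbit} in place of the Lie algebra rank condition to obtain a compact orbit, invoke Theorem~\ref{thm:uniq} to get the unique $\tau$-ICS $D$ inside it, and then close long near-maximizing trajectories into periodic ones by returning to $D$ with a bounded-time, dwell-time-admissible concatenation. But there are two genuine gaps.

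First, Theorem~\ref{thm:uniq} requires $S$ to be \emph{irreducible} (so does the construction of the semisimple element $W$ and the path-connectedness of $\mc{O}(s_0)\cap\mathbb{P}(V)$ in Lemma~\ref{lemma:om-lim-convex}), and your proposal never reduces to that case. The paper devotes roughly half the proof to this: via Proposition~\ref{prop:reduction} one restricts to a block upper-triangular form $A|_{E_2}=\bigl(\begin{smallmatrix}A_{11}&A_{12}\\0&A_{22}\end{smallmatrix}\bigr)$ with $S_{22}$ irreducible and $\lambda_\tau(S)=\lambda_\tau(S_{22})$, and then one must \emph{lift} a periodic-in-projection trajectory of $\Sigma^\tau_{S_{22}}$ to one of the full system. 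This lift is not automatic: it uses that the Floquet multiplier $\mu$ of the $S_{22}$-block has $\log|\mu|>\lambda_\tau(S_{11})$, so $\mu$ avoids the spectrum of $\Phi_{A_{11}}(T(A),0)$, letting one solve a fixed-point equation for the $x_1$ component. Without this, the periodic trajectory built from the compact-orbit argument controls $\lambda^{\rm per}_\tau(S_{22})$ but not $\lambda^{\rm per}_\tau(S)$.

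Second, your plan to ``modify $A_n$ on a bounded-length initial segment so that the projected trajectory enters $\mathrm{int}(D)$'' is shakier than it looks: the near-maximizing data $(x_n,A_n,t_n)$ ties the log-growth estimate to the particular pair $(x_n,A_n)$, and after modifying the initial segment you are following the \emph{same} switching law from a \emph{different} starting angle, so the resulting log-growth is no longer obviously comparable. The paper sidesteps this by exploiting linearity: it fixes $d$ linearly independent vectors $v_1,\dots,v_d$ with $\pi(v_i)\in\mathfrak C$ (possible by irreducibility plus analyticity of the orbit), notes that $B\mapsto\max_i\|Bv_i\|$ is a norm on $M_d(\R)$, and hence deduces that for each near-maximizing $A$ some $v_l$ already realizes the growth $\frac1{T(A)}\log\|\Phi_A(T(A),0)v_l\|>\lambda_\tau(S)-\varepsilon$ \emph{while starting inside $\mathfrak C$}. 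The return then comes from Corollary~\ref{cor:uniftime}, which gives a \emph{time-uniform} dwell-time return to $\mathfrak C$. You correctly identify this return-within-dwell-time step as the crux, but you do not supply the $\mathfrak C$-set mechanism or the uniformity, and the ``enter $D$ first'' route is not the argument that makes it work.
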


	\subsection{Support of invariant measures}

Consider now the case where $S$ is finite and the switching signal is a random variable.
In order to guarantee that  the dwell time-condition is verified, we assume that the switching times are independent random variables with identical probability distributions having support in $[\tau,+\infty)$. The probability of switching from one element of $S$ to another is encoded by a stochastic matrix. Under these hypotheses, it is possible to associate with the random switching system a probabilistic maximal Lyapunov exponent $\chi_\tau$, which is almost surely attained  by a trajectory of the system. (For details, see Section~\ref{s:stoch} and, in particular, Section~\ref{s:stoch-lin}.)
 
 We then have the following. 
 
 \begin{propi}[Proposition \ref{prop:supportO}]
	Let $S$ be finite and 
	assume that it satisfies the Lie algebra rank condition
on $\Prd$. 
	For every $\theta\in\Prd$, let $R^\tau(\theta)$ be the projection on $\Prd$ of $R^\tau(x)$ for any  vector $x\in \R^d\setminus\{0\}$ projecting to $\theta$. 
		Set $D=\cap_{\theta\in\Prd}\overline{R^\tau(\theta)}$. 
	Then there exists a probability measure $\nu$ on $\Prd\times S$ with 
	\[\mathrm{supp}\nu\subset\cup_{A\in S,\; t\in [0,\tau]}e^{t A}(D)\times \{A\},\] 
	absolutely continuous with respect to the Lebesgue measure, and such that 
	\[
	\chi_\tau=\int_{\Prd\times S}\langle \theta,A \theta\rangle d\nu(\theta,A).
	\]
\end{propi}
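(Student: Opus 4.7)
The plan is to combine the attractor structure of the dwell-time control set $D$ (from Theorem \ref{thm:uniq}) with classical tools for piecewise-deterministic Markov processes (PDMPs), constructing $\nu$ as a time-average of an invariant measure for the post-switch Markov chain, then deriving its support and invoking a Furstenberg--Khasminskii-type identity for the Lyapunov exponent. Under LARC on $\Prd$, Theorem \ref{thm:uniq} identifies $D=\cap_{\theta}\overline{R^\tau(\theta)}$ as the unique invariant dwell-time control set in $\Prd$; in particular $D$ is closed with nonempty interior and satisfies the invariance $e^{sA}(D)\subset D$ for every $A\in S$ and $s\geq\tau$.

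Let $\rho_\tau$ denote the common density of the switching times on $[\tau,+\infty)$ and $\Pi$ the stochastic matrix of mode transitions. The post-switch state $(\theta_n,A_n)\in \Prd\times S$ is a discrete-time Markov chain on the compact space $\Prd\times S$, whose Feller kernel combines the projective flow $\theta\mapsto e^{TA}\theta/\|e^{TA}\theta\|$ for a random dwell $T\sim\rho_\tau$ with a subsequent mode transition according to $\Pi$. Krylov--Bogolyubov then produces an ergodic invariant probability $\mu$. I construct $\nu$ from $\mu$ as an occupation-type time average, integrating the projective flow over the forthcoming dwell interval weighted by $\rho_\tau$; standard PDMP arguments show that $\nu$ is invariant for the continuous-time semigroup on $\Prd\times S$.

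For the support I first show $\mathrm{supp}\,\mu\subset D\times S$ by iterating the kernel: the attractor property of $D$ (Theorem \ref{thm:uniq}) forces the $n$-step mass to accumulate on $D\times S$, since the dwell-time reachable sets contain $D$ in their closures for every starting point. Unwrapping the time-average construction, for $\theta\in D$ and $t\geq 0$ one has $e^{tA}\theta/\|e^{tA}\theta\|\in e^{tA}(D)$; splitting $t\in[0,\tau]\cup[\tau,T]$ and using the invariance $e^{sA}(D)\subset D$ for $s\geq\tau$ collapses the second piece into $D=e^{0\cdot A}(D)$, yielding $\mathrm{supp}\,\nu\subset\cup_{A\in S,\,t\in[0,\tau]}e^{tA}(D)\times\{A\}$. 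Absolute continuity of $\nu$ then comes from two sources of smoothing: the density $\rho_\tau$ averages along the flow of each $A$, and LARC spreads mass transversely across iterated switches via a Hörmander-type hypoellipticity argument for PDMPs, in the spirit of \cite{BenaimColonius}.

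Finally, the Lyapunov formula follows from the pointwise identity $\frac{d}{dt}\log\|x(t)\|=\langle A(t)\theta(t),\theta(t)\rangle$: Birkhoff's theorem applied to the ergodic $\nu$ identifies the almost-sure exponential growth rate of $\|x\|$ with $\int\langle\theta,A\theta\rangle\,d\nu$, and uniqueness of the invariant measure under LARC (or equivalently a variational argument for the top Lyapunov exponent) gives $\chi_\tau=\int\langle\theta,A\theta\rangle\,d\nu$. The main obstacle I anticipate is the absolute continuity: smoothing from $\rho_\tau$ alone acts only along the flow direction, so obtaining a full transverse density requires carefully combining LARC with the switching randomness across iterates of the kernel in a hypoellipticity argument. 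A secondary difficulty is converting the topological attraction of $D$ into probabilistic concentration for the chain, which typically demands a Doeblin-type minorization derivable from LARC.
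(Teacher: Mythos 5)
Your proposal is correct in outline and is fundamentally the same construction as the paper's: $\nu$ is an occupation measure built from the unique ergodic invariant measure $\mu$ of the discrete post-switch chain $(Z_n)$, the support statement follows from $\mathrm{supp}\,\mu\subset D\times E$ together with the dwell-time invariance $e^{tA}(D)\subset D$ for $t\ge\tau$ (which collapses the contribution from $t\ge\tau$ into the $t=0$ piece, exactly as you sketch), and the Lyapunov formula follows from an ergodic theorem along the switching times. There are, however, two places where the paper's route is more direct and where your proposal leaves a step unjustified. First, you invoke Birkhoff for the \emph{continuous-time} process, which obliges you to verify that $\nu$ is stationary and ergodic for the continuous-time PDMP semigroup — a true but nontrivial claim that you assert via ``standard PDMP arguments'' without proof. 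The paper instead applies Birkhoff only to the discrete chain $Z_n$ (whose unique ergodic invariant measure $\mu$ supported in $D\times E$ is supplied by Theorem~\ref{thm:ergodic}), rewrites $\log(R_{T_n}/R_0)$ as a telescoping sum over dwell intervals, and then integrates $\langle\theta,A\theta\rangle$ explicitly over each random interval; the measure $\nu$ then \emph{emerges} from the computation, and one never needs to characterize it as an invariant measure of the continuous-time process. Second, the ``main obstacle'' you anticipate — absolute continuity — has already been removed upstream: Theorem~\ref{thm:ergodic}~(ii) establishes that $\mu$ is absolutely continuous with respect to Lebesgue measure (relying on \cite{Bakhtin}). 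Since $\nu$ is a superposition (an integral in $t$ and $s$) of the pushforwards $(e^{tA_i},\mathrm{Id})_*\mu_i$, each of which is a diffeomorphic image of an absolutely continuous measure, the absolute continuity of $\nu$ is immediate and no fresh hypoellipticity or H\"ormander-type argument is needed at this stage of the paper.
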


	\section{Dwell-time control sets}\label{s:DTCS}
	We introduce  in this section a general construction of control sets in the framework of nonlinear switching systems with a dwell-time constraint. In analogy with \cite{Coloniusbook}, we are going to establish the existence and some crucial geometric properties of these sets that we will extensively use later on in this paper, in particular for studying stability properties of switched systems with dwell-time.

	\subsection{Krener accessibility theorem with dwell-time  constraints}\label{sec:genconst}
	
	Let $M$ be a 
	$d$-dimensional manifold and $U$ any nonempty set. We consider the control system
	\be\label{eq:controlsys}
		\dot{q}(t)=X(q(t),u(t)),\qquad  
		u\in\mc{U},
	\ee
	where  
	$X:M\times U\to TM$ is such that $X(\cdot,u)$ is a smooth complete vector field for every $u\in U$ and 
	\[
	\mc{U}=\{ u:[0,T]\to \R^m\mid T\ge0,\;u(t)\in U\text{ for every } t\in[0,T],\;u\text{ is piecewise constant} \}.
	\]
		Let us denote by $T: \mc{U}\to [0,+\infty)$ the function associating with $u\in   \mc{U}$ the length of the time-interval on which $u$ is defined. 
		We write $\phi(\cdot, q_0,u)$ for the trajectory of \eqref{eq:controlsys} starting at a point $q_0\in M$, and driven by the control $u\in\mc{U}$ on the time-interval $[0,T(u)]$.
		Let, moreover, 
		\[\mc{F}=\{X(\cdot,u)\mid u\in U\}\]
		and denote by $\Lie(\mc{F})$ the 
		Lie algebra of vector fields on $M$ generated by $\mc{F}$ (with respect to the Lie bracket operation).
	
An assumption that we will require for several results is that 
	\be\tag{\textbf{H}}\label{eq:mainhyp}
		\Lie_q(\mc{F})=T_qM,\qquad \text{ for every } q\in M,
	\ee
which is known as the \emph{Lie algebra rank condition}. Here 
$\Lie_q(\mc{F})$ denotes the evaluation of $\Lie(\mc{F})$ at $q$, that is, 
$\Lie_q(\mc{F})=\{Y(q)\mid Y\in \Lie_q(\mc{F})\}$.

	\begin{defi}\label{defi:concatenation}
		Let $u_1,u_2\in \mc{U}$. 
		Then the concatenation $u_1*u_2\in\mc{U}$ is the piecewise constant function defined on $[0,T(u_1)+T(u_2)]$ by
		\[
		(u_1*u_2)(t)=\left\{\begin{array}{ll}
		u_1(t) & \text{on } [0,T(u_1)],\\
		u_2(t-t_1) & \text{on }(T(u_1),T(u_1)+T(u_2)].
		\end{array}
		\right.
		\] 
	\end{defi}
	Definition~\ref{defi:concatenation} immediately extends to an arbitrary finite number of elements of $\mc{U}$.
	
	\begin{defi}
		Let $\tau\ge0$. Let us consider the subset of $\mc{U}$ defined by 
		\[
		\mc{U}^\tau=\left\{ u_1*\dots*u_m \mid m\in\N,\; u_i\in\mc{U}\text{ is constant on }[0,T(u_i)] \text{ and } T(u_i)\geq \tau\mbox{ for } i=1,\dots,m \right\}.
		\]
		In particular, $\mc{U}^0=\mc{U}$. When the parameter $\tau$ is positive, it is commonly referred to as the \emph{dwell-time}. Dwell-time constraints are used to model systems where, for technological, safety, or other reasons,  the actuation of each control value cannot last less than a common bound $\tau$.

		Let us define
		\be
		\Sft=\left\{ \phi(T(u),\cdot,u)\mid u\in\mc{U}^\tau \right\},
		\ee
		that is, $\Sft$ is the subset of diffeomorphisms of $M$ given by the collection of all the 
		flows associated with controls in $\mc{U}^\tau$. Observe that
		$\Sft$ admits the following alternative characterization
		\be\label{eq:semigroup}
		\Sft=\{ e^{t_mX_m}\circ\dots\circ e^{t_1X_1}\mid m\in\N,\; X_1,\dots,X_m\in\mc{F},\;t_1,\dots,t_m\geq\tau\}.
		\ee
	\end{defi}
	Notice that $\Sft$ has the semigroup property. Indeed, for every two elements $g_1,g_2\in\Sft$, their composition $g_1\circ g_2$ is again an element of $\Sft$.

	\begin{defi}[Dwell-time attainable set]
		For every $T>0$, $\tau\ge 0$, and $q_0\in M$, we set
		\be\label{eq:att}
		\Att{T}=\{ \phi(T(u),q_0,u)\mid u\in\mc{U}^\tau,\;T(u)\le T \}.
		\ee
	\end{defi}
	
	Notice that $\Sft{(q_0)}=\bigcup_{T>0}\Att{T}$, where $\Sft{(q_0)}$ stands for the set $\{\psi(q_0)\mid \psi\in \Sft\}$. 
The semigroup property of $\Sft$ implies that $\Sft{(q_0)}$ is dwell-time positively invariant, according to the following definition. 
	\begin{defi}\label{defi:posinv}
		A set $A\subset M$ is said to be \emph{dwell-time positively invariant} if $\Sft(q)\subset A$ for every $q\in A$.
	\end{defi}

	The following is an adaptation of the well-known Krener's theorem to our current situation, that is, in presence of a dwell-time constraint.
	
	\begin{prop}\label{prop:notempty}
	Assume that \eqref{eq:mainhyp} holds true.	Then, for every $T>d\tau$ and $q_0\in M$, $\mathrm{int}(\Att{T})\neq\emptyset$.
	\end{prop}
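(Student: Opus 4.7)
The plan is to adapt the classical inductive proof of Krener's theorem to the dwell-time setting. Given $T>d\tau$, I will construct vector fields $X_1,\dots,X_d\in\mc{F}$ and times $t_1,\dots,t_d\in(\tau,+\infty)$ with $t_1+\cdots+t_d<T$ such that the end-point map
\[
\Phi_d(s_1,\dots,s_d):=e^{s_dX_d}\circ\cdots\circ e^{s_1X_1}(q_0)
\]
has rank $d$ at $(t_1,\dots,t_d)$. By the inverse function theorem, $\Phi_d$ is then a local diffeomorphism near this point, and restricting to a small neighborhood entirely contained in the open set $(\tau,+\infty)^d\cap\{\sum_i s_i<T\}$ produces the desired open subset of $\Att{T}$.

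The construction proceeds by induction on $k=1,\dots,d$. For the base step, I will use \eqref{eq:mainhyp} together with $d\ge 1$ to rule out the possibility that every $X\in\mc{F}$ vanishes at $q_0$, for in that case every iterated bracket of elements of $\mc{F}$ would also vanish at $q_0$, contradicting $\Lie_{q_0}(\mc{F})=T_{q_0}M$. Choosing $X_1\in\mc{F}$ with $X_1(q_0)\neq 0$, uniqueness of ODE solutions forces $X_1$ to be nonzero along its whole orbit through $q_0$, so $\Phi_1$ is already an immersion on $\R_+$. For the induction step, assume that $\Phi_k$ is an immersion at some $s^0\in(\tau,+\infty)^k$ with $k<d$, and let $\Sigma_k$ denote the $k$-dimensional embedded submanifold of $M$ given, by the constant rank theorem, as the image of $\Phi_k$ on a small neighborhood of $s^0$. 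The core of the argument is the claim that there exist $X_{k+1}\in\mc{F}$ and $s^*\in(\tau,+\infty)^k$ arbitrarily close to $s^0$ with $X_{k+1}(\Phi_k(s^*))\notin T_{\Phi_k(s^*)}\Sigma_k$. The proof is by contradiction: negating the claim implies that every $X\in\mc{F}$ is tangent to $\Sigma_k$ on a neighborhood of $q_k^0:=\Phi_k(s^0)$; since the vector fields tangent to a fixed submanifold form a Lie subalgebra, the same holds for all of $\Lie(\mc{F})$, and evaluating at $q_k^0$ yields $\Lie_{q_k^0}(\mc{F})\subset T_{q_k^0}\Sigma_k$, which contradicts \eqref{eq:mainhyp} since $\dim\Sigma_k=k<d$. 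Once $X_{k+1}$ and $s^*$ are fixed, the identity $(e^{tX_{k+1}})_*X_{k+1}=X_{k+1}\circ e^{tX_{k+1}}$ shows that the rank of $\Phi_{k+1}$ at $(s^*,t_{k+1}^0)$ equals $k+1$ \emph{for every} $t_{k+1}^0>0$, so we are free to take $t_{k+1}^0=\tau+\varepsilon$ for an arbitrarily small $\varepsilon>0$, which is what lets the dwell-time constraint be respected at each step.

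The only point specific to the dwell-time setting, and the source of the threshold $d\tau$ in the statement, is the need to keep the total time $\sum_i t_i$ below $T$ despite the perturbations accumulated over $d$ induction steps. This I will handle by fixing in advance some $\eta\in(0,(T-d\tau)/d)$ and requiring at each step that the perturbation from $s^0$ to $s^*$ moves each coordinate by less than $\eta/(2d)$ and that $t_{k+1}^0<\tau+\eta/2$; these choices are always available, and after $d$ steps they yield $(t_1,\dots,t_d)\in(\tau,+\infty)^d$ with $\sum_i t_i<d\tau+\eta<T$. The first paragraph then concludes the proof.
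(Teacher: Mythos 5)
Your proposal is correct and follows essentially the same route as the paper's proof: both run the standard Krener-type induction, using at each step that if every vector field in $\mc{F}$ were tangent to the $k$-dimensional image of $\Phi_k$ then the whole Lie algebra $\Lie(\mc{F})$ would be too (the paper phrases this in terms of wedge products and the surface parameterized by $(r_1,\dots,r_k)$, you phrase it via the tangent-vector-fields-form-a-subalgebra lemma, but these are the same argument), and both handle the dwell-time constraint by confining all the $t_i$ to a band of the form $(\tau,\tau+\delta)$ with $d\delta<T-d\tau$ so that the total time stays below $T$. The one small stylistic difference is that the paper also records the explicit formula for the partial derivatives (which it reuses in Proposition~\ref{prop:ReverseKren}), whereas you invoke the constant rank theorem and the identity $(e^{tX})_*X=X\circ e^{tX}$; this has no bearing on the correctness of the present proposition.
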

	\begin{proof}
	Fix $T>d \tau$ and let $\varepsilon=\frac T d-\tau>0$. 
		Since $\Lie_{q_0}(\mc{F})\neq 0$, there exists $X_1\in\mc{F}$ such that $X_1(q_0)\neq 0$. For any $\tau_1>\tau$ we have
		\be
		\frac{d}{dr_1} e^{(\tau_1+r_1)X_1}(q_0)\bigg|_{r_1=0}=e^{\tau_1X_1}_*(X_1(q_0))=
		X_1
		(q_1)\neq 0,
		\ee
		where we set $q_1=e^{\tau_1X_1}(q_0)$ for some  $\tau_1\in(\tau,\tau+\varepsilon)$ to be fixed later. Next, if $d\geq 2$, there exist $\tau_1\in(\tau,\tau+\varepsilon)$ and $X_2\in\mc{F}$ such that 
		$X_1(q_1)\wedge X_2(q_1)\neq 0$, for otherwise $\Lie_{q_1}(\mc{F})$ would be of dimension one, and \eqref{eq:mainhyp} would be violated. Notice moreover that $X_1(q_1)\wedge X_2(q_1)\neq 0$
		is still true if we vary $\tau_1$ in a small neighborhood. 
		Then, for any $\tau_2\in(\tau,\tau+\varepsilon)$, we compute
		\begin{align}
		\frac{d}{dr_1}e^{(\tau_2+r_2)X_2}\circ e^{(\tau_1+r_1)X_1}(q_0)\bigg|_{(r_1,r_2)=0}&
		=e^{\tau_2X_2}_*(X_1(q_1))=(e^{\tau_2 X_2}_*X_1)(e^{\tau_2X_2}(q_1)),\\
		\frac{d}{dr_2}e^{(\tau_2+r_2)X_2}\circ e^{(\tau_1+r_1)X_1}(q_0)\bigg|_{(r_1,r_2)=0}&
		=e^{\tau_2X_2}_*(X_2(q_1))=X_2(e^{\tau_2X_2}(q_1)).
		\end{align}
		The two tangent vectors at $e^{\tau_2X_2}\circ e^{\tau_1X_1}(q_0)$ are linearly independent, since $e^{\tau_2X_2}_*$ is a diffeomorphism of the tangent space. 
		Again, if $d\geq 3$, we set $q_2=e^{\tau_2X_2}\circ e^{\tau_1X_1}(q_0)$, and, up to eventually modifying $\tau_1,\tau_2\in(\tau,\tau+\varepsilon)$, we can find $X_3\in\mc{F}$ 
		such that 
		\[
		(e^{\tau_2X_2}_*X_1)(q_2)\wedge X_2(q_2)\wedge X_3(q_2)\neq 0,
		\]
		for otherwise $\Lie_{q_2}(\mc{F})$ would be contained in 
		the tangent plane to the surface parameterized by
		$(r_1,r_2)\mapsto e^{(\tau_2+r_2)X_2}\circ e^{(\tau_1+r_1)X_1}(q_0)$ (with $(r_1,r_2)$ in a neighborhood of $(0,0)$) and 
		\eqref{eq:mainhyp} would not hold. 
		Repeating the argument above, suppose that $d\geq k$ and that at the $k$-th step we define 
		\[
			q_{k-1}= e^{\tau_{k-1}X_{k-1}}\circ \dots\circ e^{\tau_1 X_1}(q_0).
		\]
		Then, up to slightly modifying $\tau_1,\dots,\tau_{k-1}\in (\tau,\tau+\varepsilon)$,  we can find $X_{k}\in\mc{F}$ such that
		\[
		X_k(q_{k-1})\wedge \dots\wedge ( (e^{\tau_{k-1}X_{k-1}}\circ\dots\circ e^{\tau_1X_1} )_*X_1 )(q_{k-1})\neq 0.
		\]
		Differentiating at zero the map
		\be\label{eq:maptodiff}
		(r_1,\dots,r_k)\mapsto e^{(\tau_k+r_k)X_k}\circ\dots\circ e^{(\tau_1+r_1)X_1}(q_0),
		\ee
		we find that
		\begin{align}\label{eq:family}
		\\\frac{d}{dr_i} e^{\tau_kX_k}\circ\dots\circ e^{(\tau_i+r_i)X_i} \circ\dots\circ e^{\tau_1X_1} (q_0)\bigg|_{r_i=0}&=e^{\tau_kX_k}_*( (e^{\tau_{k-1}X_{k-1}}\circ\dots\circ e^{\tau_i X_i})_*X_i )(q_{k-1})\\&=( (e^{\tau_{k}X_{k}}\circ\dots\circ e^{\tau_{i+1} X_{i+1}})_*X_i )(q_{k})
		\end{align}
		for $i=1,\dots,k$.
		The differential of \eqref{eq:maptodiff} is then of maximal rank locally at $(0,\dots,0)$ and the claim is proved after $d$ steps.
	\end{proof}

\begin{remark}\label{rmk:bestkren}
The proof of Proposition~\ref{prop:notempty} actually shows that if \eqref{eq:mainhyp} holds true, 	then, for every $\delta>0$ and $q_0\in M$ there exist $\tau_1,\dots,\tau_d\in (\tau,\tau+\delta)$ and $X_1,\dots,X_d\in \mathcal{F}$ such that $\R^d\ni (t_1,\dots,t_d)\mapsto  e^{t_dX_d}\circ\dots\circ e^{t_1X_1}(q_0)$ 
is  a local diffeomorphism at $(\tau_1,\dots,\tau_d)$.
\end{remark}
	
	\begin{proposition}\label{prop:ReverseKren}
		Let $q_0\in M$ and 
		\[
			g:\R^d\to M,\qquad g:(t_1,\dots,t_d)\mapsto  e^{t_dX_d}\circ\dots\circ e^{t_1X_1}(q_0), 
		\] 
		be a local diffeomorphism at $(\tau_1,\dots,\tau_d)$. Let $q_1=g(\tau_1,\dots,\tau_d)(q_0)$. Then 
		\[
			h:\R^d\to M,\qquad h:(t_1,\dots,t_d)\mapsto e^{-t_1X_1}\circ\dots\circ e^{-t_kX_k}(q_1),
		\] 
		is also a local diffeomorphism at $(\tau_1,\dots,\tau_d)$.
		
		In particular, there exist an arbitrarily small neighborhood $N$ of $(\tau_1,\dots,\tau_d)$ in $\R^d$ and 
		$\Omega_0$, $\Omega_1$ neighborhoods of $q_0$, $q_1$, respectively, such that for every $p_0\in\Omega_0$ and $p_1\in\Omega_1$ there exists $(t_1,\dots,t_d)\in N$ such that $p_1=g(t_1,\dots,t_d)(p_0)$.
	\end{proposition}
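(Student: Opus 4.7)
The plan is to prove both assertions simultaneously by lifting the problem to a joint map that allows the initial point to vary along with the time parameters. Introduce
\[
F:\R^d\times M \to M\times M, \qquad F(t_1,\dots,t_d,p)=\bigl(p,\; e^{t_dX_d}\circ\dots\circ e^{t_1X_1}(p)\bigr),
\]
so that $F((\tau_1,\dots,\tau_d),q_0)=(q_0,q_1)$. The strategy is to show that $F$ is a local diffeomorphism at this point via the inverse function theorem, and then to read off both conclusions from this single fact.

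First I would compute $dF$ at $((\tau_1,\dots,\tau_d),q_0)$. With respect to the decompositions $\R^d\oplus T_{q_0}M$ on the source and $T_{q_0}M\oplus T_{q_1}M$ on the target, the differential has the block form
\[
dF_{((\tau_1,\dots,\tau_d),q_0)} = \begin{pmatrix} 0 & \mathrm{Id} \\ dg_{(\tau_1,\dots,\tau_d)} & d\phi_{q_0} \end{pmatrix},
\]
where $\phi := e^{\tau_dX_d}\circ\dots\circ e^{\tau_1X_1}$ is the diffeomorphism $g(\tau_1,\dots,\tau_d)$. By hypothesis $dg_{(\tau_1,\dots,\tau_d)}$ is invertible, so after swapping the two column blocks the matrix becomes block lower-triangular with invertible diagonal blocks, hence invertible. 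The inverse function theorem then produces open neighborhoods $\mc V\subset\R^d\times M$ of $((\tau_1,\dots,\tau_d),q_0)$ and $\mc W\subset M\times M$ of $(q_0,q_1)$ such that $F|_{\mc V}\colon\mc V\to \mc W$ is a diffeomorphism, with $\mc V$ (and hence $\mc W$) as small as desired.

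The second assertion follows almost immediately. Shrink $\mc V$ to a product $N\times\Omega_0'$, then pick neighborhoods $\Omega_0\subset\Omega_0'$ of $q_0$ and $\Omega_1$ of $q_1$ small enough that $\Omega_0\times\Omega_1\subset\mc W$. For any $(p_0,p_1)\in\Omega_0\times\Omega_1$, the unique $F$-preimage $(t,p)\in N\times\Omega_0'$ satisfies $p=p_0$ (from the first component of $F$) and $e^{t_dX_d}\circ\dots\circ e^{t_1X_1}(p_0)=p_1$ (from the second), which is precisely the statement. For the first assertion, specialize to $p_1=q_1$: for each $p_0\in\Omega_0$ there is a unique $t\in N$ with $p_0 = e^{-t_1X_1}\circ\dots\circ e^{-t_dX_d}(q_1) = h(t_1,\dots,t_d)$, so $h$ is a smooth bijection from an open neighborhood of $(\tau_1,\dots,\tau_d)$ in $N$ onto $\Omega_0$, with smooth inverse inherited from $F^{-1}$. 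This is exactly the local diffeomorphism property of $h$.

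There is no real obstacle; the only point requiring some care is the passage from the neighborhoods produced by the inverse function theorem (which a priori are not of product form) to the product neighborhoods $N\times\Omega_0'$ and $\Omega_0\times\Omega_1$ appearing in the conclusion. This is a standard shrinking argument exploiting that $(q_0,q_1)$ has a product basis of neighborhoods in $M\times M$, and it is what ensures that $N$ can be taken arbitrarily small, as the statement requires.
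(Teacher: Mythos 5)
Your proof is correct, and it takes a genuinely different route from the one in the paper. The paper proves the first assertion (that $h$ is a local diffeomorphism at $(\tau_1,\dots,\tau_d)$) by an explicit differential computation: it differentiates $(r_1,\dots,r_d)\mapsto e^{-(\tau_1+r_1)X_1}\circ\dots\circ e^{-(\tau_d+r_d)X_d}(q_1)$ at the origin, identifies the resulting vectors at $q_0$, and observes that they are carried by the push-forward $(e^{\tau_d X_d}\circ\dots\circ e^{\tau_1 X_1})_*$ onto a basis of $T_{q_1}M$ coming from the hypothesis on $g$; the second assertion is then dispatched with a one-line appeal to ``a standard compactness/continuity argument.'' You instead lift the problem to the joint map $F(t,p)=(p,e^{t_dX_d}\circ\dots\circ e^{t_1X_1}(p))$, verify invertibility of $dF$ at $((\tau_1,\dots,\tau_d),q_0)$ from the block-triangular structure, and then read off \emph{both} conclusions from the inverse function theorem. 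The trade-off is clear: the paper's computation is elementary and makes the role of the push-forward identities transparent (and reuses the identity \eqref{eq:relation} established in Proposition~\ref{prop:notempty}), but it leaves the ``in particular'' part implicit. Your approach is cleaner in that it proves both claims at once and makes the two-point reachability statement a direct corollary of the diffeomorphism property of $F$, rather than something to be patched on afterward; the price is a slightly more abstract setup. One small point worth making explicit in the final step: from $h\circ\alpha=\mathrm{id}_{\Omega_0}$ (where $\alpha:p_0\mapsto t$ is the smooth map you extract from $F^{-1}$), the chain rule gives $dh_{(\tau_1,\dots,\tau_d)}\circ d\alpha_{q_0}=\mathrm{id}$, and since both tangent spaces have dimension $d$ this already forces $dh_{(\tau_1,\dots,\tau_d)}$ to be invertible, without needing to first argue that the image of $\alpha$ is open.
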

	
	\begin{proof}
		By assumption (compare with \eqref{eq:family}), the vector fields $X_1,\dots, X_d$ appearing in the definition of $g$ satisfy the relation
		\be\label{eq:relation}
		\bigwedge_{i=1}^d \left( (e^{\tau_d X_d}\circ\dots\circ e^{\tau_{i+1} X_{i+1}})_* X_i \right)(q_1)\neq 0.
		\ee
		On the other hand, differentiating at $0$ with respect to $r_i$, $i=1,\dots,d$, the map
		\[
		(r_1,\dots,r_d)\mapsto   e^{-(\tau_1+r_1) X_1}\circ \dots\circ e^{-(\tau_d+r_d)X_d}(q_1),
		\]
		we obtain
		\begin{align}
		\frac{d}{dr_i}e^{-\tau_1X_1}  \circ\dots\circ e^{-(\tau_i+r_i)X_i} \circ\dots\circ  e^{-\tau_dX_d} (q_1)\bigg|_{r_i=0}&=-((e^{-\tau_1X_1}\circ\dots\circ e^{-\tau_{i-1}X_{i-1}})_*X_i)(q_0)\nonumber\\
		&=-((e^{-\tau_1X_1}\circ\dots\circ e^{-\tau_{i}X_{i}})_*X_i)(q_0).\label{eq:relation2}
		\end{align}
		To see that the collection of vectors obtained in \eqref{eq:relation2} is a basis of $T_{q_0}M$, it is sufficient to observe that they can be mapped to the collection appearing in \eqref{eq:relation} applying the push-forward $(e^{\tau_dX_d}\circ\dots\circ e^{\tau_1X_1})_*$. Since this latter family is a basis of $T_{q_1}M$, then $h$ is a local diffeomorphism at $(\tau_1,\dots,\tau_d)$.
		
		The last part of the statement follows by a standard compactness/continuity argument. 
	\end{proof}

	Combining Remark~\ref{rmk:bestkren} and Proposition~\ref{prop:ReverseKren}  
	yields the following. 
	\begin{corollary}\label{cor:revers}
	Assume that \eqref{eq:mainhyp} holds true.	Then, for every $T>d\tau$ and $q_0\in M$, 
	there exist $u\in \mc{U}^\tau$ with $T(u)<T$ and a neighborhood $\Omega$ of $q_0$ in $M$
	such that for every $q_1\in \Omega$ there exists $w\in   \mc{U}^\tau$ with $T(w)<T$ and
	$\phi(T(w),q_1,w)=\phi(T(u),q_0,u)$. 
		\end{corollary}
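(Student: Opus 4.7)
The strategy is to first produce a canonical target point $q^*$ reachable from $q_0$ in time less than $T$ by a dwell-time control, and then use the local surjectivity of the composite flow map (Proposition~\ref{prop:ReverseKren}) to reach this same $q^*$ from any nearby $q_1$, again in time less than $T$.

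Concretely, I would first choose $\delta>0$ small enough to ensure $d(\tau+\delta)<T$ (for example $\delta=(T-d\tau)/(2d)$). By Remark~\ref{rmk:bestkren} applied with this $\delta$, I obtain times $\tau_1,\dots,\tau_d\in(\tau,\tau+\delta)$ and vector fields $X_1,\dots,X_d\in\mc{F}$ such that the map
\[
g:(t_1,\dots,t_d)\mapsto e^{t_dX_d}\circ\dots\circ e^{t_1X_1}(q_0)
\]
is a local diffeomorphism at $(\tau_1,\dots,\tau_d)$. Let $u\in\mc{U}^\tau$ be the concatenation of the constant controls of durations $\tau_1,\dots,\tau_d$ that generate the flows of $X_1,\dots,X_d$, respectively; then $T(u)=\tau_1+\dots+\tau_d<d(\tau+\delta)<T$ and the point $q^{*}:=\phi(T(u),q_0,u)=g(\tau_1,\dots,\tau_d)$ is well-defined.

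Next I apply Proposition~\ref{prop:ReverseKren} to $g$ at the point $q^{*}$. It yields an arbitrarily small neighborhood $N$ of $(\tau_1,\dots,\tau_d)$ in $\R^d$ and neighborhoods $\Omega_0$ of $q_0$ and $\Omega_1$ of $q^{*}$ such that for every $p_0\in\Omega_0$ and $p_1\in\Omega_1$ there exists $(t_1,\dots,t_d)\in N$ with $p_1=e^{t_dX_d}\circ\dots\circ e^{t_1X_1}(p_0)$. Since $N$ can be shrunk freely, I choose it contained in $(\tau,\tau+\delta)^d$. Setting $\Omega:=\Omega_0$, for any $q_1\in\Omega$ I take $p_0=q_1$ and $p_1=q^{*}\in\Omega_1$ in the above property, obtaining times $(t_1,\dots,t_d)\in N$ with $q^{*}=e^{t_dX_d}\circ\dots\circ e^{t_1X_1}(q_1)$. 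Let $w\in\mc{U}^\tau$ be the concatenation of the constant controls of durations $t_1,\dots,t_d$ generating $X_1,\dots,X_d$; then $T(w)=t_1+\dots+t_d<d(\tau+\delta)<T$ and $\phi(T(w),q_1,w)=q^{*}=\phi(T(u),q_0,u)$, as required.

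There is no genuine obstacle here beyond careful bookkeeping: the two main ingredients are already in place, and the only points to watch are (i) making the initial choice of $\delta$ compatible with the global time budget $T$, and (ii) shrinking $N$ enough so that the times $t_i$ produced by the local inverse automatically satisfy the dwell-time bound $t_i\ge\tau$, which guarantees $w\in\mc{U}^\tau$.
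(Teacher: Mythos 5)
Your proof is correct and follows essentially the same approach the paper intends: the paper's ``proof'' is simply the sentence that the corollary follows by combining Remark~\ref{rmk:bestkren} and Proposition~\ref{prop:ReverseKren}. You have spelled out exactly the bookkeeping that combination requires, namely choosing $\delta$ against the time budget $T$ and shrinking $N$ to enforce the dwell-time bound $t_i>\tau$.
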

	
	\subsection{Control sets} 	Let us denote by $\mc{U}^\tau_\infty$ the set of all piecewise constant functions $u:[0,+\infty)\to U$  
	such that there exists $T_n\rightarrow +\infty$ for which  $u|_{[0,T_n]}\in \mc{U}^\tau$ for every $n\in \N$.
	With a slight abuse of notation, given $u:[0,T]\to U$ (respectively, $u:[0,+\infty)\to U$) and $0\le t_1\le t_2\le T$ (respectively, $0\le t_1<+\infty$), we say that $u|_{[t_1,t_2]}$ is in  $\mc{U}^\tau$ (respectively, $u|_{[t_1,+\infty)}$ is in  $\mc{U}^\tau_\infty$) if $[0,t_2-t_1]\ni s\mapsto u(s+t_1)$ (respectively, $[0,+\infty)\ni s\mapsto u(s+t_1)$) does. 
	
A special role in what follows is played by those times in $[0,+\infty)$ at which  
 a 
signal
$u\in  \mc{U}^\tau_\infty$ is split into two 
subsignals which both satisfy  the dwell-time condition. Such times are interesting because they can be used to modify $u$ by concatenating one of the subsignals with any other dwell-time signal. Such partial concatenability allows to prove the interesting properties that we seek to bring to light in control sets.

	\begin{defi}\label{defi:DTCS}
		A 
		subset $D\subset M$ is said to be a \emph{$\tau$ dwell-time control set} ($\tau$-CS, for short) for system~\eqref{eq:controlsys} if it satisfies the following conditions:
		\begin{itemize}
			\item [(i)] For every $q\in D$ there exists $u\in \mc{U}^\tau_\infty$ such that $\phi(t,q,u)\in D$ for all $t>0$ for which both $u|_{[0,t]}$ is in $\mc{U}^\tau$ and $u|_{[t,+\infty)}$ is in $\mc{U}^\tau_\infty$;
			\item [(ii)] For every $q\in D$, one has that $D\subset \ov{\Sft(q)}$;
			\item [(iii)] $D$ is maximal with these properties, i.e., if $D'\supset D$ satisfies both (i) and (ii), then $D'=D$.
		\end{itemize}
		Moreover, a $\tau$-CS $D\subset M$ is said to be an \emph{invariant $\tau$-CS} ($\tau$-ICS, for short) if $\ov{D}=\ov{\Sft(q)}$ for every $q\in D$.
	\end{defi}
	
	\begin{remark}\label{rem:closedinvset}
		Observe that, if $D$ is a subset of $M$ such that 
\begin{equation}\label{eq:tICSclosed}
D=\ov{\Sft(q)}\qquad \forall q\in D,
\end{equation}
 then automatically $D$ is a $\tau$-ICS.
	Moreover, if 	\eqref{eq:mainhyp}  holds true, then any $\tau$-ICS is closed.
	Indeed, 
	consider a $\tau$-ICS $C$ and let us prove that $D=\ov{C}$ satisfies \eqref{eq:tICSclosed}. The conclusion then follows by maximality. 
It is clear that for every $q\in \ov{C}$, the set $\Sft(q)$ (and hence $\ov{\Sft(q)}$) is contained in $\ov{C}$. 
		In order to conclude, fix $q\in \ov{C}$ and  let us notice that it is enough to show that $\Sft(q)$ intersects $C$, since for every $p\in \Sft(q)\cap C$ we have $\ov{C}=\ov{\Sft(p)}\subset \ov{\Sft(q)}$. 
		Thanks to Remark~\ref{rmk:bestkren} and Proposition~\ref{prop:ReverseKren}, there 
		exists $u\in \mc{U}^\tau$ such that $\Sft(q)$ covers a neighborhood of 
		$\phi(T(u),q,u)$. Since, moreover, $\phi(T(u),q,u)$ is in $\ov{C}$, we indeed have that $\Sft(q)$ intersects $C$. 
	\end{remark}
	
	\begin{remark}\label{rem:countable}
		Observe that 
		if \eqref{eq:mainhyp} holds true then $M$ contains at most countably many distinct $\tau$-ICS. Indeed, $M$ admits a countable dense subset $\{q_n\}_{n\in\N}$. Let $D$ be any $\tau$-ICS. Then $D$ is characterized by \eqref{eq:tICSclosed} according to Remark~\ref{rem:closedinvset}. 
				By Proposition~\ref{prop:notempty}, $\mathrm{int}(D)\neq \emptyset$, hence there exists $N\in\N$ such that $q_N$ is in $\mathrm{int}(D)$. But then $D=\ov{\Sft(q_N)}$. The family of all $\tau$-ICS is therefore contained in 
		$\{\ov{\Sft(q_N)}\mid N\in\N\}$.
	\end{remark}

Our first result on control sets is an existence property for invariant dwell-time control sets when $M$ is compact.

	\begin{theorem}\label{thm:existence}
		Let $M$ be compact. For each $q\in M$ there exists a nonempty $\tau$-ICS $D_q$ contained in $\ov{ \Sft(q) }$.
		If, moreover, \eqref{eq:mainhyp} holds true, then $D_q$ has nonempty interior. 
	\end{theorem}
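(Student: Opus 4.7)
\textbf{Plan of proof for Theorem~\ref{thm:existence}.}

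The plan is a standard Zorn-type argument, carefully tuned to the semigroup $\Sft$. Fix $q\in M$ and consider the family
\[
\mathcal{A}_q=\{\ov{\Sft(p)}\mid p\in \ov{\Sft(q)}\},
\]
partially ordered by reverse inclusion (so that a \emph{minimal} element is the smallest set). Every member is nonempty and, by compactness of $M$, closed and compact. The strategy is: (1) show $\mathcal{A}_q$ has a minimal element via Zorn's lemma, (2) show that any such minimal element is a $\tau$-ICS using Remark~\ref{rem:closedinvset}, (3) derive nonempty interior from Proposition~\ref{prop:notempty}.

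The key closure property, used throughout, is: if $p\in \ov{\Sft(p')}$ then $\ov{\Sft(p)}\subset \ov{\Sft(p')}$. Indeed, writing $p=\lim_n g_n(p')$ with $g_n\in \Sft$, for any $g\in \Sft$ continuity of the diffeomorphism $g$ gives $g(p)=\lim_n (g\circ g_n)(p')$, and $g\circ g_n\in \Sft$ by the semigroup property; hence $g(p)\in \ov{\Sft(p')}$, and closing up yields the claim.

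To apply Zorn, take a chain $\{\ov{\Sft(p_\alpha)}\}_\alpha$ in $\mathcal{A}_q$. By the finite intersection property on the compact set $M$, the intersection $I=\bigcap_\alpha \ov{\Sft(p_\alpha)}$ is nonempty. Pick any $p^*\in I$. For every $\alpha$, $p^*\in \ov{\Sft(p_\alpha)}$, so by the key property $\ov{\Sft(p^*)}\subset \ov{\Sft(p_\alpha)}$. Hence $\ov{\Sft(p^*)}\in\mathcal{A}_q$ (note $p^*\in \ov{\Sft(q)}$, since every $p_\alpha$ lies there) and serves as an upper bound for the chain in the reverse-inclusion order. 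Zorn's lemma produces a minimal element $D_q=\ov{\Sft(p^*)}$.

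I now verify that $D_q$ satisfies \eqref{eq:tICSclosed}, which by Remark~\ref{rem:closedinvset} makes it a $\tau$-ICS. For every $p\in D_q=\ov{\Sft(p^*)}$, the key property gives $\ov{\Sft(p)}\subset \ov{\Sft(p^*)}=D_q$. On the other hand, $p\in \ov{\Sft(q)}$ (because $D_q\subset \ov{\Sft(q)}$), so $\ov{\Sft(p)}\in\mathcal{A}_q$. Minimality of $D_q$ in $\mathcal{A}_q$ then forces $\ov{\Sft(p)}=D_q$. Thus $D_q$ is a $\tau$-ICS contained in $\ov{\Sft(q)}$. Finally, if \eqref{eq:mainhyp} holds, Proposition~\ref{prop:notempty} yields $\mathrm{int}(\Att[p^*]{T})\neq\emptyset$ for every $T>d\tau$; since $\Att[p^*]{T}\subset \Sft(p^*)\subset D_q$, we obtain $\mathrm{int}(D_q)\neq\emptyset$.

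I do not expect a serious obstacle: the compactness of $M$ handles the chain bounds cleanly, and the semigroup/continuity argument gives the crucial containment $\ov{\Sft(p)}\subset \ov{\Sft(p')}$ with no effort. The only point that needs a little care is verifying that the upper bound one constructs for a chain is really in $\mathcal{A}_q$ (i.e., built from a point of $\ov{\Sft(q)}$), which is automatic since the chain sits inside $\ov{\Sft(q)}$.
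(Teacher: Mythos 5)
Your proof is correct and follows essentially the same route as the paper: the same family $\{\ov{\Sft(p)}\mid p\in\ov{\Sft(q)}\}$, the same key invariance property $\ov{\Sft(p)}\subset\ov{\Sft(p')}$ when $p\in\ov{\Sft(p')}$, the same chain-intersection argument (the paper phrases it via Cantor's intersection theorem and the dual form of Zorn's lemma, you phrase it via the finite intersection property and Zorn on the reverse-inclusion order, which are the same mechanism), and the same appeal to Remark~\ref{rem:closedinvset} and Proposition~\ref{prop:notempty}. No gaps.
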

	
	\begin{proof}
		Let $q\in M$. Consider the collection $\mathfrak{V}_q=\{ \ov{ \Sft(p) }\mid p\in \ov{ \Sft(q) } \}$. Then $\mathfrak{V}_q$ is nonempty, and all  elements of $\mathfrak{V}_q$ are dwell-time positively invariant, 
	since 
		\begin{equation}\label{e:new}
		\Sft(z)\subset\ov{ \Sft(p) } \qquad \forall z\in\ov{ \Sft(p) }.
		\end{equation}
		To check this, we let $w=g(z)$, for some $g\in \Sft$. We need to prove that $w\in\ov{ \Sft(p) }$. 
		Let $O_w$ be any neighborhood of $w$. Since $z\in \ov{ \Sft(p) }$, then, for any neighborhood $V_z$ of $z$, $V_z\cap \Sft(p)\neq\emptyset$. We choose $V_z$ so small that $g(V_z)\subset O_w$, and then for any $y\in V_z\cap \Sft(p)$, we have that $g(y)\in O_w\cap \Sft(p)$, since $g(\Sft(p))\subset \Sft(p)$.
		
		Now observe that $\mathfrak{V}_q$ is a partially ordered (with respect to the inclusion) collection of nonempty compact sets. The Cantor intersection theorem implies then that every descending chain 
		$\{ \ov{\Sft(q_i)}\mid i\in I \}$ is such that $\bigcap_{i\in I} \ov{\Sft(q_i)}\neq \emptyset$. Therefore, 
		thanks again to \eqref{e:new}, 
		every chain has a lower bound of the form $\ov{\Sft(p)}$, for some $p\in\bigcap_{i\in I}\ov{\Sft(q_i)}$. 
		
		The dual form of Zorn's lemma applies, and yields that $\mathfrak{V}_q$ has a minimal element $D_q$ of the form $\ov{\Sft(p)}$, for some $p\in\ov{\Sft(q)}$. 
		In particular,  $D_q$ is dwell-time positively invariant.
		 By Remark~\ref{rem:closedinvset}, in order to prove that $D_q$ is a $\tau$-ICS it remains to show that $D_q=\ov{\Sft(y)}$ for all $y\in D_q$. Since $D_q$ is closed and dwell-time positively invariant, then $\ov{\Sft(y)}\subset D_q$. Equality then follows from the minimality property of $D_q$. 
		
		Finally, Proposition~\ref{prop:notempty} implies that $D_q$ has nonempty interior whenever \eqref{eq:mainhyp} holds true. 
	\end{proof}

	\subsubsection{Properties of  dwell-time control sets}
	
	\begin{lemma}\label{lemma:nonexit}
		Let $D$ be a 
		subset of $M$ which is maximal with respect to property {\rm (ii)} in Definition~\ref{defi:DTCS}.
		Let $q\in D$ and $u\in\mc{U}^\tau$ be such that $\phi(T(u),q,u)\in D$. Then $\phi(t,q,u)\in D$ for all $0\leq t\leq T(u)$ such that both $u|_{[0,t]}\in\mc{U}^\tau$ and $u|_{[t,T(u)]}\in\mc{U}^\tau$.
	\end{lemma}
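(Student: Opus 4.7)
The plan is to invoke the maximality property: setting $p:=\phi(t,q,u)$, I will show that $D':=D\cup\{p\}$ still satisfies property (ii) of Definition~\ref{defi:DTCS}, which by maximality forces $D'=D$ and thus $p\in D$. The main input is to split $u$ at time $t$ into $v:=u|_{[0,t]}$ and $w:=u|_{[t,T(u)]}$. By hypothesis both $v$ and $w$ lie in $\mc{U}^\tau$, so the associated end-time flows
\[
\Phi_v:=\phi(T(v),\cdot,v),\qquad \Phi_w:=\phi(T(w),\cdot,w)
\]
both belong to $\Sft$. This gives the two structural identities $p=\Phi_v(q)$ and $r:=\phi(T(u),q,u)=\Phi_w(p)\in D$, which will drive the whole argument.

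To verify property (ii) for $D'$, I must check $D'\subset\overline{\Sft(q')}$ for every $q'\in D'$. If $q'\in D$, then $D\subset\overline{\Sft(q')}$ is given, and it only remains to show $p\in\overline{\Sft(q')}$. Since $q\in D$, I approximate $q$ by $g_n(q')$ with $g_n\in\Sft$; by continuity of $\Phi_v$ and the semigroup property, $(\Phi_v\circ g_n)(q')\to \Phi_v(q)=p$ with $\Phi_v\circ g_n\in\Sft$, giving $p\in\overline{\Sft(q')}$.

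The more delicate case is $q'=p$. For the inclusion $D\subset\overline{\Sft(p)}$, given $q''\in D$, I use $r\in D$ together with property (ii) for $D$ to find $h_n\in\Sft$ with $h_n(r)\to q''$; since $r=\Phi_w(p)$, I can rewrite $h_n(r)=(h_n\circ\Phi_w)(p)\in\Sft(p)$, concluding $q''\in\overline{\Sft(p)}$. To close the argument I also need $p\in\overline{\Sft(p)}$: starting again from $q\in D\subset\overline{\Sft(r)}$ I take $g_n\in\Sft$ with $g_n(r)\to q$, and then by continuity $(\Phi_v\circ g_n\circ\Phi_w)(p)=\Phi_v(g_n(r))\to\Phi_v(q)=p$, while $\Phi_v\circ g_n\circ\Phi_w\in\Sft$ by the semigroup property.

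The main obstacle is precisely this last step, $p\in\overline{\Sft(p)}$. It is the only point where the two-sided dwell-time admissibility of $u$ at time $t$ is essential: without both $v\in\mc{U}^\tau$ and $w\in\mc{U}^\tau$ the sandwich $\Phi_v\circ g_n\circ\Phi_w$ would not lie in $\Sft$, and there would be no reason for the returning trajectory to be realized by an admissible dwell-time signal. Once (ii) is established for $D'$, maximality of $D$ with respect to (ii) yields $D'=D$, and hence $p\in D$, as required.
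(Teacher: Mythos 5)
Your proposal takes essentially the same route as the paper: reduce the lemma to showing that $D\cup\{\phi(t,q,u)\}$ still satisfies property~(ii), then invoke maximality. Your two inclusions $p\in\ov{\Sft(q')}$ (for $q'\in D$) and $D\subset\ov{\Sft(p)}$ are verbatim the paper's two assertions, rephrased with sequences and flows in $\Sft$ instead of neighborhoods and controls in $\mc{U}^\tau$. The only difference is that you explicitly check $p\in\ov{\Sft(p)}$ — a condition that property~(ii) for $D'$ genuinely requires (since the identity is not in $\Sft$ when $\tau>0$) and that the paper leaves implicit; it does follow from the two assertions the paper proves, by composing the approximations, but your sandwich $\Phi_v\circ g_n\circ\Phi_w$ settles it directly and cleanly. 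This is a small completeness gain over the paper's exposition, not a different method.
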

	
	\begin{proof}
		Let us write $T$ for $T(u)$. It suffices to prove that for every $0\leq t\leq T$ such that both $u|_{[0,t]}\in\mc{U}^\tau$ and $u|_{[t,T]}\in\mc{U}^\tau$, and every $p\in D$, both $p\in\ov{ \Sft( \phi(t,q,u) ) }$ and $\phi(t,q,u)\in\ov{ \Sft(p) }$. Indeed if this were true and $\phi(t,q,u)\not\in D$, then $D\cup \{\phi(t,q,u)\} $ would be a set strictly larger than $D$ that satisfies 
		(ii) in Definition~\ref{defi:DTCS}.

		To check the second assertion, fix a neighborhood $V$ of $\phi(t,q,u)$. Then there exists a neighborhood $U$ of $q$ such that $\phi(t,z,u)$ in $V$ for all $z\in U$. Since $q\in D$, for every $p\in D$ we have that $q\in\ov{ \Sft(p) }$, and then there exists $u_1\in\mc{U}^\tau$ with $\phi(T(u_1),p,u_1)\in U$. We concatenate $u_1$ and $u|_{[0,t]}$ on $[0,T(u_1)+t]$ to see that $u_1*u\in\mc{U}^\tau$ and $\phi(T(u_1)+t,p,u_1*u)\in V$, so we are done. The other assertion is easier, for it suffices to observe that $p\in\ov{ \Sft
(\phi(T,q,u)) }\subset \ov{ 
\Sft(\phi(t,q,u)) }$, as $u|_{[t,T]}\in\mc{U}^\tau$ by assumption.
	\end{proof}
	
	\begin{lemma}\label{lemma:infinite}
		Let $D$ be a subset of $M$ that is maximal with respect to property 
		{\rm (ii)} in Definition~\ref{defi:DTCS}. Assume that $\mathrm{int}(D)\neq \emptyset$. Then $D$ is a $\tau$-CS.
	\end{lemma}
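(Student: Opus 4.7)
\emph{Plan.} The three clauses of Definition~\ref{defi:DTCS} must be verified. Clause (ii) is the hypothesis, and (iii) is immediate: any $D'\supsetneq D$ satisfying both (i) and (ii) would in particular satisfy (ii), contradicting the maximality of $D$ with respect to (ii). The substance of the proof is clause (i).

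Fix $q\in D$, and use $\mathrm{int}(D)\ne\emptyset$ to pick $p\in\mathrm{int}(D)$. I plan to build $u\in\mc{U}^\tau_\infty$ inductively so that the trajectory $\phi(\cdot,q,u)$ hits $\mathrm{int}(D)$ along a diverging sequence of times. Set $q_0:=q$; given $q_{i-1}\in D$, clause (ii) yields $D\subset\ov{\Sft(q_{i-1})}$, so $\Sft(q_{i-1})$ is dense in the open set $\mathrm{int}(D)$ and we may choose a \emph{nontrivial} $u_i\in\mc{U}^\tau$ with $q_i:=\phi(T(u_i),q_{i-1},u_i)\in\mathrm{int}(D)$. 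For $\tau>0$ any such nontrivial choice satisfies $T(u_i)\ge\tau$, so $T_n:=\sum_{i\le n}T(u_i)\to+\infty$ and $u:=u_1*u_2*\cdots$ belongs to $\mc{U}^\tau_\infty$.

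Given a splittable $t>0$ of $u$, pick $n$ with $T_{n-1}\le t\le T_n$, and set $w:=u_1*\cdots*u_n\in\mc{U}^\tau$; note that $\phi(t,q,u)=\phi(t,q,w)$ and $\phi(T_n,q,w)=q_n\in D$. I want to apply Lemma~\ref{lemma:nonexit} at $t$, which requires both $w|_{[0,t]}\in\mc{U}^\tau$ and $w|_{[t,T_n]}\in\mc{U}^\tau$. The first restriction coincides with $u|_{[0,t]}\in\mc{U}^\tau$ (given). The second equals $u_n|_{[t-T_{n-1},T(u_n)]}$: its interior and final constancy intervals are those of $u_n$ and thus of length $\ge\tau$, while its initial constancy interval has length equal to the distance from $t$ to the first switching time of $u$ after $t$, and the hypothesis $u|_{[t,+\infty)}\in\mc{U}^\tau_\infty$ forces this distance to be $\ge\tau$. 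Hence $w|_{[t,T_n]}\in\mc{U}^\tau$, and Lemma~\ref{lemma:nonexit} gives $\phi(t,q,u)\in D$, proving (i).

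The delicate point is the verification that $u_n|_{[t-T_{n-1},T(u_n)]}\in\mc{U}^\tau$; this is where the splittability condition on $u$ in the $\mc{U}^\tau_\infty$ sense must be translated into quantitative information on the nearest switching times of $u$ around $t$. Once it is handled, the rest of the argument assembles directly.
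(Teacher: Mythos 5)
Your reduction to clause (i) is correct, and the overall strategy — build a dwell-time signal by iterating property (ii) so the trajectory keeps returning to $\mathrm{int}(D)$, then invoke Lemma~\ref{lemma:nonexit} at a suitable cut-off time — is essentially the paper's. However, both of the two substantive steps have gaps.

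First, the divergence of $T_n=\sum_{i\le n}T(u_i)$ is only justified for $\tau>0$. When $\tau=0$ (a case the lemma must cover), a ``nontrivial'' $u_i$ only gives $T(u_i)>0$, and $\sum_i T(u_i)$ may converge, in which case $u_1*u_2*\cdots$ is not defined on all of $[0,+\infty)$ and does not belong to $\mc{U}^\tau_\infty$. The paper avoids this by fixing two \emph{disjoint} nonempty open sets $V,W\subset\mathrm{int}(D)$ with $W=\phi(T(u_1),V,u_1)$ and alternating the trajectory between them, using the \emph{same} control $u_1$ on every odd step; then $\sum T(u_i)\ge\sum_i T(u_1)=\infty$ because $T(u_1)>0$ is a fixed quantity, not one that may shrink with $i$.

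Second, and more importantly, the verification that $w|_{[t,T_n]}\in\mc{U}^\tau$ is incorrect. Let $s^*$ be the first discontinuity of $u$ strictly after $t$. You argue that the initial constancy interval of $u_n|_{[t-T_{n-1},T(u_n)]}$ has length $s^*-t\ge\tau$; but the restriction is to $[t,T_n]$, so its initial constancy interval actually has length $\min(s^*,T_n)-t$. If the last piece of $u_n$ and the first piece of $u_{n+1}$ carry the same control value, then $s^*>T_n$, and the restriction $w|_{[t,T_n]}$ consists of a \emph{single} constant piece of length $T_n-t$, which can be strictly smaller than $\tau$ (take the last constancy interval of $u_n$ of length $\ell\ge 2\tau$, and $t\in(T_n-\tau,T_n)$; the two splittability conditions $u|_{[0,t]}\in\mc{U}^\tau$ and $u|_{[t,\infty)}\in\mc{U}^\tau_\infty$ are then compatible with this choice). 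In that situation you cannot invoke Lemma~\ref{lemma:nonexit}. The paper sidesteps this by cutting one full block later: it picks $i$ with $t\le T_i$ and works with $u|_{[t,T_{i+1}]}$, so that $T_{i+1}-t\ge T(u_{i+1})\ge\tau$ guarantees the first (possibly only) constancy piece of the restriction has length $\ge\tau$, while the final piece inherits the $\ge\tau$ bound from $u_{i+1}\in\mc{U}^\tau$. You should replace your $T_n$ by $T_{n+1}$ and verify the dwell-time bound on the first piece via $T_{n+1}-t\ge T(u_{n+1})\ge\tau$ (needed exactly when $s^*>T_{n+1}$).
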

	
	\begin{proof}
		It is sufficient to show that, for every $q\in D$, there exists $u\in\mc{U}^\tau_\infty$ such that
		$\phi(t,u,q)\in D$ for all $t\ge 0$ such that $u|_{[0,t]}$ is in $\mc{U}^\tau$ and $u|_{[t,+\infty)}$ in $\mc{U}^\tau_\infty$. 
		
		Pick $q\in D$ and fix two nonempty open subsets $V,W$ of $\mathrm{int}(D)$ such that $V\cap W=\emptyset$.
		Applying twice property 
		(ii) in Definition~\ref{defi:DTCS}	,  
		there exist $u_0,u_1\in\mc{U}^\tau$,
		$q_1\in V$, and  $q_2\in W$ such that $q_1=\phi(T(u_0),q,u_0)$ and $q_2=\phi(T(u_1),q_1,u_1)$. 
		Up to restricting $V$ and $W$, we can assume that $W=\phi(T(u_1),V,u_1)$.
			Applying iteratively property~(ii) in Definition~\ref{defi:DTCS}, we select a sequence  $\{q_i\}_{i\in \N}$ such that $q_i\in V$ for $i$ odd and $q_i\in W$ for $i$ even, and a sequence $\{u_i\}_{i\in\N}$ in $\mc{U}^\tau$ such that $q_{i+1}=\phi(T(u_i),q_i,u_i)$ for every $i\in\N$ and $u_i=u_1$ for $i$ odd. Notice that $\sum_{i\in\N}T(u_i)\ge \sum_{i\in\N}T(u_{2i+1})=\sum_{i\in\N}T(u_{1})=\infty$. Hence, the concatenation of all $u_i$ is an element of  $\mc{U}^\tau_\infty$, which we denote by $u$.

		Finally, if $t\ge 0$ is such that both $u|_{[0,t]}$ is in $\mc{U}^\tau$ and $u|_{[t,\infty)}$ in $\mc{U}^\tau_\infty$,
		then let $i\in \N$ be such that $t\le T_i$ 	and notice that $u|_{[t,T_{i+1}]}$ is in $\mc{U}^\tau$. Applying Lemma~\ref{lemma:nonexit} 
		with $u=u|_{[0,T_{i+1}]}$, we deduce that 
		$\phi(t,q,u)$ is in $D$. Then $D$ is a $\tau$-CS.
	\end{proof}

	\begin{lemma}\label{lemma:properties}
		Let \eqref{eq:mainhyp} hold true and 	assume that $D\subset M$ is a $\tau$-ICS. Then
		\begin{itemize}
			\item [(i)] $\ov{\mathrm{int}(D)}=D$;
			\item [(ii)] $\mathrm{int}(D)$ is dwell-time positively invariant;
			\item [(iii)] There exists an open and dense subset $C$ of $D$  such that $C\subset \bigcap_{q\in D} \Sft{(q)}$;
			\item [(iv)] There exists an open and dense subset $\mathfrak{C}$ of $D$ such that $\mathfrak{C}=\Sft{(q)}$ for all $q\in \mathfrak{C}$.
			
		\end{itemize}
	\end{lemma}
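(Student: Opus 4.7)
My plan is to address the four items in the order (ii), (i), (iii), (iv): (ii) feeds into (i), and a single construction of $C$ delivers both (iii) and (iv). Throughout I rely on two facts: by Remark~\ref{rem:closedinvset}, $D$ is closed and $D=\overline{\Sft(q)}$ for every $q\in D$, so in particular $\Sft(q)\subset D$; and by Proposition~\ref{prop:notempty} the orbit $\Sft(q)$, hence $D$, has nonempty interior.

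For (ii), I take $q\in\mathrm{int}(D)$ and $g\in\Sft$. Since every element of $\Sft$ is a composition of flows of complete vector fields, $g$ is a diffeomorphism of $M$, and thus sends an open neighborhood $V\subset\mathrm{int}(D)$ of $q$ onto an open set $g(V)\ni g(q)$. Forward invariance of $D$ forces $g(V)\subset D$, so $g(V)\subset\mathrm{int}(D)$ and $g(q)\in\mathrm{int}(D)$. For (i), I fix $p_0\in\mathrm{int}(D)$; for an arbitrary $q\in D=\overline{\Sft(p_0)}$, I choose $g_n\in\Sft$ with $g_n(p_0)\to q$ and apply (ii) to each $g_n(p_0)$, which yields $q\in\overline{\mathrm{int}(D)}$.

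For (iii) and (iv), I start from $q_0\in\mathrm{int}(D)$ and invoke Corollary~\ref{cor:revers} to produce $u\in\mc{U}^\tau$ and a neighborhood $\Omega$ of $q_0$ such that $p_0:=\phi(T(u),q_0,u)$ satisfies $p_0\in\Sft(q_1)$ for every $q_1\in\Omega$. Remark~\ref{rmk:bestkren} applied at $p_0$ then produces an open set $V_0\subset\Sft(p_0)$. For every $q\in D$, combining $D=\overline{\Sft(q)}$ with $q_0\in\mathrm{int}(D)$ yields $g\in\Sft$ with $g(q)\in\Omega$; the semigroup invariance of the forward orbits gives $V_0\subset\Sft(p_0)\subset\Sft(g(q))\subset\Sft(q)$, so $V_0\subset\bigcap_{q\in D}\Sft(q)$. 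I then set $C:=\bigcup_{g\in\Sft}g(V_0)$: this $C$ is open (union of open sets, since each $g$ is a diffeomorphism), contained in $\bigcap_{q\in D}\Sft(q)$ by the semigroup property, and dense in $D$ (for any open $W\subset M$ with $W\cap D\ne\emptyset$, pick $v_0\in V_0$ and use $D=\overline{\Sft(v_0)}$ to find $h\in\Sft$ with $h(v_0)\in W\cap h(V_0)\subset W\cap C$). Taking $\mathfrak{C}=C$ then also handles (iv): the inclusion $C\subset\Sft(q)$ for $q\in C\subset D$ is part of (iii), while for $p=g(v_0)\in C$ and $h\in\Sft$ one has $h(p)=(h\circ g)(v_0)\in C$, so $\Sft(p)\subset C$.

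The only delicate step is the construction in (iii): one has to exhibit a single open set $V_0$ reachable from every $q\in D$, which I obtain by gluing the dwell-time reversibility of Corollary~\ref{cor:revers} near $q_0$ to the full-rank forward Krener step of Remark~\ref{rmk:bestkren} at the terminal point $p_0$. Once $V_0$ is in hand, its $\Sft$-orbit $C$ automatically inherits the topological and dynamical features required by (iv), so no further argument is needed.
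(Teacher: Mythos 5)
Your proof is correct and rests on the same ingredients as the paper's (Remark~\ref{rmk:bestkren}, Proposition~\ref{prop:ReverseKren}/Corollary~\ref{cor:revers}, the semigroup structure of $\Sft$, and the characterization $D=\ov{\Sft(q)}$ from Remark~\ref{rem:closedinvset}), but it reorganizes the argument for (iii)--(iv) in a cleaner way. The paper proves (iii) by showing, for each nonempty open $V\subset D$, that $V\cap\bigcap_{q\in D}\Sft(q)$ has nonempty interior; it then defines $\mathfrak{C}$ in (iv) as the union of \emph{all} sets $C$ satisfying (iii), and checks dwell-time invariance via the auxiliary observation that $C\cup g(C)$ again satisfies (iii). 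You instead produce one explicit open set $V_0\subset\bigcap_{q\in D}\Sft(q)$ (via Corollary~\ref{cor:revers} followed by one more Krener step) and take $\mathfrak{C}=C=\bigcup_{g\in\Sft}g(V_0)$; this set is automatically open, dwell-time positively invariant, contained in $\bigcap_{q\in D}\Sft(q)$ by the semigroup property, and dense thanks to $D=\ov{\Sft(v_0)}$ for $v_0\in V_0$, so a single construction settles (iii) and (iv) simultaneously and sidesteps the paper's ``union of all $C$'' step. Your proof of (i), via (ii) applied to a sequence $g_n(p_0)\to q$ with $p_0\in\mathrm{int}(D)$, is also a valid variant of the paper's direct neighborhood argument. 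The only inessential detail is that you take $q_0\in\mathrm{int}(D)$, whereas any $q_0\in D$ would do, since $\Omega\cap\Sft(q)\neq\emptyset$ already follows from $q_0\in\ov{\Sft(q)}$ and $\Omega$ being a neighborhood of $q_0$ in $M$.
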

	
	\begin{proof} 
		Let us prove (i). Recall that $\mathrm{int}(D)\neq \emptyset$ by Proposition~\ref{prop:notempty}. The  inclusion of $\ov{\mathrm{int}(D)}$ in $D$ is obvious from Remark~\ref{rem:closedinvset}. As for the reverse inclusion, let us choose  $q\in D$ and any neighborhood  $V_q$ of $q$. We need to show that 
		\begin{equation}\label{eq:renew}
		V_q\cap\mathrm{int}(D)\neq \emptyset. 
		\end{equation}
		Using Proposition~\ref{prop:notempty} we first deduce that there exists a nonempty open set $O$ contained in $\Sft(q)$. 
		Since $\Sft(q)\subset D$, we have that $O$ is contained in $\mathrm{int}(D)$. 
		Let $p\in O$. By definition of $\tau$-ICS, $V_q\cap \Sft(p)\neq \emptyset$, and we pick $q'\in V_q\cap\Sft(p)$. Hence $q'=g(p)$, for some $g\in\Sft$.  Since $V_q$ is open, we can find a sufficiently small neighborhood $V_p\subset O$ of $p$ such that $V_{q'}:=g(V_p)$ is a neighborhood of $q'$ entirely contained in $V_q$. Then $V_{q'}$ is a nonempty open set in $\Sft(q)$, in particular it is contained in $\mathrm{int}(D)$, and \eqref{eq:renew} is proved.

		As for (ii) we proceed as follows.  Let $g\in\Sft$ 
		and observe that since $D$ is dwell-time positively invariant, then $g(\mathrm{int}(D))$ is contained in $D$. Since, moreover, $g$ is a diffeomorphism, then $g(\mathrm{int}(D))$ is open and hence contained in $\mathrm{int}(D)$.

		Consider now point (iii). Let $V$ be any nonempty open set in $D$. 
		We should prove that 	$\bigcap_{q\in D} \Sft{(q)}\cap V$ has nonempty interior. 
		Fix some $q_0\in D$. According  to Remark~\ref{rmk:bestkren},
there exist
		$\tau_1,\dots,\tau_d>\tau$ and $X_1,\dots,X_d\in\mc{F}$ such that 
		$(t_1,\dots,t_d)\mapsto e^{t_dX_d}\circ\dots\circ e^{t_1 X_1}(q_0)$ is a local diffeomorphism at $(\tau_1,\dots,\tau_d)$. 
		Notice that
		\[
			q_1:=e^{\tau_dX_d}\circ\dots\circ e^{\tau_1 X_1}(q_0) 
		\]
		is in $D$ 
		and select $k\in \Sft$ such that $k(q_1)\in V$.
		Thanks to Proposition~\ref{prop:ReverseKren},
		there exist $\Omega_0,\Omega_1$, neighborhoods of $q_0,q_1$ respectively, such that for every $p_0\in\Omega_0$ and $p_1\in\Omega_1$ there exists $t_1,\dots,t_d>\tau$ such that
		\[
			p_1=e^{t_dX_d}\circ\dots\circ e^{t_1 X_1}(p_0).
		\]
		Up to reducing $\Omega_0$ and $\Omega_1$, we can assume that
		$k(\Omega_1)\subset V$.
		We now complete the proof of (iii) by showing that $k(\Omega_1)$ is also  contained in $\bigcap_{q\in D} \Sft{(q)}$. 		  
		Indeed, for any $q\in D$ and $p_1\in \Omega_1$ there exist $g\in \Sft$ and $t_1,\dots,t_d>\tau$ such that $g(q)\in \Omega_0$ and
		\[
			p_1=e^{t_d X_d}\circ\dots\circ e^{t_1X_1}\circ g(q).
		\]
		Hence, $k\circ e^{t_d X_d}\circ\dots\circ e^{t_1X_1}\circ g$ is in $\Sft$ and maps 
		$q$ to $k(p_1)$.
		
		Let us finally prove (iv). Let $\mathfrak{C}$ be the union of all sets $C$ as in (iii). 
		Then $\mathfrak{C}$ is dwell-time positively invariant,  
		since for every $C$ as in (iii) and every $g\in \Sft$,  $C\cup g(C)$ is also as in (iii).
		Hence, for $q\in \mathfrak{C}$, $\Sft{(q)}\subset \mathfrak{C}$ which in turn  
		is contained in $\bigcap_{q'\in D} \Sft{(q')}\subset \Sft{(q)}$ because of (iii). We conclude that $\Sft{(q)}= \mathfrak{C}$, as required. 
	\end{proof}
	
Applying the previous results to the case where $M$ is compact, we can strengthen the conclusions of Remark~\ref{rem:countable}	
under the compactness assumption.
	
	\begin{prop}\label{prop:finite}
		Let $M$ be a compact manifold and assume that  \eqref{eq:mainhyp} holds true. Then there exist finitely many distinct $\tau$-ICS on $M$.
	\end{prop}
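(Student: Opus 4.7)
The plan is to argue by contradiction using Corollary~\ref{cor:revers} together with the closedness of $\tau$-ICS ensured by Remark~\ref{rem:closedinvset}. The key geometric fact is that reversibility at every point of a compact manifold forces disjoint invariant sets to ``accumulate'' at a common target point, which contradicts disjointness.

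\smallskip

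\noindent\textbf{Step 1: Distinct $\tau$-ICS are pairwise disjoint.} Let $D_1,D_2$ be two $\tau$-ICS and suppose $q\in D_1\cap D_2$. By Remark~\ref{rem:closedinvset}, under \eqref{eq:mainhyp} every $\tau$-ICS is closed, so from the invariance property in Definition~\ref{defi:DTCS} one has $D_i=\ov{\Sft(q)}$ for $i=1,2$, hence $D_1=D_2$.

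\smallskip

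\noindent\textbf{Step 2: Extract an accumulation point of representatives.} Suppose for contradiction that $M$ contains infinitely many distinct $\tau$-ICS $\{D_n\}_{n\in\N}$ (there are at most countably many, by Remark~\ref{rem:countable}). Pick $q_n\in D_n$. By compactness of $M$, a subsequence $q_{n_k}$ converges to some $q_0\in M$.

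\smallskip

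\noindent\textbf{Step 3: Use reversibility at $q_0$ to land all nearby points at a common point.} Applying Corollary~\ref{cor:revers} (which requires \eqref{eq:mainhyp}) at $q_0$ for any $T>d\tau$, we obtain $u\in\mc{U}^\tau$, an open neighborhood $\Omega$ of $q_0$, and a point $p:=\phi(T(u),q_0,u)$ such that for every $q\in \Omega$ there exists $w\in\mc{U}^\tau$ with $\phi(T(w),q,w)=p$. In particular $p\in\Sft(q)$ for every $q\in\Omega$.

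\smallskip

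\noindent\textbf{Step 4: Derive the contradiction.} For $k$ large enough, $q_{n_k}\in\Omega$, hence $p\in\Sft(q_{n_k})$. Since $D_{n_k}$ is closed and is a $\tau$-ICS, $\Sft(q_{n_k})\subset\ov{\Sft(q_{n_k})}=D_{n_k}$, so $p\in D_{n_k}$ for all sufficiently large $k$. But by Step~1 the $D_{n_k}$ are pairwise disjoint, so $p$ cannot belong to two of them simultaneously. This contradiction shows that only finitely many distinct $\tau$-ICS exist.

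\smallskip

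The main conceptual point (and the only mildly subtle step) is the disjointness of distinct $\tau$-ICS in Step~1; without it the contradiction in Step~4 would collapse. Once this is settled, the rest is a routine compactness-plus-openness argument, leaning on Corollary~\ref{cor:revers} to convert the accumulation of the $q_n$'s into a \emph{common} reachable target $p$.
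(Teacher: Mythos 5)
Your proof is correct, and it takes a genuinely more direct route than the paper's. The paper constructs, for each $D_n$, the open dense set $\mathfrak{C}_n$ from Lemma~\ref{lemma:properties}(iv), picks $q_n\in\mathfrak{C}_n$, extracts a limit $q$, and then invokes Theorem~\ref{thm:existence} to obtain a $\tau$-ICS $D_q\subset\ov{\Sft(q)}$ with its own $\mathfrak{C}_q$; a continuity argument with a fixed $g\in\Sft$ sending a neighborhood of $q$ into $\mathfrak{C}_q$ yields $\mathfrak{C}_q=\Sft(p_n)\subset\Sft(q_n)=\mathfrak{C}_n$ for $n$ large, whence $D_q\subset D_n$ and maximality forces $D_q=D_n$ for infinitely many $n$, contradicting distinctness. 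You instead make explicit the disjointness of distinct $\tau$-ICS under \eqref{eq:mainhyp} (which follows from the closedness given by Remark~\ref{rem:closedinvset} together with $D=\ov{\Sft(q)}$ for every $q\in D$, and which the paper uses only in passing in the uniqueness part of Theorem~\ref{thm:uniq}), and then apply Corollary~\ref{cor:revers} directly at the accumulation point $q_0$: the common target $p$ lies in $\Sft(q_{n_k})\subset\ov{\Sft(q_{n_k})}=D_{n_k}$ for all large $k$, which is impossible for pairwise disjoint sets. Your argument avoids the $\mathfrak{C}$-set machinery of Lemma~\ref{lemma:properties} and Theorem~\ref{thm:existence} entirely, relying only on closedness and the local reverse-surjectivity from Corollary~\ref{cor:revers}; the paper's version is heavier but, as a by-product, exhibits explicitly which $\tau$-ICS (namely $D_q$) the sequence $D_n$ stabilizes to.
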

	
	\begin{proof}
Reasoning by contradiction and	 following Remark~\ref{rem:countable}, let $\{D_n\}_{n\in\N}$ be the countable collection of all distinct $\tau$-ICS on $M$. 
		For each $D_n$, consider the open and dense subset $\mathfrak{C}_n\subset D_n$ constructed as in point (iv) of Lemma~\ref{lemma:properties}. For every $n\in\N$ let us choose an element $q_n\in \mathfrak{C}_n$. Up to selecting a subsequence, still denoted by $\{q_n\}_{n\in\N}$, we may assume that there exists $\lim_{n\to\infty}q_n=:q\in M$.
		
		Using Theorem~\ref{thm:existence}, let us consider a $\tau$-ICS $D_q\subset\ov{\Sft(q)}$ and, accordingly, the open and dense set $\mathfrak{C}_q$ contained in $D_q$. Then there exist $p\in \mathfrak{C}_q$ and $g\in\Sft$ such that $p=g(q)$. Fix $U_p\subset \mathfrak{C}_q$ and $V_q\subset M$ neighborhoods  of $p$ and  $q$ respectively, such that $g(V_q)\subset U_p$. For $n$ large enough, $q_n\in V_q$. Then  
		$p_n:=g(q_n)$ is an element of $\mathfrak{C}_q$ for $n$ large enough.
		
		In particular, for every such $n$, the inclusion
		\be\label{eq:incl}
		\mathfrak{C}_q=\Sft{(p_n)}\subset \Sft{(q_n)}=\mathfrak{C}_n
		\ee
		holds true. 
		Taking the closures on both sides of \eqref{eq:incl}, we deduce that $D_q\subset D_n$. The maximality property of $D_q$ (point (iii) in Definition~\ref{defi:DTCS}) then implies that $D_q=D_n$ for every $n$ large enough, leading to  a contradiction.
	\end{proof}
	
	\begin{example}\label{ex:first}
		Let us consider, on the one-dimensional projective space $\R\mathbb{P}^1$, two vector fields $f_1,f_2$ as in Figure~\ref{fig:first}. 
		
			\begin{figure}[h!]
			\includegraphics[width=11cm]{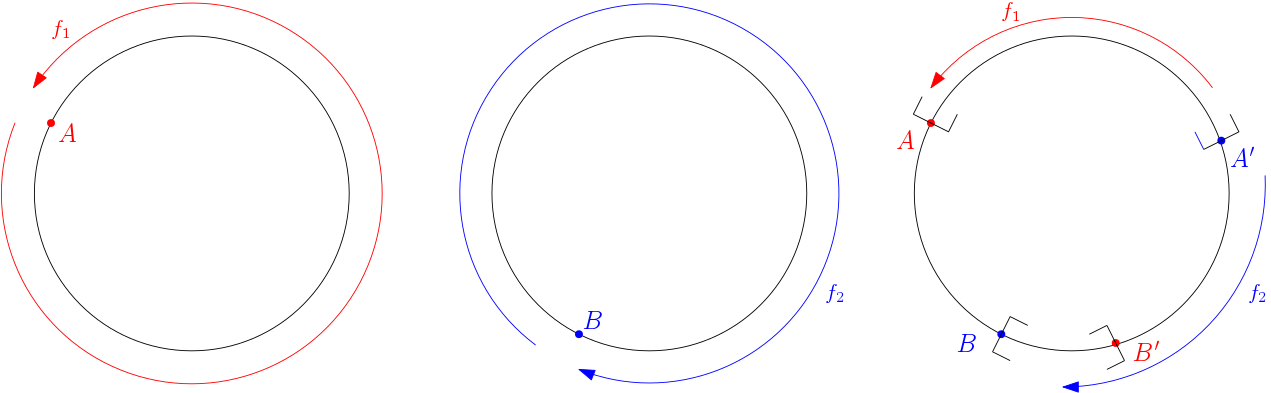}
			\caption{}	
			\label{fig:first}
		\end{figure}
	
		The vector field $f_1$ has a single equilibrium $A$ and its trajectories outside $A$ turn counterclockwise, while  $f_2$ has a single equilibrium $B$ different form $A$ and its trajectories outside $B$ turn clockwise. Each of the vector fields $f_1$ and $f_2$ can be obtained, for instance, by projecting on $\R\mathbb{P}^1$ a linear  dynamics conjugate to the one of the double integrator
		\[\dot x=\begin{pmatrix}0&1\\0&0\end{pmatrix} x,\qquad x\in \R^2.\]
		
		Fix a dwell-time $\tau>0$ and consider the family $\mathcal{F}=\{f_1,f_2\}$. Define the points $A'$ and $B'$ by the relations
		\[
		A'=e^{\tau f_1}(B),\quad B'=e^{\tau f_2}(A).
		\] 
		In what follows, the symbol $\wideparen{XY}$  denotes the closed arc 
		connecting $X$ to $Y$ in the clockwise direction.
 We claim that there exists a unique $\tau$-ICS $D_\tau$ associated with $\mathcal{F}$, given by the union $\wideparen{AA'}\cup \wideparen{B'B}$. In order to check it, first notice that $\wideparen{AA'}$ is positively invariant for $f_1$, $\wideparen{B'B}$ is positively invariant for $f_2$, and 
		\[
			e^{tf_2}(\wideparen{AA'})\subset \wideparen{B'B},\quad e^{tf_1}(\wideparen{B'B})\subset \wideparen{AA'},\qquad \forall t\ge \tau.
		\]
		Let us show now that, for every $B''\in\wideparen{B'B}$, we have $D_\tau
		\subset \ov{\Sft{(B'')}}$. To see this notice that $ e^{t f_1}(B'')\to A$ as $t\to +\infty$.
		Then we can reach from $B''$ any point in the interior of $\wideparen{B'B}$ (and in particular we can get as close as we want to the two boundary points) by taking $t$ large and then applying $f_2$ during a time $t'$  larger than $\tau$. Taking $t'$ large and applying again $f_1$ for a time larger than $\tau$, we can reach any point in the interior of $\wideparen{AA'}$, proving that $\ov{\Sft(B')}$ contains $D_\tau$.
		The symmetric  argument starting from some point $A''$ of $\wideparen{AA'}$ concludes the proof of the fact that  $D_\tau$ 
		is a $\tau$-ICS. The uniqueness of $D_\tau$ follows from the fact that, by dwell-time invariance,  any $\tau$-ICS must contain the global attractive equilibria for $f_1$ and $f_2$, that is, $A$ and $B$.
		
		Notice that $D_\tau$  
		is connected (and actually equal to $D_0=\wideparen{AB}$) for $\tau$ small and not connected for $\tau$ large. Consider the critical situation, that is, the value of $\tau$ for which  $A'= B'$. In this case $D_\tau$ coincides with the arc $\wideparen{AB}$. We claim that, in this case, the set $\mathfrak{C}$ of point (iv) in Lemma~\ref{lemma:properties} is $\mathrm{int}(\wideparen{AB})\setminus\{ A'\}$. Indeed, $A'$ can be reached only by starting either at $A$ or $B$. Since these points are equilibria, they cannot be reached using the semigroup $\Sft$ as soon as we start within $\mathrm{int}(\wideparen{AB})$.
		This example shows, in particular, that $\mathfrak{C}$ may be different from the interior of the corresponding $\tau$-ICS,  
		in contrast with the case $\tau=0$ (see \cite[Theorem 3.1.5]{Coloniusbook}). 
	\end{example}

	\section{Applications to linear switched systems}  \label{s:ALSS}
	
	\subsection{The maximal Lyapunov exponent of a linear switched system}
	 Let $S\subset M_d(\R)$ be a bounded set of matrices, playing the role of the control set $U$ in the previous section, and for $\tau\ge 0$ denote by $\mc{S}^\tau$ and $\mc{S}^\tau_\infty$ the sets of piecewise constant signals with values in $S$ and dwell-time $\tau$, in analogy with the sets $\mc{U}^\tau$ and $\mc{U}^\tau_\infty$ introduced earlier. 
We specify the notation to the set $S$ in order to stress that the vector fields considered here are linear and that they are identified with the corresponding matrices. 
	The associated system is 
	\be\label{eq:lss}\tag{$\Sigma_\tau$}
		\dot{x}(t)=A(t)x(t), \qquad t\geq 0, \quad A(\cdot)\in\mc{S}^\tau_\infty, \quad x\in\R^d.
	\ee
	We call \eqref{eq:lss} a \emph{switched system}, in order to stress that we focus on 
	the uniform asymptotic properties of its dynamics, looking at 
	$A(\cdot)$ as a \emph{switching signal} 
	rather than as a control law. 
	Following the usual switched systems terminology, we refer to each element of $S$ as a \emph{mode} of \eqref{eq:lss}. 

For every  $A\in \mc{S}^\tau_\infty$ 
we denote by 
$\Phi_A(\cdot,\cdot):[0,+\infty)\times [0,+\infty)\to \GL(\R,d)$ the \emph{fundamental matrix} of   \eqref{eq:lss}, that is, the solution to
\[ \frac{d}{dt}\Phi_A(t,t_1)=A(z)\Phi_A(t,t_1),\quad \Phi_A(t_1,t_1)=\mathrm{Id}_d.\]
	It is immediate to see that  $t\mapsto \Phi_A(t,t_1)x_0$ is the unique solution to \eqref{eq:lss} with $x(t_1)=x_0$ and that $\Phi_A(t_1,t)=\Phi_A(t,t_1)^{-1}$.

	The boundedness of the set ${S}$ ensures that the trajectories of system~\eqref{eq:lss} have at most exponential growth with a common upper bound on their growth rates. 
The lower of such upper bounds is the object of  the following definition.
	
	\begin{defi}\label{defi:unifexprate}
		Given $\tau\ge 0$ and a bounded set $S\subset M_d(\R)$, the \emph{uniform exponential rate} $\lambda_\tau(S)$ and the maximal Lyapunov exponent $\wh{\lambda}_\tau(S)$ are defined, respectively, by 
		\be\label{eq:unifexprate}
			\lambda_\tau(S)=\limsup_{t\to+\infty}\sup_{A\in\mc{S}^\tau_\infty}\frac{\log(\|\Phi_A(t,0)\|)}{t},\quad \wh{\lambda}_\tau(S)=\sup_{A\in\mc{S}^\tau_\infty}\limsup_{t\to+\infty}\frac{\log(\|\Phi_A(t,0)\|)}{t}.
		\ee
	\end{defi}
The definition is independent of the choice of the norm $\|\cdot\|$ on $M_d(\R)$, since all norms on a finite-dimensional vector space are equivalent. 

Since \eqref{eq:lss} can be seen as a linear flow on a vector bundle, we have
\begin{equation}\label{two-exponents}
\lambda_\tau(S)=\wh{\lambda}_\tau(S)
\end{equation}
 for every choice of $\tau$ and $S$ \cite{Coloniusbook}.

	\subsubsection{Reduction to an irreducible component} We recall in this section how to reduce the stability analysis of \eqref{eq:lss} to the case in which the set ${S}$ is \emph{irreducible}, i.e., when there exist no proper subspaces of $\R^d$ that is invariant for all matrices in $S$.

The next result  
relates the properties of a reducible switched system to those of lower dimensional irreducible ones.  
For a discussion, see, for instance, \cite[Section~IV.B]{L2gain}.
	\begin{prop}\label{prop:reduction}
		Let ${S}\subset  M_d(\R)$ be bounded. 
		Then there exist $d_1\ge0$, $d_2>0$, and two subspaces $E_1\subsetneq E_2$ of $\R^d$ 
		of dimensions $d_1$ and $d_1+d_2$ 
		respectively,
		each of them invariant for all matrices in $S$, 
		such that for every basis $v_1,\dots,v_{d_1+d_2}$ of $E_2$ for which $v_1,\dots,v_{d_1}$ is a basis of $E_1$, representing  
for every matrix $A\in {S}$ the linear operator $A|_{E_2}$ in the basis $v_1,\dots,v_{d_1+d_2}$ as the matrix
		\be\label{eq:changeofbasis}
			A|_{E_2}=\left(   \begin{array}{cc}
									A_{11} & A_{12} \\
									0      & A_{22} 
																  \end{array}	
					 \right),
		\ee
		where each $A_{ij}$ is a $d_i\times d_j$ matrix, 
		we have that $\lambda_\tau(S)=\lambda_\tau(\{A_{22}\mid A\in S\})$ and $\{A_{22}\mid A\in S\}$ is irreducible. 
		Moreover, $E_1$ and $E_2$ can be chosen so that 
		either $d_1=0$ or $\lambda_\tau(\{A_{11}\mid A\in S\})<\lambda_\tau(S)$.
	\end{prop}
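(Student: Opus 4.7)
The plan is to build a flag of common invariant subspaces for $S$, pass to a simultaneous block upper triangular form, identify the diagonal block that carries the maximal growth, and then take $E_1\subsetneq E_2$ to be the two consecutive spaces of the flag that bracket this block.

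First, I would construct the flag. Since $S$ is a set of linear operators on a finite dimensional space, the poset of subspaces of $\R^d$ invariant under every element of $S$ is finite, so it admits a maximal chain
\[
\{0\}=V_0\subsetneq V_1\subsetneq\dots\subsetneq V_k=\R^d.
\]
Maximality forces each induced action of $S$ on $V_i/V_{i-1}$ to be irreducible. Denoting by $A^{(i)}$ the matrix of the action of $A\in S$ on $V_i/V_{i-1}$ in a basis adapted to the flag, every $A\in S$ then has the block upper triangular form with diagonal blocks $A^{(1)},\dots,A^{(k)}$ and, for each $i$, the family $S^{(i)}:=\{A^{(i)}\mid A\in S\}$ is irreducible.

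The core technical step is the identity
\begin{equation*}
\lambda_\tau(S)\;=\;\max_{1\le i\le k}\lambda_\tau(S^{(i)})=:\mu.
\end{equation*}
For the inequality $\lambda_\tau(S)\ge\lambda_\tau(S^{(i)})$, I would observe that any $\wt A\in\mc{S}^\tau_\infty$ valued in $S^{(i)}$ lifts to some $A\in\mc{S}^\tau_\infty$ valued in $S$ with the same switching times, and that for any $x_0\in V_i$ projecting to $y_0\in V_i/V_{i-1}$ the projection of $\Phi_A(t,0)x_0$ onto $V_i/V_{i-1}$ is $\Phi_{A^{(i)}}(t,0)y_0$; since projection is $1$-Lipschitz, the growth rate of $\Phi_A(t,0)x_0$ dominates that of $\Phi_{A^{(i)}}(t,0)y_0$. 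For the reverse inequality, I would exploit the identity (\ref{two-exponents}) $\lambda_\tau=\wh\lambda_\tau$: by definition of the uniform exponential rate, for every $\epsilon>0$ there exists $C_\epsilon$ such that
\begin{equation*}
\|\Phi_{A^{(i)}}(t,s)\|\le C_\epsilon\, e^{(\mu+\epsilon)(t-s)},\qquad \forall\, i,\; A\in\mc{S}^\tau_\infty,\; 0\le s\le t,
\end{equation*}
where shift invariance of $\mc{S}^\tau_\infty$ under time translations is used. Since $\Phi_A(t,0)$ is block upper triangular with the $\Phi_{A^{(i)}}$ on the diagonal, a variation of constants formula applied recursively to each off-diagonal block (using that each $A_{ij}$ is uniformly bounded by the boundedness of $S$) yields $\|\Phi_A(t,0)\|\le C'_\epsilon(1+t)^{k-1}e^{(\mu+\epsilon)t}$, whence $\lambda_\tau(S)\le\mu+\epsilon$ and, letting $\epsilon\to 0$, $\lambda_\tau(S)\le \mu$.

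Finally, I would pick $i^\star$ to be the smallest index in $\{1,\dots,k\}$ with $\lambda_\tau(S^{(i^\star)})=\lambda_\tau(S)$ and set $E_1:=V_{i^\star-1}$, $E_2:=V_{i^\star}$. With this choice, $A_{22}=A^{(i^\star)}$ in any basis of $E_2$ extending a basis of $E_1$, so $\{A_{22}\mid A\in S\}$ is irreducible with Lyapunov exponent $\lambda_\tau(S)$. If $i^\star=1$ then $d_1=0$; otherwise $A_{11}=A|_{V_{i^\star-1}}$ is itself block upper triangular with diagonal blocks $A^{(1)},\dots,A^{(i^\star-1)}$, and applying the identity of the previous step to this smaller family gives $\lambda_\tau(\{A_{11}\})=\max_{j<i^\star}\lambda_\tau(S^{(j)})<\lambda_\tau(S)$ by the minimality of $i^\star$. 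The main obstacle is the upper bound $\lambda_\tau(S)\le\mu$: the difficulty is ruling out extra exponential growth contributed by the off-diagonal blocks, which is handled by the polynomial correction in the variation of constants estimate together with the uniformity afforded by (\ref{two-exponents}).
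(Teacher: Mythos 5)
The paper does not present its own proof of this proposition: it defers to Section~IV.B of the cited $L^2$-gain reference. Your self-contained argument follows the natural route — build a maximal flag $\{0\}=V_0\subsetneq\dots\subsetneq V_k=\R^d$ of common invariant subspaces, note that the quotient actions $S^{(i)}$ on $V_i/V_{i-1}$ are irreducible, prove $\lambda_\tau(S)=\max_i\lambda_\tau(S^{(i)})$ by combining the obvious lower bound with a variation-of-constants upper bound, and take $E_1=V_{i^\star-1}$, $E_2=V_{i^\star}$ for $i^\star$ the smallest maximizing index. This does yield the full statement, including the last clause.

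Two points deserve repair. First, the poset of $S$-invariant subspaces of $\R^d$ is in general \emph{not} finite (if $S$ consists of scalar multiples of the identity, every subspace is invariant); the existence of a maximal chain follows instead from the fact that any chain of subspaces has length at most $d+1$, so starting from $\{0\}\subsetneq\R^d$ one arrives at a maximal chain after finitely many insertions. Second, $\mc{S}^\tau_\infty$ is not closed under arbitrary time shifts: shifting a dwell-time signal to a point in the interior of a constant piece can leave an initial piece of duration $<\tau$. The uniform estimate $\|\Phi_{A^{(i)}}(t,s)\|\le C'_\varepsilon e^{(\mu+\varepsilon)(t-s)}$ still holds, because the possibly short first piece has transition matrix of norm at most $e^{\tau\sup_{B\in S}\|B\|}$, and after peeling it off (i.e., restarting at the first switching time $t_j\ge s$) what remains is a genuine dwell-time signal, to which the uniform bound coming from the definition of $\lambda_\tau$ together with equality~\eqref{two-exponents} applies. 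With these two repairs, your variation-of-constants iteration yields the polynomial-times-exponential bound $\|\Phi_A(t,0)\|\le C''_\varepsilon(1+t)^{k-1}e^{(\mu+\varepsilon)t}$ and the proof closes.
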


	\subsection{Projected switched systems and their orbits} It is an old idea  
	to describe the nonzero trajectories $x(t)$ of a linear system in coordinates $(\|x(t)\|,x(t)/\|x(t)\|)$.
	The advantage is that the dynamics of the {\it angular part} $x(t)/\|x(t)\|$ follow a well-defined switched system on a compact manifold. 
In order to formalize this idea, we associate with \eqref{eq:lss} the following projected switched system.
\begin{defi}\label{defi:othersys}
		Denoting by $\pi:\R^d\setminus\{0 \}\to\Prd$ the canonical projection of $\R^d$ onto its associated projective space and setting $s(t)=\pi(x(t))$, we define the nonlinear projected switched system 
		\be\label{eq:projsyst}\tag{$\pi\Sigma_\tau$}
			\dot{s}(t)=(\pi_*A(t))(s(t)),\qquad t\geq 0,\quad A\in\mc{S}^\tau_\infty, \quad s\in\Prd,
		\ee
		where for any matrix $A\in M_d(\R)$ we denote by $\pi_*A$ the projection of the vector field $ x\mapsto Ax $ onto $\Prd$.
	\end{defi}
	 
	Using the local identification $s=x/\|x\|$, 
	\eqref{eq:projsyst} can be rewritten as
	\be\label{eq:projsys}
		\dot{s}(t)=h(A(t),s(t))s(t),\quad \text{with}\quad h(A,s)=A-\langle s,A s\rangle \mathrm{Id}_d.
	\ee

			The system semigroup \eqref{eq:semigroup} associated with \eqref{eq:projsyst} is given by
	\be\label{eq:Smt}
		\Smt=\{ e^{t_mA_m}\circ\dots\circ e^{t_1A_1}\mid 
		m\in\N,\; A_1,\dots,A_m\in S,\;t_1,\dots,t_m\geq\tau\}.
	\ee
	We identify $\Smt$  with a semigroup both of  $\mathrm{GL}(\R,d)$ and of the group of diffeomorphisms of $\Prd$.
It is also useful to introduce the system group
	\be\label{eq:PP}
		\mc{P}_{S}=\{ e^{t_mA_m}\circ\dots\circ e^{t_1A_1}\mid m\in \N,\;A_1,\dots,A_m\in S,\;t_1,\dots,t_m\in\R\}.
	\ee
	It might be worth noticing that $\mc{P}_{S}$ would not be not affected if we added the dwell time constraint $|t_i|\geq \tau$, $i=1,\dots,m$. 
	Indeed, for every $A\in S$ and every $t\in\R$, we have $e^{t A}=e^{(2\tau+t)A}\circ e^{-2\tau A}$,
and if $|t|<\tau$, then $2\tau+t> \tau$.

	In addition to the previous notations, for every $s_0\in\Prd$ we denote the \emph{orbit 
		of $\mc{P}_{S}$ through $s_0$} by
	\be\label{eq:orbit}
		\mc{O}(s_0)=\{ \Phi (s_0)\mid \Phi \in \mc{P}_{S} \}.
	\ee

	The following result is a consequence of the analytic version of the Orbit Theorem \cite{Nagano,SussmannJurdjevic}. 
	
	\begin{prop}\label{prop:orbitanalytic}
		Let ${S}$ be a subset of $M_d(\R)$. Then the following properties hold true:
		\begin{enumerate}
			\item [i)]
			 For every $s_0\in\Prd$ the orbit $\mc{O}(s_0)$ is an immersed submanifold of $\Prd$. Moreover, for every $s\in \mc{O}(s_0)$, one has 
			\be\label{eq:tangspace}
				T_s\mc{O}(s_0)=\Lie_s( \pi_*{S} ),
			\ee
			where 
			$\pi_*{S}$ denotes the 
	set of vector fields on $\Prd$ defined by
	\[\pi_*{S}=\{ \pi_*A\mid A\in{S} \};\]
			\item [ii)] There exists $s_0\in\Prd$ for which the orbit $\mc{O}(s_0)$ is an embedded compact submanifold of $\Prd$;
			\item [iii)] If $S$ is irreducible then for every $s_0\in\Prd$ and every proper subspace $V\subset \R^d$ the orbit $\mc{O}(s_0)$ is not contained in $\mathbb{P}(V):=\{\pi x\mid x\in V,\,x\ne 0\}$. 
		\end{enumerate}
	\end{prop}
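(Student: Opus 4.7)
The proof naturally splits according to the three claims. For~(i), the plan is to invoke the analytic Orbit Theorem of Nagano--Sussmann--Jurdjevic: the vector fields $\pi_*A$ are real analytic on $\Prd$, and the orbit $\mc{O}(s_0)$ is precisely the orbit through $s_0$ of the family $\pi_*S=\{\pi_*A\mid A\in S\}$, since elements of $\mc{P}_{S}$ are compositions of forward and backward flows of these vector fields. In the analytic category, the Orbit Theorem provides both the immersed submanifold structure on $\mc{O}(s_0)$ and the identification \eqref{eq:tangspace} of each tangent space with $\Lie_s(\pi_*S)$.

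For~(ii), the plan is to apply Theorem~\ref{thm:closedorbit} to the connected (possibly immersed) Lie subgroup $B\subset \GL(\R,d)$ whose Lie algebra is the Lie subalgebra of $M_d(\R)$ generated by $S$. By the orbit theorem underlying~(i), for every $s\in\Prd$ the $B$-orbit of $s$ on $\Prd$ coincides with $\mc{O}(s)$, since both are maximal integral manifolds of the distribution generated by $\pi_*S$. Theorem~\ref{thm:closedorbit} produces $y\in \mathbb{S}^{d-1}$ whose $B$-orbit under $\varphi$ is closed in $\mathbb{S}^{d-1}$, hence compact. Pushing forward along the double cover $\mathbb{S}^{d-1}\to\Prd$, which intertwines the two actions, we obtain that $\mc{O}(\pi(y))$ is the continuous image of a compact set, hence compact and closed in $\Prd$. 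Since closed orbits of smooth Lie group actions are automatically embedded (the orbit map $B/B_{\pi(y)}\to\Prd$ is a bijective immersion onto a locally closed image, hence a homeomorphism onto its image), $\mc{O}(\pi(y))$ is embedded, which proves~(ii).

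For~(iii), I would argue by contradiction. Suppose $\mc{O}(s_0)\subset \mathbb{P}(V)$ for some proper subspace $V\subsetneq \R^d$, pick a lift $x_0\in\R^d\setminus\{0\}$ with $\pi(x_0)=s_0$, and set
\[
W=\mathrm{span}\{gx_0\mid g\in \mc{P}_{S}\}.
\]
For every $g\in\mc{P}_{S}$, the nonzero vector $gx_0$ satisfies $\pi(gx_0)=g\cdot s_0\in\mathbb{P}(V)$, so $gx_0\in V$ and hence $W\subset V$. By construction $W$ is stable under each $e^{tA}$ for $A\in S$ and $t\in\R$; differentiating at $t=0$ yields $AW\subset W$ for every $A\in S$. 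Since $0\ne x_0\in W\subset V\subsetneq \R^d$, this provides a proper nonzero $S$-invariant subspace of $\R^d$, contradicting irreducibility.

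The main obstacle in this plan is clearly part~(ii): this is where the nontrivial input of Theorem~\ref{thm:closedorbit} enters, and one must moreover reconcile the $B$-action on $\mathbb{S}^{d-1}$ with the $\mc{P}_{S}$-action on $\Prd$ via the analytic orbit theorem, to ensure that a closed orbit on the sphere actually descends to a genuinely embedded compact orbit on $\Prd$. Parts~(i) and~(iii), by contrast, amount respectively to citing the Orbit Theorem and to a one-step linear-algebraic invariance argument.
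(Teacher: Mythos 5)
Your proof is correct and follows essentially the same three-step route as the paper: cite the Nagano--Sussmann analytic Orbit Theorem for~(i), invoke Theorem~\ref{thm:closedorbit} and then upgrade "closed" to "embedded" for~(ii), and derive an $S$-invariant proper subspace by a spanning argument for~(iii). The only cosmetic difference is in (ii): the paper obtains embeddedness by citing a general result on closed orbits of families of vector fields, whereas you argue via the Lie-group orbit map $B/B_{\pi(y)}\to\Prd$ (which does require the standard Baire-category/second-countability refinement to conclude that the bijective immersion onto the locally closed image is a homeomorphism); both are legitimate and give the same conclusion.
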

	
	\begin{proof}
		Point i) is a direct consequence of the analytic version of Orbit Theorem by Nagano and Sussmann, since $\Prd$ is an analytic manifold and each of the vector fields $\pi_*A$ is analytic on $\Prd$. 

As for point ii), the existence of $s_0$ such that $\mc{O}(s_0)$ is compact follows directly from Theorem~\ref{thm:closedorbit}, since 
$\mc{P}_{S}$
 is a Lie subgroup of $\GL(d,\R)$ (see, e.g., \cite[Propositions~2.6, 2.7]{Elliott}).
We then use the general fact that orbits which are closed as subsets of the ambient manifold are not only immersed but also embedded submanifolds (see \cite[Corollary 2.5]{BM97}). 
		
		We are left to prove iii). 
		Assume by contradiction that there exists $s_0=\pi x_0\in\Prd$ and a proper subspace $V\subset \R^d$ such that $\mc{O}(s_0)\subset \mathbb{P}(V)$.  
		Let 
		\[W={\rm span}\{ \Phi x_0\mid \Phi\in \mc{P}_{S} \}.\]
Then $W$ is contained in $V$ and is invariant for all matrices in $S$. Since $V$ is a proper subspace and $W\ne(0)$, this contradicts the irreducibility of $S$. 
	\end{proof}

	\begin{prop}\label{prop:notem}
		The interior of $\Smt$ for the relative topology on $\mc{P}_{S}$, seen as an immersed submanifold of ${\rm GL}(\R,d)$, is nonempty, that is,
		\[
			\mathrm{int}_{\mc{P}_{S}}(\Smt)\neq \emptyset.
		\]
	\end{prop}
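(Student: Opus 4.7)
The plan is to regard $\mc{P}_{S}$ as a connected Lie subgroup of $\GL(\R,d)$ equipped with its intrinsic smooth Lie group structure, and then to apply the dwell-time Krener theorem (Proposition~\ref{prop:notempty}) to an auxiliary control system built on $\mc{P}_{S}$ itself. Denote by $\mathfrak{g}\subset M_d(\R)$ the Lie algebra of $\mc{P}_{S}$, which is the matrix Lie subalgebra generated by $S$, and set $n=\dim\mc{P}_{S}=\dim\mathfrak{g}$.

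On the manifold $\mc{P}_{S}$, the plan is to consider the control system $\dot g(t)=A(t)g(t)$ with control $A(\cdot)\in\mc{S}^\tau$, whose underlying family of vector fields is $\mc{F}=\{X_A:A\in S\}$ with $X_A(g)=Ag$. Each $X_A$ is a smooth and complete vector field on $\mc{P}_{S}$, because its flow $g\mapsto e^{tA}g$ is well-defined for every $t\in\R$ and preserves $\mc{P}_{S}$ (both factors lie in the subgroup). A direct computation in $\GL(\R,d)$ gives $[X_A,X_B]=-X_{[A,B]}$, so the Lie algebra of vector fields generated by $\mc{F}$ agrees with $\{X_C:C\in\mathfrak{g}\}$. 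Evaluating at an arbitrary $g\in\mc{P}_{S}$ yields $\Lie_g(\mc{F})=\{Cg:C\in\mathfrak{g}\}=\mathfrak{g}\cdot g=T_g\mc{P}_{S}$, so condition~\eqref{eq:mainhyp} holds on $\mc{P}_{S}$.

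With these verifications in place, Proposition~\ref{prop:notempty} applied to the above system with $M=\mc{P}_{S}$ and initial point $q_0=\mathrm{Id}_d$ produces, for any $T>n\tau$, an attainable set $A_{\mathrm{Id}_d}^{T,\tau}$ with nonempty interior in $\mc{P}_{S}$. On the other hand, every trajectory endpoint $\phi(T(u),\mathrm{Id}_d,u)$ with $u\in\mc{U}^\tau$ has the form $e^{t_mA_m}\circ\cdots\circ e^{t_1A_1}$ for some $A_1,\dots,A_m\in S$ and $t_1,\dots,t_m\geq\tau$, so $A_{\mathrm{Id}_d}^{T,\tau}\subset\Sft_S$ by the characterization~\eqref{eq:Smt} of $\Sft_S$. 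Consequently, the nonempty open subset of $\mc{P}_{S}$ furnished by Proposition~\ref{prop:notempty} is contained in $\Sft_S$, and the conclusion follows.

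The main technical point, and the only place where one really has to be careful, is ensuring that the auxiliary system genuinely fits the framework of Section~\ref{sec:genconst} on $\mc{P}_{S}$ endowed with its own (possibly immersed) smooth structure: namely, that the $X_A$ are smooth and complete on $\mc{P}_{S}$ itself and that the Lie bracket computation above takes place within $\mathrm{Vec}(\mc{P}_{S})$. These facts are standard consequences of $\mc{P}_{S}$ being a Lie subgroup (compare with~\cite[Propositions~2.6,~2.7]{Elliott}, already invoked in the proof of Proposition~\ref{prop:orbitanalytic}(ii)), so no additional machinery is needed beyond what is already referenced in the paper.
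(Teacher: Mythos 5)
Your proposal is correct and takes essentially the same route as the paper: both lift to the auxiliary right-invariant control system $\dot g = A(t)g$ on $\mc{P}_S$, verify the Lie algebra rank condition there, invoke Proposition~\ref{prop:notempty} starting from $\mathrm{Id}$, and observe that the resulting attainable set sits inside $\Sft_S$. The only (cosmetic) difference is that you verify \eqref{eq:mainhyp} by the explicit bracket identity $[X_A,X_B]=-X_{[A,B]}$ for right-invariant fields and the identification $\Lie_g(\mc{F})=\mathfrak{g}\,g=T_g\mc{P}_S$, whereas the paper simply cites the Orbit Theorem for the same conclusion; you also correctly use $n=\dim\mc{P}_S$ in the threshold $T>n\tau$ where the paper writes $T>d\tau$ by a slight abuse.
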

	
	\begin{proof} 
		Consider 
		the system
		\begin{equation}\label{eq:lifted-sys}
		\dot{M}(t)=A(t) M(t)\qquad M(t)\in \mc{P}_{S},
		\end{equation}
		where $A(\cdot)\in \mc{S}^\tau_\infty$ is seen as a control law. 
		Such a system satisfies hypothesis \eqref{eq:mainhyp} on $\mc{P}_{S}$: indeed, 
		by the Orbit Theorem,  
		for every $Y\in \mc{P}_{S}$ the tangent $T_Y \mc{P}_{S}$ is equal to $\mathrm{Lie}_Y\{H\mapsto A H\mid A\in S\}$. (Notice that $H\mapsto A H$ is an analytic vector field on $\mathrm{GL}(\R,d)$ for every $A\in 
		M_d(\R)$.)
		
		We then apply Proposition~\ref{prop:notempty} to system \eqref{eq:lifted-sys},
		 to conclude that the attainable set $A_{\mathrm{Id}}^{T,\tau}$ from the identity map $\mathrm{Id}\in \mathrm{GL}(\R,d)$ has nonempty interior in $\mc{P}_{S}$ as soon as $T>d\tau$. This concludes the proof, since $\Smt$ contains $A_{\mathrm{Id}}^{T,\tau}$.
	\end{proof}

	\subsection{Uniqueness of the $\tau$-ICS in the projective space} 
	The main result of this section is Theorem~\ref{thm:uniq} stated below, which extends to the dwell-time setting the  uniqueness result for $0$-ICS of linear systems and the characterization of such a unique $0$-ICS (cf.~\cite{ArnoldKlie}). Let us mention that similar uniqueness results have been obtained (in the case $\tau=0$) for systems on Lie groups with a suitably defined linear structure (see \cite{Ayala2017}).

	\begin{thm}\label{thm:uniq}
	Assume that $S$ is an irreducible subset of $M_n(\R)$.  
		Let $s_0\in\Prd$ be such that the orbit $\mc{O}(s_0)$  
		is closed. Then there exists a unique $\tau$-ICS $D$ for 
		 system \eqref{eq:projsyst} contained in  $\mc{O}(s_0)$. Moreover, 
		\[
			D=\bigcap_{s\in\mc{O}(s_0)}\cl_{\mc{O}(s_0)}(\Smt(s))
		\]
		and $\mathrm{int}_{\mc{O}(s_0)}D\neq \emptyset$, that is, $D$ has nonempty interior in the orbit topology.
	\end{thm}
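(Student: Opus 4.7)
The plan is to proceed in three stages. First, I set up the machinery of Section~\ref{s:DTCS} on the compact manifold $\mathcal{M}:=\mathcal{O}(s_0)$: since $\mathcal{O}(s_0)$ is assumed closed and always immersed (Proposition~\ref{prop:orbitanalytic}(i)), it is in fact an embedded compact submanifold of $\Prd$, and it is connected because $\mathcal{P}_S$ is path-connected through paths $s\mapsto e^{st_mA_m}\circ\cdots\circ e^{st_1A_1}$. The restricted family $\pi_*S|_\mathcal{M}$ satisfies the Lie algebra rank condition thanks to Proposition~\ref{prop:orbitanalytic}(i), so Theorem~\ref{thm:existence} yields a $\tau$-ICS $D_\ast\subset\mathcal{M}$ with $\mathrm{int}_\mathcal{M}(D_\ast)\neq\emptyset$, and Proposition~\ref{prop:finite} ensures there are only finitely many $\tau$-ICS in $\mathcal{M}$.

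Second, I identify the candidate $D_0:=\bigcap_{s\in\mathcal{M}}\cl_\mathcal{M}(\Smt(s))$ as the unique $\tau$-ICS, once it is shown to be nonempty. For any $\tau$-ICS $D'\subset\mathcal{M}$ and any $q\in D'$, $\cl_\mathcal{M}(\Smt(q))=D'$, whence $D_0\subset D'$. If $D_0\neq\emptyset$, then $D_0$ is itself a $\tau$-ICS: given $p=\lim g_n(s)$ with $g_n\in\Smt$, for any $g\in\Smt$ and $s\in\mathcal{M}$ one has $g(p)=\lim(g\circ g_n)(s)\in\cl_\mathcal{M}(\Smt(s))$, so $\Smt(D_0)\subset D_0$; and for $q\in D_0$, $D_0\subset\cl_\mathcal{M}(\Smt(q))$ holds by definition, while the reverse inclusion follows from invariance, yielding $\cl_\mathcal{M}(\Smt(q))=D_0$ so that Remark~\ref{rem:closedinvset} applies. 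By maximality, $D'=D_0$ for every $\tau$-ICS $D'$, proving uniqueness, the intersection formula, and (via $D_\ast=D_0$) the nonempty interior in one stroke.

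Third and most delicate: I must show $D_0\neq\emptyset$, equivalently $D_\ast\subset\cl_\mathcal{M}(\Smt(s))$ for every $s\in\mathcal{M}$. My strategy is a contradiction argument assuming two distinct $\tau$-ICS $D_1,D_2\subset\mathcal{M}$, which are necessarily disjoint (a common point $p$ would force $D_1=\cl_\mathcal{M}(\Smt(p))=D_2$) and each with nonempty interior. The three tools to combine are: (i) transitivity of the $\mathcal{P}_S$-action on $\mathcal{M}$, which supplies $g\in\mathcal{P}_S$ mapping any chosen $p_1\in\mathrm{int}_\mathcal{M}(D_1)$ to any $p_2\in\mathrm{int}_\mathcal{M}(D_2)$; (ii) Proposition~\ref{prop:notem}, which furnishes an open set $W\subset\Smt$ inside $\mathcal{P}_S$; and (iii) irreducibility of $S$, which via a standard differentiation argument forces the linear span of $\pi^{-1}(D_i)$ in $\R^d$ to be an $S$-invariant subspace and hence all of $\R^d$. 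The aim is to construct an honest $h\in\Smt$ mapping an open subset of $\mathrm{int}_\mathcal{M}(D_1)$ into $\mathrm{int}_\mathcal{M}(D_2)$, contradicting the positive $\Smt$-invariance of $\mathrm{int}_\mathcal{M}(D_1)$ (Lemma~\ref{lemma:properties}(ii)).

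The chief obstacle is this final construction: $\Smt$ is typically not dense in $\mathcal{P}_S$---Example~\ref{ex:first} shows that $\Smt(s)$ need not be dense in $\mathcal{M}$ already for $s$ in the ICS---so the group element $g$ cannot simply be perturbed into $\Smt$ through $W$. The anticipated remedy is to use Corollary~\ref{cor:revers} within the orbit, combined with the open set $W$ and the full-span property of the lifted cones, to express the effect of $g$ on $p_1$ as a finite concatenation of $\Smt$-flows linking neighborhoods of $p_1$ and $p_2$; the backward-time factors of $g$ should be absorbed by the dwell-time slack exhibited in Remark~\ref{rmk:bestkren}.
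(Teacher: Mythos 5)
Your Stages 1 and 2 are sound and match the paper: Theorem~\ref{thm:existence} and Proposition~\ref{prop:finite} apply to the compact embedded orbit $\mathcal M=\mathcal O(s_0)$, and the set $D_0=\bigcap_{s\in\mathcal M}\cl_{\mathcal M}(\Smt(s))$, once shown nonempty, is the unique $\tau$-ICS (this is exactly Lemma~\ref{lemma:DTCS}). The whole weight of the theorem therefore rests on Stage 3, and there your argument has a genuine gap that I do not think can be filled along the lines you sketch.

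You propose a contradiction argument: from two disjoint $\tau$-ICS $D_1,D_2$ you would manufacture an $h\in\Smt$ sending a point of $\mathrm{int}_{\mathcal M}(D_1)$ into $\mathrm{int}_{\mathcal M}(D_2)$, contradicting positive invariance. But positive invariance is precisely what makes such an $h$ nonexistent under the assumption $D_1\ne D_2$; you cannot produce it from $\mathcal P_S$-transitivity plus \emph{generic} slack. Your ``anticipated remedy''---absorbing the backward-time pieces of $g\in\mathcal P_S$ using dwell-time slack as in Remark~\ref{rmk:bestkren}---misreads the paper's observation about $\mathcal P_S$. The identity $e^{tA}=e^{(2\tau+t)A}\circ e^{-2\tau A}$ keeps you inside $\mathcal P_S$ (both signs of time allowed); it does not convert a backward flow into forward flows of duration $\ge\tau$, so it cannot push $g$ into $\Smt$. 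And $\Smt$ is emphatically not dense in $\mathcal P_S$: Example~\ref{ex:first} already exhibits $\Smt(s)$ much smaller than $\mathcal O(s)$. Corollary~\ref{cor:revers} and Proposition~\ref{prop:notem} give you an open subset $W\subset\Smt$ and local surjectivity near single points, but neither supplies a concatenation that crosses from one positively invariant set to another. The ``linear span of $\pi^{-1}(D_i)$ is all of $\R^d$'' observation is true by irreducibility, but it lives in $\R^d$, not in the semigroup, and you do not explain how to turn it into a single element of $\Smt$.

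What the paper actually does, and what your plan is missing, is a structural argument rather than a transport argument. Irreducibility guarantees (via the Lie algebra of $\mathcal P_S$) that the semisimple elements are generic, and since $\mathrm{int}_{\mathcal P_S}(\Smt)\ne\emptyset$ one can pick a \emph{semisimple} $W\in\mathrm{int}_{\mathcal P_S}(\Smt)$. Letting $V$ be the sum of eigenspaces of $W$ with maximal real part, Lemma~\ref{lemma:om-lim-convex} shows two things: the forward $W$-orbit closure of a dense set of points in $\mathcal O(s_0)$ meets $\mathbb P(V)$, and $\mathcal O(s_0)\cap\mathbb P(V)$ is path-connected. Combining this with Dirichlet's approximation for $(W|_V)^{n_i}\to\mathrm{Id}|_V$ one shows that every $\tau$-ICS in $\mathcal O(s_0)$ contains the \emph{same} nonempty set $\mathcal O(s_0)\cap\mathbb P(V)$. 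Two $\tau$-ICS that share a point coincide, so uniqueness (and nonemptiness of $D_0$) follows. This ``common core'' mechanism is the key idea you would need; without it, or a substitute of comparable strength, the nonemptiness of $D_0$ is not established.
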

	
	\begin{remark}\label{rmk:Huniq}
	In the case where $\pi_* S$ satisfies assumption \eqref{eq:mainhyp}, then $S$ is irreducible and $\mc{O}(s_0)=\Prd$. In this case Theorem~\ref{thm:uniq} implies that  
	system \eqref{eq:projsyst} has a unique $\tau$-ICS in $\Prd$.
	\end{remark}
	
	Before providing a proof for Theorem~\ref{thm:uniq}, inspired by the one of \cite[Theorem 3.1]{ArnoldKlie}, let us present a couple of preliminary lemmas.
	
	\begin{lemma}\label{lemma:DTCS}
		Let $s_0\in\Prd$ be such that the orbit $\mc{O}(s_0)
		$ is closed, 
		and let us define
		\[
		D	=\bigcap_{ s\in\mc{O}(s_0) }\cl_{\mc{O}(s_0)}(\Smt(s)). 
		\]
		Assume that  $D\neq\emptyset$. Then $D$ is a $\tau$-ICS for 
		system \eqref{eq:projsyst} on 
		$\mc{O}(s_0)$.
	\end{lemma}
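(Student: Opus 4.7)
The plan is to apply the first assertion of Remark~\ref{rem:closedinvset} (which does not require \eqref{eq:mainhyp}) to the system \eqref{eq:projsyst} restricted to the manifold $\mc{O}(s_0)$, which is an embedded submanifold by Proposition~\ref{prop:orbitanalytic}(ii). It thus suffices to establish
\[
D=\cl_{\mc{O}(s_0)}(\Smt(s))\qquad\text{for every }s\in D.
\]
I would break this into three short steps. First, $D$ is closed in $\mc{O}(s_0)$ as an intersection of relative closures. Second, the inclusion $D\subset \cl_{\mc{O}(s_0)}(\Smt(s))$ is immediate for $s\in D$ by specializing the intersection defining $D$ to the index $s$ itself.

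The substantive step is the reverse inclusion. I would prove that $D$ is dwell-time positively invariant in $\mc{O}(s_0)$, namely $\Smt(s)\subset D$ for every $s\in D$. Fix such an $s$ and any $g\in\Smt$; for an arbitrary $p\in\mc{O}(s_0)$, the membership $s\in\cl_{\mc{O}(s_0)}(\Smt(p))$ produces a sequence $(g_n)\subset \Smt$ with $g_n(p)\to s$. By continuity of $g$ we obtain $g\circ g_n(p)\to g(s)$, and by the semigroup property of $\Smt$ we have $g\circ g_n\in\Smt$, so that $g(s)\in \cl_{\mc{O}(s_0)}(\Smt(p))$. Since $p\in\mc{O}(s_0)$ is arbitrary, this shows $g(s)\in D$. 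Combining $\Smt(s)\subset D$ with the closedness of $D$ then yields $\cl_{\mc{O}(s_0)}(\Smt(s))\subset D$, completing the identification.

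Throughout the argument everything stays inside $\mc{O}(s_0)$, since $\mc{O}(s_0)$ is a $\mc{P}_S$-orbit and $\Smt\subset \mc{P}_S$. The proof is essentially soft, and I do not anticipate a genuine obstacle: the only ingredients are the semigroup property of $\Smt$, the continuity of its elements (as restrictions to $\mc{O}(s_0)$ of diffeomorphisms of $\Prd$), and the definition of $D$ as an intersection of closures in the orbit topology. If any subtlety arises, it is simply the bookkeeping required to work with the relative topology of $\mc{O}(s_0)$ rather than that of $\Prd$.
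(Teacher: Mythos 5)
Your proof is correct and follows essentially the same route as the paper's: the key step in both is to show that $\Smt(s)\subset D$ for each $s\in D$ by using continuity of an arbitrary $g\in\Smt$ together with the semigroup property, which combined with the closedness of $D$ and the trivial inclusion $D\subset \cl_{\mc{O}(s_0)}(\Smt(s))$ gives $D=\cl_{\mc{O}(s_0)}(\Smt(s))$ for all $s\in D$, after which Remark~\ref{rem:closedinvset} applies. You merely spell out the sequence argument and the appeal to the remark a bit more explicitly than the paper does.
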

	
	\begin{proof}
		Since $D\subset \cl_{\mc{O}(s_0)}( \Smt(s) )$ for every $s\in D$ by construction, it is sufficient to show that $\Smt(s)\subset D$ for every $s\in D$. 
		
		Fix $s\in D$, $s'\in \Smt(s)$, and let us prove that $s'\in D$. By construction of $D$, $s$ is in $\cl_{\mc{O}(s_0)}(\Smt(r))$ for every $r\in\mc{O}(s_0)$. But this implies that also $s'$ is in $\cl_{\mc{O}(s_0)}( \Smt(r) )$ for every $r\in\mc{O}(s_0)$, that is, $s'\in D$.
	\end{proof}
	
	Given a linear subspace $V$ of $\R^d$, 
	we can identify any isomorphism of $V$ with a \emph{collineation of $V$}, i.e., a diffeomorphism of $\mathbb{P}(V)$, the projective space of $V$. Let us endow the group of collineations of $V$, denoted by $\mathrm{PGL}(V)$, with the topology of the uniform convergence.

We recall that a 
		 matrix $A\in \mathrm{GL}(\R,d)$ is \emph{semisimple} if and only if it admits a diagonal complex Jordan normal form.

	\begin{lemma}\label{lemma:om-lim-convex}
		Let $S$ be an irreducible subset of $M_d(\R)$ and $W\in \mc{P}_S$ be semisimple. Let $V\subset \R^d$ be the linear subspace spanned by the eigenspaces of $W$ corresponding to eigenvalues having maximal real part. 
		Fix $s_0\in \Prd$. Then
		\[\Gamma(s):=			\ov{\left\{ W^i(s)\mid i\in\N \right\}}\cap \mathbb{P}(V)
		\]
is nonempty for every $s$ in a dense subset of $\mc{O}(s_0)$. 
Assume moreover that $s_0$ is such that $\mc{O}(s_0)
		$ is closed. Then
$\mc{O}(s_0)\cap \mathbb{P}(V)$ is path-connected. 
\end{lemma}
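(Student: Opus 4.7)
The plan rests on the semisimplicity of $W$. Since $W$ is semisimple there is a $W$-invariant direct sum decomposition $\R^d=V\oplus V''$, where $V''$ is the span of the eigenspaces of $W$ whose eigenvalues have real part strictly less than $\mu:=\max_i\mathrm{Re}\,\lambda_i(W)$. Let $P:\R^d\to V$ be the projection along $V''$; then $PW=WP$. Setting $U:=e^{-\mu}W|_V$, all eigenvalues of $U$ have modulus $1$ and $U$ is semisimple, so the closure $G:=\overline{\{U^n:n\in\Z\}}$ in $\mathrm{GL}(V)$ is a compact abelian Lie subgroup, and by a standard fact about cyclic subgroups of compact groups one has $\overline{\{U^n:n\ge 0\}}=G$. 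On $V''$, the family $e^{-n\mu}W^n|_{V''}$ decays exponentially.

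For $s=\pi(x)\in\mc{O}(s_0)$ with $Px\neq 0$, the decomposition
\[
W^nx=e^{n\mu}\bigl(U^nPx+e^{-n\mu}W^n(x-Px)\bigr)
\]
together with the decay of the second term shows that the accumulation points of $\{\pi(W^nx)\}_{n\ge 0}$ in $\Prd$ are exactly $\pi(G\cdot Px)\subset\mathbb{P}(V)$. In particular $\tilde P(s):=\pi(Px)$ lies in $\Gamma(s)$, so $\Gamma(s)\neq\emptyset$. The set of such $s$ is $\mc{O}(s_0)\setminus\mathbb{P}(V'')$, and this is dense in $\mc{O}(s_0)$: irreducibility of $S$ and Proposition~\ref{prop:orbitanalytic}(iii) give $\mc{O}(s_0)\not\subset\mathbb{P}(V'')$, while analyticity of the orbit (Proposition~\ref{prop:orbitanalytic}(i)) combined with the connectedness of $\mc{P}_S$ makes $\mc{O}(s_0)\cap\mathbb{P}(V'')$ a proper real analytic subvariety of $\mc{O}(s_0)$, which thus has empty interior.

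Assume now $\mc{O}(s_0)$ is closed, hence compact and embedded by Proposition~\ref{prop:orbitanalytic}(ii). The formula $\tilde P(\pi(x))=\pi(Px)$ defines a continuous map from $\mc{O}(s_0)\setminus\mathbb{P}(V'')$ to $\mathbb{P}(V)$. Since $\tilde P(s)\in\Gamma(s)\subset\overline{\{W^ns:n\ge 0\}}\subset\mc{O}(s_0)$, the image lies in $\mc{O}(s_0)\cap\mathbb{P}(V)$, and $\tilde P$ acts as the identity on that set, so it is a continuous retraction. If $\dim V=1$ then $\mathbb{P}(V)$ is a single point and the conclusion is immediate; assume henceforth $\dim V\ge 2$. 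The retract yields local path-connectedness of $\mc{O}(s_0)\cap\mathbb{P}(V)$: around any $a$ in it, a small open ball $B_a\subset\mc{O}(s_0)\setminus\mathbb{P}(V'')$ (which exists because $a\notin\mathbb{P}(V'')$) is path-connected and $\tilde P(B_a)$ is a path-connected neighborhood of $a$.

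The main obstacle is the global step: upgrading local path-connectedness to path-connectedness reduces to showing that $\mc{O}(s_0)\cap\mathbb{P}(V)$ is connected. A clean route is to prove that $\mc{O}(s_0)\setminus\mathbb{P}(V'')$ is itself connected, so that its continuous image $\tilde P(\mc{O}(s_0)\setminus\mathbb{P}(V''))=\mc{O}(s_0)\cap\mathbb{P}(V)$ is connected too. This is the delicate point, since the complement of a real analytic subvariety in a connected analytic manifold need not remain connected in general. I would handle it either by a codimension/transversality argument, exploiting the equivariance $\tilde P\circ W=W\circ\tilde P$ and iteration of $W$ to ensure $\mc{O}(s_0)\cap\mathbb{P}(V'')$ has real codimension at least $2$ in $\mc{O}(s_0)$, or by lifting any hypothetical clopen separation of $\mc{O}(s_0)\cap\mathbb{P}(V)$ through the retract to a separation of $\mc{O}(s_0)$, contradicting the connectedness of the orbit of the connected group $\mc{P}_S$.
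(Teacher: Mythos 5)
Your setup is the same as the paper's: the $W$-invariant splitting $\R^d=V\oplus V''$, the linear projection $P$ along $V''$, the induced map $\tilde P$ on $\mc{O}(s_0)\setminus\mathbb{P}(V'')$, and the density argument via $\Gamma(s)\neq\emptyset$ off $\mathbb{P}(V'')$ plus analyticity and Proposition~\ref{prop:orbitanalytic}(iii). That part is sound. You also correctly observe that $\tilde P$ is a continuous retraction onto $\mc{O}(s_0)\cap\mathbb{P}(V)$ and deduce local path-connectedness.

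The gap is exactly where you flag it, and neither of your two proposed repairs closes it. For the codimension route: there is no reason why $\mc{O}(s_0)\cap\mathbb{P}(V'')$ should have real codimension at least $2$ in $\mc{O}(s_0)$; a real analytic subvariety cut out by the vanishing of the $V$-component can perfectly well be a hypersurface, and the stated equivariance of $\tilde P$ with $W$ does not give a codimension bound. For the clopen-lifting route: a decomposition $\mc{O}(s_0)\cap\mathbb{P}(V)=A\sqcup B$ pulls back under $\tilde P$ only to a disjoint open cover of $\mc{O}(s_0)\setminus\mathbb{P}(V'')$, not of $\mc{O}(s_0)$ itself; to contradict connectedness of the orbit you would need to know $\mc{O}(s_0)\setminus\mathbb{P}(V'')$ is connected, which is precisely what must be shown. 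So the argument is circular.

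The paper avoids the global topology of the complement altogether by working one dimension down. Given $s,\tilde s\in\mc{O}(s_0)\cap\mathbb{P}(V)$, take an analytic path $\gamma$ in $\R^d$ whose projection stays in $\mc{O}(s_0)$ and joins lifts of $s$ and $\tilde s$. Along such a one-dimensional path the bad set $\{t:\gamma(t)\in V''\}$ is a discrete set of isolated zeros, and at each such $t$ the projectivized $V$-component $[\gamma_1(t),\dots,\gamma_r(t)]$ is extended by replacing $\gamma$ with its lowest-order nonvanishing derivative transverse to $V''$, yielding a continuous path $\hat\gamma$ in $\mathbb{P}(V)$; closedness of $\mc{O}(s_0)$ together with the inclusion $\Gamma(\pi(x))\subset\mc{O}(s_0)\cap\mathbb{P}(V)$ (from Dirichlet approximation applied to the semisimple $W|_V$) keeps $\hat\gamma$ inside $\mc{O}(s_0)\cap\mathbb{P}(V)$. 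This is the missing step; without it, or some substitute of comparable strength, your argument does not establish path-connectedness.
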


\begin{proof}
Fix a system of coordinates in $\R^d$ such that $W$ is  in diagonal block form with blocks of size $1$ or $2$ and every block of size $2$ has the form $\begin{pmatrix}a &b\\-b&a\end{pmatrix}$ with $b\ne 0$. 
For the Euclidean structure associated with these coordinates, the orthogonal complement $V^\perp$ is spanned by the eigenspaces of $W$ corresponding to eigenvalues  whose real part is not maximal.

Consider a nonempty open subset $\Omega$ of $\mc{O}(s_0)$. 
		Notice that
		\[
			\Gamma(s)= \emptyset \quad \forall s\in \Omega
		\]
		only if $\Omega\subset \mathbb{P}(V^\perp)$. Since $\mc{O}(s_0)$ is an analytic manifold and $\Omega$ is nonempty and open, we deduce that $\mc{O}(s_0)\subset \mathbb{P}(V^\perp)$, which contradicts point  iii) of Proposition~\ref{prop:orbitanalytic}. This proves the first part of the statement.

Assume now that  $\mc{O}(s_0)$ is closed. In particular, $\mc{O}(s_0)\cap \mathbb{P}(V)$ is also closed.
Up to reordering  the coordinates of $\R^d$, we can assume that 
\[V=\{x\in\R^d\mid x_{r+1}=\dots=x_d=0\},\]
where $r$ denotes the dimension of $V$.
Endow $\mathbb{P}(V)$ with the homogeneous coordinates associated with $(x_1,\dots,x_r)$. Then, given $x\in \R^d\setminus V^\perp$, we can write $\pi((x_1,\dots,x_r,0,\dots,0))$ as 
$[x_1,\dots,x_{r}]$. 
Since $W$ is semisimple, Dirichlet's approximation theorem implies that there exists an unbounded sequence $\{ n_i \}_{i\in\N}\subset \N$ satisfying
		\begin{equation}\label{Dat}
		\lim_{i\to\infty}\left( W\big|_V \right)^{n_i}=\mathrm{Id}\big|_V\quad\mbox{in }\mathrm{PGL}(V).
		\end{equation}
		Hence, 
\begin{equation}\label{inOs}
[x_1,\dots,x_{r}]\in \Gamma(\pi(x))\subset \mc{O}(s_0)\cap \mathbb{P}(V)\qquad \forall x\in \R^d\setminus V^\perp\mbox{ such that }\pi(x)\in \mc{O}(s_0).
\end{equation}

Fix $s=\pi(x)$ and $\tilde s=\pi(\tilde x)$ in $\mc{O}(s_0)\cap \mathbb{P}(V)$. Consider an analytic path 
$\gamma:[0,1]\to \R^d$ such that $\gamma(0)=x$, $\gamma(1)=\tilde x$, and $\pi(\gamma(t))\in \mc{O}(s_0)$ for every $t\in[0,1]$. We claim that $t\mapsto [\gamma_1(t),\dots,\gamma_r(t)]$, which is defined for $t$ such that $\gamma(t)\not\in V^\perp$, admits a continuous extension $\hat\gamma:[0,1]\to \mathbb{P}(V)$. Indeed, if $t\in [0,1]$ is such that $\gamma(t)\in V^\perp$, let $m\in \N$ be the smallest positive integer such that the $m$-th derivative $\gamma^{(m)}(t)$ is not in $V^\perp$. Such $m$ exists by analyticity of $\gamma$. Hence $t$ is an isolated time for which $\gamma(t)\in V^\perp$ and
\[ [\gamma_1(\tau),\dots,\gamma_r(\tau)]= [(\gamma^{(m)}_1(t),\dots,\gamma^{(m)}_r(t))+O(t-\tau)]\quad\mbox{as $\tau\to t$.}\]
 
Hence, setting $\hat \gamma(t)=[(\gamma^{(m)}_1(t),\dots,\gamma^{(m)}_r(t))]$ we obtain a continuous extension $\hat\gamma:[0,1]\to \mathbb{P}(V)$ of $\gamma$. 
The values of $\hat \gamma$ are also in $\mc{O}(s_0)$, since the latter is closed and because of \eqref{inOs}. 
This concludes the proof of the lemma.
\end{proof}

	\begin{proof}[Proof of Theorem~\ref{thm:uniq}]
		We claim that there exists a semisimple matrix $W$ in $\mathrm{int}_{\mc{P}_{S}}(\Smt)$. In order to check it, recall that, since  $\mc{P}_{S}$ acts irreducibly on $\Prd$ (Proposition~\ref{prop:orbitanalytic}, point iii)), then all matrices in the Lie algebra $\mathfrak{p}_{S}$ of $\mc{P}_{S}$ out of a set of empty interior are semisimple (see \cite[Proof of Theorem 3.1., Step 1]{ArnoldKlie}). The conclusion then follows noticing that the exponential map $\exp:\mathfrak{p}_{S}\to \mc{P}_{S}$ is 
		a local diffeomorphism at every $y\in \mathfrak{p}_{S}$ and recalling that $\Smt$ has nonempty interior in $\mc{P}_{S}$.
		
		Observe that $\{ W^i\mid i\in\N \}\subset \mathrm{int}_{\mc{P}_{S}}(\Smt)$. Let $V\subset \R^d$ be the linear subspace spanned by the eigenspaces of $W$ corresponding to eigenvalues having maximal real part. 
		
		Fix a $\tau$-ICS $D\subset \mc{O}(s_0)$. 	Since $D$ has nonempty interior in $\mc{O}(s_0)$ we deduce from Lemma~\ref{lemma:om-lim-convex} that
 there exists $s_1 \in D$ such that 
 		\begin{equation*}
			\emptyset\neq \ov{\left\{ W^i(s_1)\mid i\in\N \right\}}\cap \mathbb{P}(V).
		\end{equation*}
In particular, 
\[D\cap \mathbb{P}(V)\neq \emptyset,\]
 since 
\[\ov{\left\{ W^i(s_1)\mid i\in\N \right\}}\subset \cl_{\mc{O}(s_1)}(\Smt(s_1))=D.\]

		Since $W$ is semisimple, Dirichlet's approximation theorem implies that
		there exists an unbounded sequence $\{ n_i \}_{i\in\N}\subset \N$ satisfying \eqref{Dat}.
Hence, for every $z\in\Z$, 
		\begin{equation*}
		\left(W\big|_V\right)^z=\lim_{i\to\infty} \left(W\big|_V\right)^{n_i+z}\in \cl_{\mathrm{PGL}(V)}\left(\left\{ \left(W\big|_V\right)^n\mid n\in\N \right\}\right)
		\end{equation*}
		since $n_i+z\ge 0$ for $i$ large enough.
Together with the closedness of $D$ and its dwell-time invariance, this implies that 
		\begin{equation*}
		W^z(D\cap \mathbb{P}(V))\subset D\cap \mathbb{P}(V)\quad \forall z\in\Z,
		\end{equation*}
and hence
		\begin{equation}\label{eq:ZbyN}
		W^z(D\cap \mathbb{P}(V))= D\cap \mathbb{P}(V)\quad \forall z\in\Z.
		\end{equation}

Let us now prove 
that 
	\begin{equation}\label{eq:PVD}
		\mc{O}(s_0) \cap \mathbb{P}(V) \subset D.
		\end{equation}
Since $W\in\mathrm{int}_{\mc{P}_{S}}(\Smt)$, it follows that 
		$W(s)\in \mathrm{int}_{\mc{O}(s_0)}D$ for every $s\in D$.
		In particular, according to \eqref{eq:ZbyN}, $D\cap \mathbb{P}(V)=W(D\cap \mathbb{P}(V))$ is contained in $\mathrm{int}_{\mc{O}(s_0)\cap \mathbb{P}(V)}D\cap \mathbb{P}(V)$. This means that $D\cap \mathbb{P}(V)$ is open (and closed) in the topology of $\mc{O}(s_0)\cap \mathbb{P}(V)$.
		Since, moreover,  $\mc{O}(s_0)\cap \mathbb{P}(V)$ is path-connected (Lemma~\ref{lemma:om-lim-convex}), we deduce that $D\cap \mathbb{P}(V)$ coincides with $\mc{O}(s_0)\cap \mathbb{P}(V)$. This completes the proof of 
		\eqref{eq:PVD}.

		The proof of the theorem can now be concluded. Indeed, 
		for every 
		$s\in\mc{O}(s_0)$ the set $\cl_{\mc{O}(s_0)}(\Smt(s))$ contains a $\tau$-ICS (Theorem~\ref{thm:existence}), which in turn 
		  contains  $\mc{O}(s_0)\cap\mathbb{P}(V)$ by \eqref{eq:PVD}. Hence,  $\bigcap_{s\in
			\mc{O}(s_0)}\cl_{\mc{O}(s_0)}(\Smt(s))$ is nonempty, which implies that it is a $\tau$-ICS contained in $\mc{O}(s_0)$, thanks to Lemma~\ref{lemma:DTCS}. 
Moreover, it is the unique one, since  two $\tau$-ICS with nontrivial intersection coincide (as it follows immediately from Definition~\ref{defi:DTCS}).

The last part of the statement follows from Theorem~\ref{thm:existence}.
	\end{proof}
	
	Let us then consider the $\tau$-ICS
	\[
		D=\bigcap_{s\in\mc{O}(s_0)}\cl_{\mc{O}(s_0)}(\Smt(s))\subset \mc{O}(s_0).
	\]
	By point iv) of Lemma~\ref{lemma:properties}, there exists an open and dense set (with respect to the induced orbit topology) $\mathfrak{C}\subset D$, such that
	\be\label{eq:C}
		\mathfrak{C}=\Smt(s),\quad \text{for every }s\in\mathfrak{C}.
	\ee
	As a corollary of Theorem~\ref{thm:uniq} we may now deduce the following useful result.

	\begin{cor}\label{cor:uniftime}
		Let $s_0\in\Prd$ be such that the orbit $\mc{O}(s_0)$
		is closed and let $D$ 
		be the unique $\tau$-ICS contained in $\mc{O}(s_0)$. Let $\mathfrak{C}\subset D$ be the open and dense set satisfying \eqref{eq:C}. Then, for every $s_1\in\mathfrak{C}$, there exists $T>0$ such that
		\[
			s_1\in \Ats{T}{s_2}
		\]
		for every $s_2\in\mc{O}(s_0)$.
	\end{cor}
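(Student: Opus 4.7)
The plan is to combine three ingredients: local uniformization of the reaching time into $s_1$ from a neighborhood of $s_1$ via Corollary~\ref{cor:revers}, a ``funnel'' into that neighborhood from any point of $\mc{O}(s_0)$ furnished by the characterization $D=\bigcap_{s\in\mc{O}(s_0)}\cl_{\mc{O}(s_0)}(\Smt(s))$ given by Theorem~\ref{thm:uniq}, and a compactness argument on $\mc{O}(s_0)$ to make the ``funnel'' uniform.

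First I would set up the framework. The orbit $\mc{O}(s_0)$ is compact, being closed by hypothesis inside the compact projective space $\Prd$. By Proposition~\ref{prop:orbitanalytic}(i), the family $\pi_*S$ restricted to $\mc{O}(s_0)$ satisfies the Lie algebra rank condition on $\mc{O}(s_0)$ itself, so I may treat $\mc{O}(s_0)$ as the ambient manifold when invoking the results of Section~\ref{s:DTCS}.

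Next I would produce a uniform reaching time from a neighborhood of $s_1$. Apply Corollary~\ref{cor:revers} at $s_1$ in $\mc{O}(s_0)$: there exist $T_0>d\tau$, a signal $u_0\in \mc{U}^\tau$ with $T(u_0)<T_0$, and an open neighborhood $\Omega\subset\mc{O}(s_0)$ of $s_1$ such that every $s'\in\Omega$ can be steered to the common point $q^*:=\phi(T(u_0),s_1,u_0)$ by some $w_{s'}\in\mc{U}^\tau$ with $T(w_{s'})<T_0$. Since $q^*\in\Smt(s_1)=\mathfrak{C}$, Lemma~\ref{lemma:properties}(iv) gives $\Smt(q^*)=\mathfrak{C}\ni s_1$, so I fix $v\in\mc{U}^\tau$ with $\phi(T(v),q^*,v)=s_1$. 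The concatenation $w_{s'}*v$ lies in $\mc{U}^\tau$ (closed under concatenation by its very definition) and steers any $s'\in\Omega$ to $s_1$ in time less than $T_0+T(v)$.

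Then I would reach $\Omega$ uniformly from $\mc{O}(s_0)$. For each $s_2\in\mc{O}(s_0)$, the inclusion $s_1\in D\subset\cl_{\mc{O}(s_0)}(\Smt(s_2))$ together with the openness of $\Omega$ in $\mc{O}(s_0)$ yields $u_{s_2}\in\mc{U}^\tau$ with $\phi(T(u_{s_2}),s_2,u_{s_2})\in\Omega$; continuity of the flow in the initial condition provides an open neighborhood $V_{s_2}\subset\mc{O}(s_0)$ of $s_2$ such that $\phi(T(u_{s_2}),V_{s_2},u_{s_2})\subset\Omega$. Extract a finite subcover $V_{s_2^{(1)}},\dots,V_{s_2^{(N)}}$ of the compact $\mc{O}(s_0)$ and set $T:=T_0+T(v)+\max_i T(u_{s_2^{(i)}})$. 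Every $s_2$ lies in some $V_{s_2^{(i)}}$, and the concatenation $u_{s_2^{(i)}}*w_{\phi(T(u_{s_2^{(i)}}),s_2,u_{s_2^{(i)}})}*v\in\mc{U}^\tau$ steers $s_2$ to $s_1$ in time at most $T$, which is precisely $s_1\in\Ats{T}{s_2}$.

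The main point where care is needed is the passage from ``$s_1$ reachable from each $s_2$'' (which follows easily from $s_1\in D\subset\cl(\Smt(s_2))$ plus $\Smt(s_1)=\mathfrak{C}$) to ``$s_1$ reachable in uniformly bounded time from every $s_2$''. Without Corollary~\ref{cor:revers} one would only control the approach into a neighborhood of $s_1$, not the final landing at $s_1$ itself, and a priori the time needed to hit $s_1$ exactly could blow up as the landing point approaches $s_1$; Corollary~\ref{cor:revers} rules out precisely this pathology by providing a common target $q^*$ reachable in bounded time from a full neighborhood of $s_1$, after which a single fixed signal $v$ closes the loop.
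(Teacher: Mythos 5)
Your proof is correct and follows essentially the same strategy as the paper: characterize $D$ via Theorem~\ref{thm:uniq}, use Corollary~\ref{cor:revers} to convert approximate reachability into exact landing at a specific point, and finish with a finite-subcover/compactness argument on $\mc{O}(s_0)$ for the uniform time bound. The only minor variation is that you apply Corollary~\ref{cor:revers} once at the fixed target $s_1$ (funnelling the neighborhood $\Omega$ into a common point $q^*\in\mathfrak{C}$, then closing with a single fixed signal $v$), whereas the paper applies it at each varying starting point $s_2$ and afterwards steers into $\mathfrak{C}$ and on to $s_1$; these are dual uses of the same lemma and lead to the same conclusion.
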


	\begin{proof}
		Fix $s_1\in\mathfrak{C}$ and  
		pick 
		any $s_2\in\mc{O}(s_0)$. 
		Corollary~\ref{cor:revers} implies that there exist a neighborhood $W_{s_2}$ of $s_2$ in $\mc{O}(s_0)$, a time $T'>0$, and a point $s_3\in  \mc{O}(s_0)$ such that for every $s_2'\in W_{s_2}$, 
		\[s_3\in \Ats{T'}{s_2'}.\]

		Since $
		D\subset \cl_{\mc{O}(s_0)}(\Smt(s_3))$ by Theorem~\ref{thm:uniq} and $\mathfrak{C}$ is open and dense in $D$, then there exist a signal $A'(\cdot)\in\mc{S}
^\tau$ and $s_4\in 
		\mathfrak{C}
		$ such that
		\[
			s_4=\Phi_{A'}(T(A'),0)(s_3).
		\]
		Moreover, since $\mathfrak{C}=\Smt(s)$ for every $s\in\mathfrak{C}$, then there exists another signal $A''(\cdot)\in\mc{S}^\tau$ such that
		\[
			s_1=\Phi_{A''}(T(A''),0)(s_4).
		\]
		We combine together these identities, concatenating the corresponding signals and concluding that 
		\[s_1\in \Ats{T'+T(A')+T(A'')}{s_2'}\qquad\mbox{for all }s_2'\in W_{s_2}.\]
		Finally, by extracting a finite covering of  $\mc{O}(s_0)$ by neighborhoods of the type $W_{s_2}$, we conclude the proof of the uniformity of $T$ as in the statement.
	\end{proof}

	\subsection{Periodization} \label{sec:periodization}
	
We are now ready to present a result on the uniform exponential rate $\lambda_\tau(S)$ of $\Sigma_{\tau}$ introduced in Definition~\ref{defi:unifexprate}. Let us define
	\[
		\lambda^{\rm per}_\tau(S)=\sup_{(A,x_0)\in\mc{S}_{\rm per}}\limsup_{t\to+\infty}\frac{\log(\|\Phi_A(t,0)x_0\|)}{t},
	\] 
	where $\mc{S}_{\rm per}$ consists of the pairs $(A(\cdot),x_0)\in\mc{S}^\tau_\infty\times (\R^d\setminus\{0\})$ for which there exists $T>0$ such that both $A(\cdot)$ and $t\mapsto \pi\Phi_A(t,0)x_0$ are $T$-periodic. 
	
	The main result of this section is the following.

	\begin{thm}\label{thm:unifper}
		Let $S\subset M_d(\R)$ be a bounded 
		set of matrices and let $\tau\ge 0$. Then
		\[
			\lambda^{\rm per}_\tau(S)=\lambda_\tau(S).
		\]
	\end{thm}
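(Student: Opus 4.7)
The inequality $\lambda^{\rm per}_\tau(S)\le\lambda_\tau(S)$ is immediate from the definitions, so the task is to prove the reverse. The strategy is to close up nearly extremal trajectories into trajectories with projectively periodic angular component, using the $\tau$-ICS supplied by Theorem~\ref{thm:uniq} and the uniform accessibility of Corollary~\ref{cor:uniftime}; the closing step will pay only a bounded multiplicative cost. As a preliminary, iterated application of Proposition~\ref{prop:reduction} produces an invariant filtration of $\R^d$ whose successive quotients carry irreducible restrictions of $S$ and whose maximum exponent equals $\lambda_\tau(S)$, while on the strictly lower blocks the exponent is strictly smaller. A projectively periodic trajectory on the top irreducible quotient can then be lifted to $\R^d$ with the same Lyapunov exponent (in the $\limsup$), so one may assume from now on that $S$ is irreducible.

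Under this assumption, Proposition~\ref{prop:orbitanalytic}ii yields $s_0\in\Prd$ with $\mc{O}(s_0)$ a compact embedded submanifold of $\Prd$. Theorem~\ref{thm:uniq} supplies a unique $\tau$-ICS $D\subset\mc{O}(s_0)$, and Lemma~\ref{lemma:properties}iv an open dense subset $\mathfrak{C}\subset D$ with $\Smt(s)=\mathfrak{C}$ for every $s\in\mathfrak{C}$. Fix once and for all $s_*\in\mathfrak{C}$, a lift $x_*\in\pi^{-1}(s_*)$, and set $M:=\sup_{A\in S}\|A\|$. By Corollary~\ref{cor:uniftime} there exists a uniform $T^*>0$ such that every $s\in\mc{O}(s_0)$ can be steered to $s_*$ by some element of $\mc{S}^\tau$ of length at most $T^*$.

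Given $\epsilon>0$, the central step is to produce a signal $B\in\mc{S}^\tau_\infty$ and times $t_n\to+\infty$ with
\[\frac{\log\|\Phi_B(t_n,0)x_*\|}{t_n}\longrightarrow\lambda_\tau(S)-\epsilon\]
(see the obstacle below). Since $\pi(x_*)\in\mc{O}(s_0)$ and $\Phi_B(t,0)\in\mc{P}_S$, we have $\pi(\Phi_B(t_n,0)x_*)\in\mc{O}(s_0)$ for every $n$, so Corollary~\ref{cor:uniftime} provides $C_n\in\mc{S}^\tau$ of length $T_n\le T^*$ with $\pi(\Phi_{C_n}(T_n,0)\Phi_B(t_n,0)x_*)=s_*$. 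The concatenation $B|_{[0,t_n]}*C_n$, extended periodically, belongs to $\mc{S}^\tau_\infty$ (the dwell-time at junctions being enforced by shifting $t_n$ by at most $\tau$ so that it lands at the end of a constant piece of $B$), and the trajectory it generates from $x_*$ is projectively periodic of period $t_n+T_n$. Writing $\Phi_{B*C_n}(t_n+T_n,0)x_*=\mu_n x_*$ and using $|\mu_n|\ge e^{-MT^*}\|\Phi_B(t_n,0)x_*\|/\|x_*\|$, the Lyapunov exponent along this periodic trajectory is bounded below by
\[\frac{\log\|\Phi_B(t_n,0)x_*\|-MT^*-\log\|x_*\|}{t_n+T^*}\underset{n\to\infty}{\longrightarrow}\lambda_\tau(S)-\epsilon.\]
Letting $\epsilon\to 0$ completes the proof.

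The principal obstacle is justifying the existence of a nearly extremal signal $B$ for the fixed initial vector $x_*$: identity \eqref{two-exponents} only supplies a signal realizing the growth rate through a variable sequence of optimal unit vectors $y_n$, whose angular limits need not lie in $\mc{O}(s_0)$. Replacing these variable starting vectors by the fixed $x_*$ rests on combining irreducibility (Proposition~\ref{prop:orbitanalytic}iii), the compactness of $\mc{O}(s_0)$, and the dense reachability $\mathfrak{C}=\Smt(s_*)$: given any $y_n$ approximating the operator-norm growth at time $t_n$, one can produce a signal in $\Sft$ steering $s_*$ close to $\pi(y_n)$ in bounded time, and the corresponding bounded multiplicative factor is absorbed in the $\limsup$. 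Making this replacement fully rigorous, together with the routine but delicate bookkeeping of the dwell-time constraint at the concatenation junctions, is the technical heart of the argument.
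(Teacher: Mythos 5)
Your high-level strategy—reduce to the irreducible case, pass to a closed orbit $\mc{O}(s_0)$, use the unique $\tau$-ICS $D$ and the uniform return time from Corollary~\ref{cor:uniftime} to close up a nearly extremal trajectory, absorbing the closing cost in the $\limsup$—is the paper's strategy. But you have correctly located the obstacle and then not resolved it, and your suggested resolution does not work.

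The obstacle is: \eqref{two-exponents} controls the operator norm $\|\Phi_A(t,0)\|$, not the growth along your fixed $x_*$. Your proposed fix is to ``steer $s_*$ close to $\pi(y_n)$ in bounded time,'' where $y_n$ realizes the operator norm at time $t_n$. This fails for two reasons. First, as you yourself note, $\pi(y_n)$ need not lie in $\mc{O}(s_0)$, and $\Smt(s_*)\subset\mc{O}(s_0)$, so it is simply not reachable from $s_*$. Second, even inside the orbit, $\Smt(s_*)=\mathfrak{C}$, an open dense subset of $D$, which is in general a \emph{proper} subset of $\mc{O}(s_0)$; Corollary~\ref{cor:uniftime} gives bounded-time steering \emph{into} $\mathfrak{C}$ from anywhere in $\mc{O}(s_0)$, not bounded-time steering \emph{from} $s_*$ to arbitrary points. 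So this direction of accessibility is unavailable.

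The idea you are missing is the finite-spanning-set trick. Since $\mc{O}(s_0)$ is not contained in any projective hyperplane (Proposition~\ref{prop:orbitanalytic}iii) and $\mathfrak{C}$ is open in it, one can choose $d$ \emph{linearly independent} unit vectors $v_1,\dots,v_d$ with $\pi(v_i)\in\mathfrak{C}$ for all $i$. Then $B\mapsto\max_i\|Bv_i\|$ is a norm on $M_d(\R)$, hence $\max_i\|\Phi_A(T,0)v_i\|\ge C\|\Phi_A(T,0)\|$ for some $C>0$ independent of $A$ and $T$. For each $\varepsilon$ one then finds a signal $A\in\mc{S}^\tau$ with $T(A)>1/\varepsilon$ and \emph{some} index $l$ (depending on $\varepsilon$) so that the growth along $v_l$ is within $\varepsilon$ of $\lambda_\tau(S)$. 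Corollary~\ref{cor:uniftime} (applied at most $d$ times, once per $v_l$, with the worst $T$ retained) then closes the loop back to $\pi(v_l)$ in uniformly bounded time. Note that, contrary to what your write-up suggests, one does \emph{not} need a single fixed vector $x_*$ that achieves the rate along a single signal $B$; the index $l$ is allowed to vary with $\varepsilon$. Finally, your treatment of the reduction to the irreducible case is too quick: lifting a projectively periodic trajectory of $S_{22}$ to one of $S|_E$ is not automatic, and requires choosing $x_1$ so that $(\mu\,\mathrm{Id}-\Phi_{A_{11}}(T(A),0))x_1$ equals a prescribed vector, which is possible because, by Proposition~\ref{prop:reduction}, $\log|\mu|>\lambda_\tau(S_{11})$ forces $\mu$ to lie off the spectrum of $\Phi_{A_{11}}(T(A),0)$.
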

	
	\begin{proof}
		It is sufficient to prove that $\lambda_\tau(S)\leq \lambda^{\rm per}_\tau(S)$, the other inequality being obvious by definition. 
	
	Let us first show that it is enough to prove the theorem when $S$ is irreducible. 
	By Proposition~\ref{prop:reduction}, there exists an invariant subspace $E$ of $\R^d$ 
	such that, up to a linear change of coordinates, 
for every $A\in {S}$, 
		\[ 
			A|_{E}=\left(   \begin{array}{cc}
									A_{11} & A_{12} \\
									0      & A_{22} 
																  \end{array}	
					 \right),
		\]
	with  $S_{22}=\{A_{22}\mid A\in S\}$  irreducible and 
	$\lambda_\tau(S)=\lambda_\tau(S_{22})$.  
	We should prove that $\lambda_\tau(S)\le \lambda^{\rm per}_\tau(S)$ knowing that 
	$\lambda^{\rm per}_\tau(S_{22})=\lambda_\tau(S_{22})$.
		Notice that 
		when the block $A_{11}$ has dimension $0\times 0$, then $\lambda^{\rm per}_\tau(S_{22})\le \lambda^{\rm per}_\tau(S)$ and the conclusion follows.
According to the last part of the statement of Proposition~\ref{prop:reduction}, we can then assume that
\[\lambda^{\rm per}_\tau(S_{11})\le \lambda_\tau(S_{11}) <\lambda_\tau(S_{22})=\lambda^{\rm per}_\tau(S_{22}),\]
where $S_{11}=\{A_{11}\mid A\in S\}$. 		 
	Set $S|_E=\{A|_{E}\mid A\in S\}$. By invariance of $E$ we have that 
	 $\lambda^{\rm per}_\tau(S|_E)\le \lambda^{\rm per}_\tau(S)$. We are concluding the argument for the reduction to the irreducible case by showing that  $\lambda^{\rm per}_\tau(S_{22})\le \lambda^{\rm per}_\tau(S|_E)$.

The tricky point is to show that a periodic-in-projection trajectory of $(\Sigma^\tau_{S_{22}})$ can be lifted to a periodic-in-projection trajectory of $(\Sigma^\tau_{S|_E})$. 
Let $A\in \mc{S}^\tau$, $\mu\in \R$, and $x_2$ be such that  
\[ \Phi_{A_{22}}(T(A),0)x_2=\mu x_2,\qquad \log|\mu|>\lambda^{\rm per}_\tau(S_{11}).\]
Then, for every $x_1$ such that $x=(x_1,x_2)$ is in $E$, 
\[ \Phi_{A|_{E}}(T(A),0)x=
\begin{pmatrix}
\Phi_{A_{11}}(T(A),0)x_1+\int_0^{T(A)}
\Phi_{A_{11}}(T(A),s)A_{12}(s)
\Phi_{A_{22}}(s,0)x_2 ds\\ \mu x_2\end{pmatrix}\]
and $\mu$ is not in the spectrum of $\Phi_{A_{11}}(T(A),0)$. 
The lift is then obtained by choosing as $x_1$ the solution to
 \[
 \mu x_1= 
 \Phi_{A_{11}}(T(A),0)x_1+\int_0^{T(A)} 
 \Phi_{A_{11}}(T(A),s)A_{12}(s)
 \Phi_{A_{22}}(s,0)x_2 ds.
 \]
This concludes the proof of the reduction to the case where $S$ is irreducible.
		
		Equality \eqref{two-exponents} implies that for every $\varepsilon>0$ there exist $A(\cdot)\in \mc{S}^\tau_\infty$ and $\ov{t}=\ov{t}(\varepsilon)>0$ such that 
		\begin{equation}\label{-eps}
			\frac{\log(\| \Phi_A(t,0)  \|)}{t}>\lambda_\tau(S)-\varepsilon, \qquad\mbox{for every $t\ge \ov{t}$}.
		\end{equation}

 Let $s_0\in\Prd$ be such that $\mc{O}(s_0)$ is closed, and $D$ be the unique $\tau$-ICS contained in $\mc{O}(s_0)$ given by Theorem~\ref{thm:uniq}.  
		As recalled above, there exists an open set $\mathfrak{C}$ for the topology of $\mc{O}(s_0)$ such that $\cl_{\mc{O}(s_0)}(\mathfrak{C})=D$. 
		We claim that we can find $d$ linearly independent unit vectors $v_1,\dots,v_d\in\R^d$ such that their projections $s_i=\pi(v_i)$ belong to $\mathfrak{C}$ for every $i=1,\dots,d$.
		Indeed, since $\mc{O}(s_0)$ is an analytic orbit and because of the openness of $\mathfrak{C}$, then
		\[ {\rm span}\{x\in \R^d\setminus\{0\}\mid \pi x\in \mathfrak{C}\}={\rm span}\{x\in \R^d\setminus\{0\}\mid \pi x\in \mc{O}(s_0)\}=\R^d,\]
		where the last equality follows from Proposition~\ref{prop:orbitanalytic}, point iii).

		Let then $v_1,\dots,v_d\in\R^d$ be chosen as above, and notice that the map $B\mapsto\max_{i=1,\dots,d}\| Bv_i \|$ is a norm on $M_d(\R)$. 
		Hence, there exists a constant $C>0$ such that $\max_{i=1,\dots,d}\| Bv_i \|\ge C\|B\|$ for every $B\in M_d(\R)$. 
		Using this property and \eqref{-eps}, we deduce that for every $\varepsilon>0$ there exist 
		$A(\cdot)\in \mc{S}^\tau$ and $l\in\{ 1,\dots, d \}$ 
		such that $T(A)>1/\varepsilon$ and 
		\[
			\frac{\log(\| \Phi_A(T(A),0)v_l  \|)}{T(A)}>\lambda_\tau(S)-\varepsilon.
					\]
		By Corollary~\ref{cor:uniftime} there exists a time $T>0$ independent of $\varepsilon$ and a signal 
		$A'\in \mc{S}^\tau$ (possibly depending on $\varepsilon$) 	such that  $T(A')<T$ and 
		\[
			v_l=\Phi_{A'}(T(A'),0)
			\Phi_A(T(A),0)v_l.
		\]
		Let then $A''(\cdot)$ be the 
		$(T(A)+T(A'))$-periodic signal contained in $\mc{S}_{\infty}^\tau$ defined by the concatenation of $A$ and $A'$ on each period. By construction, the pair $(A'',v_l)$ is in $\mc{S}^\tau_{\rm per}$.  Moreover, if  $\varepsilon\ll 1$ then $T(A)\gg T>T(A')$, which implies that 
		\[
			\frac{\log( \|\Phi_{A''}(T(A)+T(A'),0)v_l\| )}{T(A)+T(A')}>\lambda_\tau(S)-2\varepsilon.
		\]
This readily leads to the conclusion by the arbitrariness of $\varepsilon$.
	\end{proof}

	\section{Piecewise deterministic dwell-time random processes}
	\label{s:stoch}

	In this section we apply the theory developed so far to a class of piecewise deterministic dwell-time random processes. Inspired by \cite{BenaimColonius,BenaimLeborgne}, we show how $\tau$-ICS are naturally related to the support of the invariant measures associated with such processes.
	
	\subsection{General constructions}
	
	Let us consider a compact manifold $M$, a finite set of indices $E=\{ 1,\dots, m \}$, $m\ge 2$, and a family $\mc{F}=\{ X_i\mid i\in E \}$ of smooth vector fields on $M$. Moreover, let $\mc{Q}:M\to M_m(\R)$, $\mc{Q}:q\mapsto (\mc{Q}(q,i,j))_{i,j\in E}$ be a continuous map such that $\mc{Q}(q)$ is a  Markov transition matrix  and $\mc{Q}(q,i,j)>0=\mc{Q}(q,i,i)$ for every $q\in M$ and every $i,j\in E$, $i\ne j$.
	
	Given $\lambda>0$ and $\tau> 0$, let $(U_i)_{i\ge 1}$ be a sequence of i.i.d. random variables with real positive values, whose density $f=f_{U_i}:(0,\infty)\to [0,\infty)$ is exponential of intensity $\lambda$ up to a right-shift by $\tau$, that is,
	\be\label{eq:density}
		f(t)=\lambda e^{-\lambda(t-\tau)}\mathbf{1}_{\{t\ge \tau\}}.
	\ee
	Let $0=T_0<T_1<\dots<T_n<\dots$ be the sequence of random points in $[0,+\infty)$
	defined  by $T_i=U_1+\dots+U_i$, $i\ge 1$.

	Finally,  let $(N_t)_{t\ge 0}$ be the counting process associated with $(T_n)_{n\ge 0}$, for which we have the standard relation
	\[
		\PP(N_t\ge n)=\PP(T_n\le t),\quad t\in\R,\quad n\in\N.
	\]
Notice that, almost surely, 
\be\label{Nt/t}\frac{T_n}n\to \tau+\frac1\lambda=\frac{\tau \lambda+1}\lambda,\qquad \frac{N_t}t\to \frac\lambda{\tau \lambda+1}.
\ee 
	
	Consider now a random variable $Z_0$ on $M\times E$, independent of the process $(U_i)_{i\in\N}$, and construct $Z_n=(Q_n,L_n)$ on $M\times E$ inductively by
	\begin{align}
		Q_{n+1}&=e^{U_{n+1}X_{L_n}}(Q_n),\\
		\PP( L_{n+1}&=j|Q_{n+1},L_n=i )=\mc{Q}(Q_{n+1}, i,j ).
	\end{align}
The process $Z_n$ has, by construction, the Markov property.

\begin{remark}
In analogy with \cite{BenaimLeborgne}, one could further generalize the above construction by allowing $\lambda$ to be a uniformly positive function depending on the indices $L_n$ and $L_{n+1}$ and on the point $Q_n$. 
Another possible type of generalization, following \cite{ColoniusMazanti}, would consist in allowing more general probability densities than $f$, supported in $[\tau,\infty)$.
We prefer to restrict the framework in order to keep notations reasonably simple. 
The constraint $\mc{Q}(q,i,i)=0$ reflects the assumption that the distribution of the duration of the bangs follows a shifted exponential law.
\end{remark}

We find it useful to introduce the continuous-time process $(Y_t)_{t\ge 0}$ obtained by interpolation of $Q_n$, as follows, 
	\be\label{eq:proc}
		Y_t=e^{(t-T_n)X_{L_n}}(Q_n)\quad \text{for every } t\in [T_n,T_{n+1}).
	\ee

	\subsection{Invariant measures}

	By a classical result of Krylov and Bogolyubov \cite{KriBog}  (see also, for instance, \cite{Duflo1997}), 
compactness of $M$ and theFeller property
		for the process $(Z_n)_{n\in\N}$ 
	imply that	 there exists at least one invariant measure for the process $(Z_n)_{n\in\N}$ described in the previous section.

		For every $(q,i)\in M\times E$ and every measurable set $A\subset M\times E$, we define the $n$-step transition probability from $(q,i)$ to $A$ as
		\begin{equation}\label{defi:transprob}
			P_n((q,i),A)=\E [ Z_n\in A| Z_0=(q,i) ].
		\end{equation}
		
	For every invariant measure $\mu$ and every measurable set $A\subset M\times E$, we then have
	\be\label{eq:meas}
		\mu(A)=\int_{M\times E}P_n((q,i),A)d\mu(q,i)=\int_{\mathrm{supp}\mu}P_n((q,i),A)d\mu(q,i),\quad \text{for every }n\ge 1.
	\ee
	
	Following \cite{BenaimColonius}, we define, for each $n\in\N$, the sets
	\begin{align*}
		\mathbf{T}_n&=\{ (\ib,\ub)=((i_0,\dots,i_n),(u_1,\dots,u_n))\in E^{n+1}\times [\tau,+\infty)^n \mid i_{k-1}\ne i_{k}\mbox{ for }k=1,\dots,n\},\\
		 \mathbf{T}_n^{ij}&=\{ (\ib,\ub)\in\mathbf{T}_n\mid i_0=i,i_n=j \}.
	\end{align*}
	The trajectory $\phi(q,t,\ib,\ub)$, induced by a pair $(\ib,\ub)\in\mathbf{T}_n$ and starting at $q\in M$ is then determined as follows: let $t_0=0$, $t_k=t_{k-1}+u_k$ for $1\leq k\leq n$ and set $q_0=q$, $q_{k}=e^{u_kX_{i_{k-1}}}(q_{k-1})$ for $1\leq k\leq n$. Then
	\be\label{eq:phi}
		\phi(q,t,\ib,\ub)=\begin{cases}
			q, \quad t=0,\\
			e^{(t-t_{k-1})X_{i_{k-1}}}(q_{k-1}),\quad t_{k-1}<t\leq t_k,\\
			e^{(t-t_n)X_{i_n}}(q_n),\quad t>t_n.
		\end{cases}
	\ee
	
 Notice that $\phi(q,t,\ib,\ub)$ is in fact a trajectory of \eqref{eq:controlsys} driven by a piecewise constant control  
 with dwell time $\tau$, taking $U=E$. 
	
	\begin{lemma}\label{lemma:tubelemma}
		Let $(\ib,\ub)\in\mathbf{T}_n$. Then, for every $q\in M$, $T\ge 0$, and $\delta>0$,
		there exist $\beta>0$ and a neighborhood $W_q$ of $q$ such that 
		\[
			\PP\left( \sup_{0\le t\le T}\| Y_t-\phi(z,t,\ib,\ub) \|\le\delta\mid Z_0=(z,i_0) \right)>\beta\qquad\forall z\in W_q,
		\]
		where $(Y_t)_{t\ge0}$ is defined as in \eqref{eq:proc}.
	\end{lemma}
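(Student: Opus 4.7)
The plan is to exhibit an event of positive probability, bounded below uniformly for $z$ in a small neighborhood of $q$, on which the random trajectory $(Y_t)_{t\in[0,T]}$ shadows the deterministic curve $\phi(z,t,\ib,\ub)$; and then to convert this shadowing into the desired $\delta$-closeness via continuous dependence of flows on initial data and on the switching times. Fix a small $\eta>0$, to be specified at the end in terms of $\delta$ and the fixed data, and let $\Omega_\eta$ be the event on which simultaneously (a) $L_k=i_k$ for every $k=1,\dots,n$; (b) $U_k\in[u_k-\eta,u_k+\eta]\cap[\tau,+\infty)$ for every $k=1,\dots,n$; and (c) if $t_n:=\sum_{k=1}^n u_k<T$, also $U_{n+1}>T-T_n$. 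On $\Omega_\eta$ the process $Y_t$ traverses precisely the modes $i_0,i_1,\dots,i_n$ during $[0,T]$, with actual switching times $\widetilde T_k=U_1+\dots+U_k$ satisfying $|\widetilde T_k-t_k|\le k\eta$.

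I would first bound $\PP(\Omega_\eta\mid Z_0=(z,i_0))$ from below uniformly in $z$. By continuity, positivity of $\mc{Q}$ on the off-diagonal, and compactness of $M$, there is $c>0$ with $\mc{Q}(q,i,j)\ge c$ for every $q\in M$ and every $i\ne j$; iterated conditioning along the natural filtration shows that (a) contributes a factor at least $c^n$, independent of $z$. Since the density $f$ in \eqref{eq:density} is strictly positive on $(\tau,+\infty)$ and the $U_k$ are i.i.d.\ with density $f$, condition (b) contributes a further positive factor depending only on $\eta$ and $\ub$, with a little care when some $u_k$ equals $\tau$ (in which case $[u_k-\eta,u_k+\eta]\cap[\tau,+\infty)$ degenerates to $[\tau,\tau+\eta]$, still of positive $f$-measure). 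Condition (c) is either automatic (if $T-\widetilde T_n\le\tau$, given (a) and (b)) or has probability bounded below by a positive constant of the form $e^{-\lambda(T-t_n+n\eta-\tau)}$. Multiplying these uniform lower bounds yields a constant $\beta>0$ independent of $z\in M$.

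The last step is purely deterministic. By smoothness of each $X_i$ and compactness of $M$, the flows $e^{sX_i}$ have globally bounded Lipschitz constants on any fixed time window; an induction on $k$ then produces constants $C_k>0$ such that on $\Omega_\eta$ one has $\|Q_k-q_k\|\le C_k(\|z-q\|+\eta)$. For every $t\in[0,T]$, either $t$ lies in an interval $[\widetilde T_{k-1},\widetilde T_k)\cap[t_{k-1},t_k)$ on which $Y_t$ and $\phi(z,t,\ib,\ub)$ are governed by the same flow $e^{\cdot\, X_{i_{k-1}}}$, in which case their distance is $O(\|Q_{k-1}-q_{k-1}\|+|\widetilde T_{k-1}-t_{k-1}|)=O(\|z-q\|+\eta)$; or $t$ falls into one of the $O(n)$ ``transition windows'' of total length at most $n\eta$, where the distance is bounded by $\eta\max_{i\in E}\sup_M\|X_i\|$. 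Choosing first $\eta$ small and then $W_q$ a sufficiently small neighborhood of $q$ makes the supremum smaller than $\delta$ on $\Omega_\eta$, uniformly for $z\in W_q$, which proves the lemma.

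The main technical care is in the probability lower bound, because of the $\tau$-shift in $f$: the boundary cases $u_k=\tau$ and $T-\widetilde T_n\in(0,\tau]$ must be treated separately so that the estimates do not degenerate. Apart from this bookkeeping, the argument is the standard ``tube lemma'' combining positivity of the Markov kernel with continuous dependence of ODE solutions on initial data and on the switching schedule.
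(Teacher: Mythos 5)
Your argument is correct and follows the same tube-lemma strategy that the paper invokes by reference to \cite[Lemma 3.2]{BenaimLeborgne}, only sketching the needed modifications. The two ingredients the paper highlights---uniform positivity of the off-diagonal entries of $\mc{Q}$, and the positivity of the shifted-exponential probability of landing near a prescribed jump time (the paper's estimate \eqref{eq:estf})---are exactly what you spell out in detail, together with the standard continuous-dependence estimate on the shadowed flows.
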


	\begin{proof}
		The proof goes along the same lines as \cite[Lemma 3.2]{BenaimLeborgne}, therefore we only point out the necessary modifications. 

		The proof that the deterministic trajectory $\phi(z,t,\ib,\ub)$ can be approximated by stochastic trajectories $Y_t$ is much simpler here than in  \cite{BenaimLeborgne}, since we are assuming that all off-diagonal entries of the Markov transitioning matrix $\mc{Q}$ are strictly positive.
		Moreover, a simple computation shows that, if $W$ is a random variable whose density is as in \eqref{eq:density}, and $w\in [\tau,+\infty)$, then
		\be\label{eq:estf}
			\PP(|W-w|\leq \delta)=e^{-\lambda( \max\{ w-\delta-\tau,0 \} )}-e^{\lambda(w+\delta-\tau)}>0.
		\ee
This allows to conclude as in the aforementioned reference.
	\end{proof}

		\begin{lemma}\label{lemma:secondincl}
		Let $q\in M$, $i\in E$, $y\in\ov{\Sft(e^{\tau X_i}q)}$, and let $W_y\subset M$ be a neighborhood of $y$. Then there exist 
		$n\in \N$, $\beta>0$, $j\in E$, and
		a neighborhood $W_q\subset M$ of $q$ such that  
		\[
			P_{n}((z,i), W_y\times\{ j \} )>\beta\qquad \forall z\in W_q.
			\]
	\end{lemma}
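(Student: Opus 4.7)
The plan is to exhibit a deterministic dwell-time trajectory of \eqref{eq:controlsys} starting at $q$ with initial mode $i$ whose endpoint after $n$ switches lies inside $W_y$ and whose $n$-th label equals $j$, and then to invoke the tube Lemma~\ref{lemma:tubelemma} to show that the random process approximates this trajectory with uniformly positive probability on a neighborhood of $q$.

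First, since $y\in \overline{\Sft(e^{\tau X_i}q)}$ and $W_y$ is open, we pick $g\in\Sft$ with $g(e^{\tau X_i}q)\in W_y$. Using the characterization \eqref{eq:semigroup} and merging consecutive factors based on the same vector field (their times add up and remain $\ge\tau$), we rewrite
\[
g\circ e^{\tau X_i}=e^{u_n X_{i_{n-1}}}\circ\cdots\circ e^{u_1 X_{i_0}},\qquad u_k\ge\tau,\ \ i_0=i,\ \ i_{k-1}\ne i_k,
\]
for some $n\ge 1$. Since $|E|\ge 2$ we pick any $j\in E\setminus\{i_{n-1}\}$, set $\ib=(i_0,\dots,i_{n-1},j)$ and $\ub=(u_1,\dots,u_n)$, so that $(\ib,\ub)\in\mathbf{T}_n$ and $\phi(q,t_n,\ib,\ub)=g(e^{\tau X_i}q)\in W_y$, with $t_n=\sum_{k=1}^n u_k$.

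By continuity of the flows and openness of $W_y$, there exist $\delta>0$ and a neighborhood $W_q'$ of $q$ such that any continuous curve $\delta$-close in sup norm to $\phi(z,\cdot,\ib,\ub)$ on $[0,t_n]$ ends in $W_y$, for every $z\in W_q'$. Applying Lemma~\ref{lemma:tubelemma} to the data $(\ib,\ub)$, $q$, $T=t_n$, and threshold $\delta$, we obtain $\beta>0$ and a neighborhood $W_q\subset W_q'$ of $q$ such that
\[
\PP\bigl(\sup_{0\le t\le t_n}\|Y_t-\phi(z,t,\ib,\ub)\|\le\delta \mid Z_0=(z,i_0)\bigr)>\beta,\qquad \forall z\in W_q.
\]
Inspecting the construction behind Lemma~\ref{lemma:tubelemma}, namely the estimate \eqref{eq:estf} on the jump times together with the strict positivity of the off-diagonal entries of $\mc{Q}$, the tube event above is realised by prescribing $L_k=i_k$ and $|U_k-u_k|\le\delta'$ for a sufficiently small $\delta'\le\delta$; this forces $L_n=j$ and $Q_n=Y_{T_n}\in W_y$, whence $P_n((z,i),W_y\times\{j\})>\beta$ on $W_q$, as required.

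The main delicate point is to pass from control of $Y_t$ in sup norm to the joint event $\{Q_n\in W_y,\ L_n=j\}$: this requires choosing $\delta$ small enough that no alternate sequence of labels could produce a $\delta$-tube around the nominal trajectory on $[0,t_n]$ (a local injectivity argument already present in the proof of Lemma~\ref{lemma:tubelemma}), together with a uniform lower bound on the product of the transition probabilities $\mc{Q}(Q_k,i_{k-1},i_k)$ obtained by continuity of $\mc{Q}$ on the compact manifold $M$.
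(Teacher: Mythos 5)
Your proof is correct and takes the same route as the paper's, which simply remarks that the claim ``is a direct consequence of Lemma~\ref{lemma:tubelemma}, once we notice that, by assumption, there exist $y_0\in \mathrm{int}(W_y)$ and $g^i\in\Sft$ such that $y_0=g^i(e^{\tau X_i}q)$.'' You have filled in what the paper leaves implicit: merging consecutive flows so as to land in $\mathbf{T}_n$ with $i_0=i$, appending an arbitrary $j\ne i_{n-1}$, and---crucially---noting that the statement of Lemma~\ref{lemma:tubelemma} alone controls the path $Y_t$ but not the discrete pair $(Q_n,L_n)$, so one must pass through the event $\{L_k=i_k,\ |U_k-u_k|\le\delta'\}$ used inside that lemma's proof, which is contained in the tube event and does force $L_n=j$ and $Q_n=Y_{T_n}\in W_y$. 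This is a legitimate and necessary clarification of the paper's terse argument rather than a different approach.
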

	
	\begin{proof}
The lemma is a direct consequence of Lemma~\ref{lemma:tubelemma}, once we notice that, by assumption, there exist $y_0\in \mathrm{int}(W_y)$ and $g^i\in\Sft$ such that $y_0=g^i(e^{\tau X_i}q)$. 
			\end{proof}

	\begin{lemma}\label{lemma:opencover}

Assume 
that $M$ is compact and that $\mc{F}$ satisfies assumption \eqref{eq:mainhyp}.	
 Let 
		 $D_1,\dots,D_l$ be the $\tau$-ICS associated with \eqref{eq:controlsys}. 
		Then, for every $q\in M$, there exist 
		$n\in \N$, $\beta>0$, and 
		a neighborhood $W_q\subset M$ of $q$ such that, for every $i\in E$ and every $z\in W_{q}$,
		\[
			P_{n}\left((z,i),\bigcup_{r=1}^l\mathrm{int}(D_r)\times E\right)>\beta.
		\]
	\end{lemma}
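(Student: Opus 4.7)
The plan is to piece together Theorem~\ref{thm:existence}, Lemma~\ref{lemma:properties}, and the local estimate of Lemma~\ref{lemma:secondincl}, using that the process $(Z_n)$ takes each of its jumps along a flow of an element of $\mc{F}$ for a time $U_n\ge \tau$, and therefore moves by an element of $\Sft$ at every step, almost surely.

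First, I would handle each initial mode $i\in E$ separately. For fixed $i$, apply Theorem~\ref{thm:existence} to the point $e^{\tau X_i}(q)$ to produce a $\tau$-ICS $D_{r(i)}\subset \ov{\Sft(e^{\tau X_i}(q))}$; under \eqref{eq:mainhyp}, $\mathrm{int}(D_{r(i)})$ is nonempty by Proposition~\ref{prop:notempty}, so I can pick  $y_i\in\mathrm{int}(D_{r(i)})$ and a neighborhood $W_{y_i}\subset \mathrm{int}(D_{r(i)})$. Since $y_i\in \ov{\Sft(e^{\tau X_i}(q))}$, Lemma~\ref{lemma:secondincl} provides $n_i\in\N$, $\beta_i>0$, $j_i\in E$, and a neighborhood $W_{q,i}$ of $q$ such that
\[
P_{n_i}((z,i),W_{y_i}\times\{j_i\})>\beta_i\qquad\forall z\in W_{q,i}.
\]

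Next, I would use the invariance of $\mathrm{int}(D_{r(i)})$ to propagate this bound forward in time. By Lemma~\ref{lemma:properties}(ii), $\mathrm{int}(D_{r(i)})$ is dwell-time positively invariant, i.e.\ $\Sft(y)\subset \mathrm{int}(D_{r(i)})$ for every $y\in \mathrm{int}(D_{r(i)})$. Because $U_k\ge\tau$ almost surely (the density $f$ is supported in $[\tau,\infty)$), each individual jump $Q_k\mapsto Q_{k+1}=e^{U_{k+1}X_{L_k}}(Q_k)$ is an element of $\Sft$ applied to $Q_k$. Hence, conditionally on $Q_{n_i}\in\mathrm{int}(D_{r(i)})$, one has $Q_n\in\mathrm{int}(D_{r(i)})$ for all $n\ge n_i$, almost surely. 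Setting $n=\max_{i\in E} n_i$ and applying the Markov property at time $n_i$, I deduce
\[
P_n((z,i),\mathrm{int}(D_{r(i)})\times E)\ge P_{n_i}((z,i),W_{y_i}\times\{j_i\})>\beta_i\qquad\forall z\in W_{q,i}.
\]

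Finally, since $E$ is finite, I would conclude by setting $W_q=\bigcap_{i\in E}W_{q,i}$ and $\beta=\min_{i\in E}\beta_i$; then for all $z\in W_q$ and all $i\in E$,
\[
P_n\bigl((z,i),\textstyle\bigcup_{r=1}^l\mathrm{int}(D_r)\times E\bigr)\ge P_n((z,i),\mathrm{int}(D_{r(i)})\times E)>\beta.
\]
There is no single hard step here: the potential subtlety is the ``stays in $\mathrm{int}(D_{r(i)})$'' argument, which has to invoke both the shifted-exponential support of $f$ (so that every jump length is at least $\tau$) and the positive invariance of $\mathrm{int}(D_{r(i)})$ under $\Sft$, but both ingredients are already in hand.
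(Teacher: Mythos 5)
Your proof is correct and follows essentially the same route as the paper: Theorem~\ref{thm:existence} applied to $e^{\tau X_i}(q)$, the local estimate of Lemma~\ref{lemma:secondincl}, the dwell-time positive invariance of $\mathrm{int}(D_r)$ from Lemma~\ref{lemma:properties}(ii) to push the estimate forward to a common $n$, and a finite intersection/min/max over $i\in E$. You merely spell out more explicitly the observation that each jump of the chain has length $U_k\ge\tau$ almost surely, hence acts by an element of $\Sft$; the paper states this propagation step more tersely but it is the same argument.
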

	
	\begin{proof}
		Let $q\in M$ and $i\in E$. By Theorem~\ref{thm:existence}, there exists $1\le r\le l$ such that $D_r\subset \ov{\Sft(e^{\tau X_i}(q))}$ and $D_r$ has nonempty interior. 
		Lemma~\ref{lemma:secondincl} then implies that there exist $n^{i}\in\N$, $\beta^i>0$, and 
		a neighborhood $W_q^i\subset M$ of $q$ such that
		\[
			P_{n^i}\left((z,i),\bigcup_{r=1}^l\mathrm{int}(D_r)\times E\right)>\beta^i\qquad \forall z\in W^i_{q}.
		\]
		Observe that $n^{i}$ can be made uniform with respect to $i\in E$. Indeed, due to point (ii) of Lemma~\ref{lemma:properties}, each set $\mathrm{int}(D_r)$ is dwell-time positively invariant, and therefore 
		\[
				P_{n}\left((z,i),\bigcup_{r=1}^l\mathrm{int}(D_r)\times E\right)\ge 
				P_{n^i}\left((z,i),\bigcup_{r=1}^l\mathrm{int}(D_r)\times E\right)\qquad \forall n\ge n^i.\]
		The proof is then concluded taking
		$n=\max_{i\in E}n^i$, $\beta=\min_{i\in E}\beta^i$, and $W_q=\cap_{i\in E}W_q^i$.
	\end{proof}

The following proposition is an adaptation of \cite[Theorem 4.5]{BenaimColonius}.	
	\begin{prop}\label{prop:firstest}
	Assume 
that $M$ is compact and that $\mc{F}$ satisfies assumption \eqref{eq:mainhyp}.	
 Let $D_1,\dots,D_r$ be the $\tau$-ICS associated with \eqref{eq:controlsys}.
		Then, for every invariant measure $\mu$ of $(Z_n)_{n\in\N}$, one has that 
		\[
			\mathrm{supp}\mu\subset \bigcup_{r=1}^lD_r\times E.
		\]
	\end{prop}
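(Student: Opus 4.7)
The plan is to argue that the ``good'' set $A:=\bigcup_{r=1}^l\mathrm{int}(D_r)\times E$ is stochastically positively invariant for the process $(Z_n)$ and that its complement $B:=(M\times E)\setminus A$ is reached from any starting point with positive probability in a uniformly bounded number of steps; then iterate to force $\mu(B)=0$.

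First I would verify that $A$ is positively invariant for $(Z_n)$. If $(Q_n,L_n)=(q,i)$ with $q\in\mathrm{int}(D_r)$, then $Q_{n+1}=e^{U_{n+1}X_i}(q)$ and, since $U_{n+1}\ge\tau$ almost surely, the diffeomorphism $e^{U_{n+1}X_i}$ belongs to $\Sft$. By Lemma~\ref{lemma:properties}(ii) the open set $\mathrm{int}(D_r)$ is dwell-time positively invariant, so $Q_{n+1}\in\mathrm{int}(D_r)$ almost surely. Hence $P_n((z,i),A)=1$ for every $(z,i)\in A$ and every $n\ge 1$, and in particular $n\mapsto P_n((z,i),A)$ is non-decreasing for each fixed $(z,i)\in M\times E$.

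Next I would extract uniform constants. Lemma~\ref{lemma:opencover} furnishes, for every $q\in M$, a neighborhood $W_q$, an integer $n_q$, and a number $\beta_q>0$ such that $P_{n_q}((z,i),A)>\beta_q$ for all $z\in W_q$ and $i\in E$. Compactness of $M$ yields a finite subcover $W_{q_1},\dots,W_{q_N}$; by the monotonicity observed in the previous step I can replace each $n_{q_k}$ by $n_0:=\max_k n_{q_k}$ to obtain a single $\beta_0:=\min_k\beta_{q_k}>0$ and a single $n_0$ satisfying
\begin{equation*}
P_{n_0}((z,i),A)>\beta_0,\qquad \forall(z,i)\in M\times E.
\end{equation*}
Equivalently, $P_{n_0}((z,i),B)<1-\beta_0$ uniformly in $(z,i)$. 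A Chapman--Kolmogorov iteration combined with positive invariance of $A$ (orbits reaching $A$ never come back to $B$) then gives, by induction on $k$,
\begin{equation*}
P_{kn_0}((z,i),B)\le (1-\beta_0)^k,\qquad \forall(z,i)\in B,\ k\ge 1.
\end{equation*}

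Finally, invariance of $\mu$ together with \eqref{eq:meas} yields
\begin{equation*}
\mu(B)=\int_{M\times E}P_{kn_0}((z,i),B)\,d\mu(z,i)\le (1-\beta_0)^k\mu(B),
\end{equation*}
so $\mu(B)=0$ upon letting $k\to\infty$. Hence $\mathrm{supp}\,\mu\subset\overline{A}$, and by Lemma~\ref{lemma:properties}(i) we have $\overline{\mathrm{int}(D_r)}=D_r$ for each $r$, giving $\mathrm{supp}\,\mu\subset\bigcup_{r=1}^l D_r\times E$.

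The main obstacle is the passage from the one-step estimate of Lemma~\ref{lemma:opencover} to the exponential decay of $P_{kn_0}((z,i),B)$: this requires a clean argument that sojourn times being almost surely $\ge\tau$ allow the deterministic dwell-time positive invariance of $\mathrm{int}(D_r)$ to propagate to the stochastic process, so that once a trajectory of $(Z_n)$ enters $A$ it remains there for all later times. Everything else is bookkeeping of a standard Doeblin-type argument.
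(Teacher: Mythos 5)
Your argument is correct, and it takes a genuinely different route from the paper's. The paper proceeds by contradiction and localizes: assuming some $(q_0,i_0)\in\mathrm{supp}\mu$ lies outside $\bigcup_r D_r\times E$, it picks a neighborhood $W_{q_0}$ disjoint from all the $D_r$, applies Lemma~\ref{lemma:opencover} only at that single point $q_0$, and plugs the resulting lower bound $P_n(\cdot,\bigcup_r\mathrm{int}(D_r)\times E)>\beta$ on $W_{q_0}$ into the invariance identity $\mu(A)=\int_{\mathrm{supp}\mu}P_{n+1}(\cdot,A)\,d\mu$, where $A$ denotes the bad part $\mathrm{supp}\mu\setminus\bigcup_r D_r\times E$; together with the dwell-time positive invariance of $\mathrm{int}(D_r)$ this produces a strict deficit $\mu(A)\le\mu(A)-\varepsilon$ with no iteration at all. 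You instead globalize: using the monotonicity $n\mapsto P_n(\cdot,\bigcup_r\mathrm{int}(D_r)\times E)$ (which is legitimate for exactly the reason you and the proof of Lemma~\ref{lemma:opencover} both give) and a finite subcover of $M$, you obtain a single Doeblin pair $(n_0,\beta_0)$ valid at every $(z,i)$, then iterate the Markov property together with the a.s.\ invariance of $\bigcup_r\mathrm{int}(D_r)\times E$ under the one-step kernel (valid because $U_{n+1}\ge\tau$ a.s.\ and Lemma~\ref{lemma:properties}(ii)) to get the geometric decay $P_{kn_0}(\cdot,B)\le(1-\beta_0)^k$ on $B$, and conclude $\mu(B)=0$ directly. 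Both proofs rest on the identical two ingredients — Lemma~\ref{lemma:opencover} and the dwell-time positive invariance of the interiors — so the choice is one of architecture rather than substance; your version is somewhat more machinery but is also strictly more informative, since the exponential bound is exactly what one would want if proving, beyond the support inclusion, a rate of attraction toward the $\tau$-ICS (compare \cite[Theorem 4.5]{BenaimLeborgne}). The step you flag as the main obstacle is indeed the right one to worry about, and your treatment of it is sound.
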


	\begin{proof}
		Suppose, by contradiction, that there exists 
		\[
			(q_0,i_0)\in A:=\mathrm{supp}\mu\setminus\left( \bigcup_{r=1}^l D_r\times E \right).
		\]
		Since each $D_r$ is closed, there exists an open neighborhood $W_{q_0}\subset M$ of $q_0$ with 
		\[W_{q_0}\bigcap \left( \bigcup_{r=1}^l D_r \right)=\emptyset.\]
		In particular, 
		$A\cap(W_{q_0}\times E)=\mathrm{supp}\mu \cap(W_{q_0}\times E)$,
		which implies that
		\be\label{measure>0}
		\mu(A\cap(W_{q_0}\times E))=\mu(W_{q_0}\times E)>0,
		\ee
		where the inequality follows from the fact that $(q_0,i_0)$ is in $\mathrm{supp}\mu$.
		Owing to the invariance of $\mu$ and
		the dwell-time positively invariance of $
		D_r
		$, 
we have
		\begin{align}
			\mu(A)&=\int_{\mathrm{supp}\mu}P_{n
				+1}((q,i),A)d\mu(q,i)\\
			&=\underbrace{\int_{\mathrm{supp}\mu\cap(\bigcup D_r\times E)}P_{n
					+1}((q,i),A)d\mu(q,i)}_{=0}+\int_{\mathrm{supp}\mu\setminus(\bigcup D_r\times E)}P_{n
				+1}((q,i),A)d\mu(q,i)\\
			&=\int_AP_{n
				+1}((q,i),A)d\mu(q,i)\\&\le \int_A\left(1-P_{n
				} \left( (q,i),\bigcup_{r=1}^l\mathrm{int}(D_r)\times E \right) \right)d\mu(q,i)\\
			&\le \mu(A)-\int_{A\cap(W_{q_0}\times E)}P_{n
			} \left( (q,i),\bigcup_{r=1}^l\mathrm{int}(D_r)\times E \right)d\mu(q,i).
		\end{align}
	Using \eqref{measure>0} and Lemma~\ref{lemma:opencover},  
	we get that
		\[\int_{W_{q_0}\times E} P_{n
				}\left((q,i),\bigcup_{r=1}^l \mathrm{int}(D_r)\times E\right)d\mu(q,i)>0,
		\]
which leads to a contradiction. 
	\end{proof}
		
Denote by $\pi_M:M\times E\to M$ the projection $\pi_M:(q,i)\mapsto q$.

	\begin{prop}\label{prop:revsupp}
		Let $M$ be a compact manifold and assume that $\mc{F}$ satisfies assumption \eqref{eq:mainhyp}. Let $D$ be a $\tau$-ICS associated with \eqref{eq:controlsys}, and let $\mu$ be any invariant measure associated with the process $(Z_n)_{n\ge 0}$.
Assume that $\mathrm{supp}\mu\cap (
		D\times E)\neq \emptyset$. Then $D\subset \pi_M(\mathrm{supp}\mu)$ and, in particular, $\mu(D\times E)>0$.
	\end{prop}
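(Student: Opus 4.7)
The plan is to combine the $\tau$-ICS structure of $D$ with Lemma~\ref{lemma:secondincl} to show that every point of $D$ is hit by $\mathrm{supp}\,\mu$ when projected to $M$. To begin, I fix a pair $(q_0,i_0)\in\mathrm{supp}\,\mu\cap(D\times E)$ supplied by the hypothesis. For an arbitrary $p\in D$ and an arbitrary open neighborhood $W_p\subset M$ of $p$, the goal is to prove that $\mu(W_p\times E)>0$; this forces $p\in\pi_M(\mathrm{supp}\,\mu)$, and since $p$ is arbitrary, this yields $D\subset\pi_M(\mathrm{supp}\,\mu)$.

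The key observation is that the shifted base point $e^{\tau X_{i_0}}(q_0)$ still lies in $D$. Indeed, under assumption \eqref{eq:mainhyp} Remark~\ref{rem:closedinvset} ensures that $D$ is closed, and since $D$ is a $\tau$-ICS we have $\Sft(q)\subset\overline{\Sft(q)}=\ov{D}=D$ for every $q\in D$; applying this to $q_0\in D$ with the element $e^{\tau X_{i_0}}\in\Sft$ gives $e^{\tau X_{i_0}}(q_0)\in D$. By the defining property of a $\tau$-ICS, $D=\overline{\Sft(r)}$ for every $r\in D$, so in particular $p\in\overline{\Sft(e^{\tau X_{i_0}}(q_0))}$. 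Lemma~\ref{lemma:secondincl}, applied with $q=q_0$, $i=i_0$, $y=p$, then produces $n\in\N$, $\beta>0$, $j\in E$, and a neighborhood $W_{q_0}$ of $q_0$ such that $P_n((z,i_0),W_p\times\{j\})>\beta$ for every $z\in W_{q_0}$. Using invariance of $\mu$ via \eqref{eq:meas},
\[
\mu(W_p\times\{j\})\;\geq\;\int_{W_{q_0}\times\{i_0\}}P_n((z,i_0),W_p\times\{j\})\,d\mu\;\geq\;\beta\,\mu(W_{q_0}\times\{i_0\}),
\]
and the right-hand side is strictly positive because $(q_0,i_0)\in\mathrm{supp}\,\mu$.

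To conclude the second assertion $\mu(D\times E)>0$, I select $p\in\mathrm{int}(D)$, which is nonempty by Proposition~\ref{prop:notempty} (or the second part of Theorem~\ref{thm:existence}), and a neighborhood $W_p\subset\mathrm{int}(D)$. The previous estimate then gives $\mu(D\times E)\geq\mu(W_p\times\{j\})>0$. The only delicate point is ensuring that Lemma~\ref{lemma:secondincl} can legally be invoked with base point $q_0$ and target $p$; this is precisely where we need both the positive invariance of $D$ under $\Sft$ and the identity $D=\overline{\Sft(r)}$ for every $r\in D$, which together guarantee that $p$ is reachable (in the closure sense) from $e^{\tau X_{i_0}}(q_0)$, as required by the hypothesis of the lemma.
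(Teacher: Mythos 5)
Your proof is correct and genuinely cleaner than the paper's. Both arguments hinge on the same triad of ingredients: dwell-time positive invariance of $D$ (so that $e^{\tau X_{i_0}}(q_0)\in D$), the characterization $D=\ov{\Sft(r)}$ for every $r\in D$, and Lemma~\ref{lemma:secondincl} as the bridge from the deterministic reachability structure to lower bounds on transition probabilities. The difference is in how the measure-theoretic conclusion is extracted. The paper argues by contradiction: it supposes some $y\in D$ lies outside $\pi_M(\mathrm{supp}\mu)$, reduces to $y\in\mathrm{int}(D)$ using $D=\ov{\mathrm{int}(D)}$, and then derives a contradiction from a somewhat intricate averaged-transition estimate (the $\frac{1}{m}\sum_{l\in E}$ computation). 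You instead argue directly: for any $p\in D$ and any neighborhood $W_p$ of $p$, the invariance identity $\mu(W_p\times\{j\})=\int P_n((q,i),W_p\times\{j\})\,d\mu\ge\beta\,\mu(W_{q_0}\times\{i_0\})>0$ immediately shows every neighborhood of $p$ charges $\mu$, hence $p\in\pi_M(\mathrm{supp}\mu)$. This buys you two things: no contradiction scaffolding, and no need for the auxiliary reduction to $\mathrm{int}(D)$ (which you invoke only to get the second, minor, assertion $\mu(D\times E)>0$). The verification that Lemma~\ref{lemma:secondincl} is legitimately applicable is also carried out more explicitly in your version, which is a welcome clarification.
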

	
	\begin{proof}
		Assume, by contradiction, that there exists $y\in D$ such that $(y,j)\notin \mathrm{supp}\mu$ for every $j\in E$. Since $D=\ov{\mathrm{int}(D)}$ and $\pi_M(\mathrm{supp}\mu)$ is closed, it is not restrictive to assume $y\in\mathrm{int}(D)$, whence there exists an open neighborhood $W_y\subset \mathrm{int}(D)$ satisfying
		\be\label{eq:supports}
			(W_y\times \{j\})\cap \mathrm{supp}\mu=\emptyset\qquad\mbox{for every }j\in E.
		\ee
		Let now $(q_0,i_0)\in \mathrm{supp}\mu\cap(D\times E)$. 
		As a consequence of the equality $\ov{\Sft(p)}=D$ for every $p\in D$, it follows both that $e^{\tau X_i}q_0\in 
		D
		$ for every $i\in E$, 
		and
		\[
			y\in\bigcap_{i\in E}\ov{\Sft(e^{\tau X_i}q_0)}.
		\]
By	Lemma~\ref{lemma:secondincl}, there exist an open neighborhood $W_{q_0}\subset M$ of $q_0$ and a 
map $i\mapsto (n_i,j_i)\in (\N\setminus\{0\})\times E$ such that 
	\[
			P_{n_i}((z,i),W_y\times \{ j_i \})>0\qquad\mbox{for  every $z\in W_{q_0}$}.
	\]
Notice that, since $(q_0,i_0)\in\mathrm{supp}\mu$, then 
 		\[
			0< \mu(W_{q_0}\times \{i_0\}). 
		\] 
Hence,
\begin{align*}
		\sum_{l\in E}	\int_{M\times E}P_{n_{l}}((z,i),W_y\times\{j_i\})d\mu(z,i)&\ge\int_{M\times E}P_{n_{i_0}}((z,i),W_y\times\{j_i\})d\mu(z,i)\\
&		\ge \int_{M}P_{n_{i_0}}((z,i_0),W_y\times\{j_{i_0}\})d\mu(z,i_0)>0.
		\end{align*}

		Now, from \eqref{eq:meas} we have
		\begin{align}\label{eq:eq1}
			1&=\mu(M\times E)=\frac{1}{m}\sum_{l\in E}\int_{M\times E} P_{n_l}((z,i),M\times E)d\mu(z,i)\\
			&=\frac{1}{m}\sum_{l\in E}\left( \int_{M\times E} P_{n_l}((z,i),(M\times E)\setminus (W_y\times\{j_i\}))d\mu(z,i)+\int_{M\times E} P_{n_l}((z,i),W_y\times \{j_i\})d\mu(z,i) \right)\\
			&>\frac{1}{m}\sum_{l\in E}\int_{M\times E} P_{n_l}((z,i),(M\times E)\setminus (W_y\times\{j_i\})d\mu(z,i).
		\end{align} 
This leads to a contradiction, since
		\begin{align}
			1&=\mu(\mathrm{supp}\mu)=\frac{1}{m}\sum_{l\in E}\int_{M\times E}P_{n_l}((z,i),\mathrm{supp}\mu)d\mu(z,i)\\
			&\le \frac{1}{m}\sum_{l\in E}\int_{M\times E}P_{n_l}((z,i), (M\times E )\setminus (W_y\times \{j_i\}) d\mu(z,i)<1,
		\end{align}
		where the first inequality follows from   \eqref{eq:supports}.
	\end{proof}
	
	\subsection{Ergodic invariant measures
	}

	We consider now the case of invariant ergodic measures.
	
	\begin{defi}
		An invariant measure $\mu$ for the discrete-time process $(Z_n)_{n\in\N}$ is said to be \emph{ergodic} if it cannot be expressed as a proper convex combination of invariant measures for the same process.
	\end{defi}
	
In analogy with \cite[Theorem 4.7]{BenaimColonius}, we have the following result.
	
	\begin{thm}\label{thm:ergodic}
		Let $M$ be compact manifold and assume that $\mc{F}$ satisfies \eqref{eq:mainhyp}. Then
		\begin{itemize}
			\item [i)] For every ergodic invariant measure $\mu$ of the discrete-time process $(Z_n)_{n\in\N}$ there is a $\tau$-ICS $D$ for which $\pi_M(\mathrm{supp}\mu)=D$.
			\item [ii)] Conversely, let $D$ be any $\tau$-ICS. Then there exists an ergodic invariant measure $\mu$ with $\pi_M(\mathrm{supp}\mu)=D$.
			Moreover, $\mu$ is absolutely continuous with respect to the Lebesgue measure, and is the unique ergodic invariant measure whose support is contained in $D\times E$.
		\end{itemize}
	\end{thm}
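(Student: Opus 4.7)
The plan is to split the argument into two halves following the statement.

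For part (i), I would start from Proposition~\ref{prop:firstest}, which confines $\mathrm{supp}\mu$ inside the union $\bigcup_{r=1}^l D_r\times E$ of the (finitely many, by Proposition~\ref{prop:finite}) $\tau$-ICS on $M$. Two distinct $\tau$-ICS are disjoint, because any common point $q$ would satisfy $\ov{\Sft(q)}=D_r=D_s$ by Remark~\ref{rem:closedinvset} and maximality; moreover, each $D_r\times E$ is invariant under the one-step transition kernel $P$, because $D_r$ is dwell-time positively invariant and $U_1\ge\tau$ almost surely. If $\mathrm{supp}\mu$ met more than one $D_r\times E$, say $D_1\times E$ and some $D_{r'}\times E$ with $r'\ne 1$, then the normalized restrictions $\mu_1=\mu|_{D_1\times E}/\mu(D_1\times E)$ and $\mu_2=\mu|_{(M\setminus D_1)\times E}/\mu((M\setminus D_1)\times E)$ would both be invariant probability measures and $\mu$ a non-trivial convex combination of them, contradicting ergodicity. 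Hence $\mathrm{supp}\mu\subset D_r\times E$ for a single $r$, and Proposition~\ref{prop:revsupp} then upgrades this inclusion to $\pi_M(\mathrm{supp}\mu)=D_r$.

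For part (ii), existence of an invariant probability measure supported in $D\times E$ follows from Krylov-Bogolyubov applied to the restricted Feller chain on the compact $P$-invariant set $D\times E$, extended by zero to $M\times E$. Decomposing such a measure into ergodic components and applying part (i) together with the disjointness of distinct $\tau$-ICS, every ergodic component has projected support equal to $D$, so the desired ergodic $\mu$ exists. The core remaining task is to establish uniqueness (among ergodic invariant measures supported in $D\times E$) together with absolute continuity, and my plan is to deduce both from a Doeblin-type minorization on $D\times E$ of the form
\[
P_n\big((q,i),B\big)\ge \delta\,\mathrm{Leb}\big(B\cap(\Omega\times\{j_0\})\big)\qquad\text{for all }(q,i)\in D\times E\text{ and all Borel }B\subset D\times E,
\]
for some $n\in\N$, $\delta>0$, some $j_0\in E$, and some non-empty open $\Omega\subset D$.

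To construct this minorization I fix $q_0\in\mathfrak{C}$, the open dense set of Lemma~\ref{lemma:properties}(iv) contained in $D$, and, by Remark~\ref{rmk:bestkren}, choose a word $(i_1,\dots,i_d)$ in $E$ with consecutive entries distinct (possible because $\mc{Q}(\cdot,i,i)=0$) and times $\tau_1,\dots,\tau_d>\tau$ such that $(t_1,\dots,t_d)\mapsto e^{t_dX_{i_d}}\circ\cdots\circ e^{t_1X_{i_1}}(q_0)$ is a local diffeomorphism at $(\tau_1,\dots,\tau_d)$. Since the densities of $U_1,\dots,U_d$ are bounded below on a small box around $(\tau_1,\dots,\tau_d)$ and the off-diagonal entries of $\mc{Q}$ are strictly positive, the push-forward under this diffeomorphism of the joint law of $(U_1,\dots,U_d)$, weighted by the appropriate Markov transitions, produces a lower bound of the required form on the $d$-step transition probability from a small neighborhood $V$ of $(q_0,i_1)$. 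To make the bound uniform over $(q,i)\in D\times E$ I steer every starting state into $V$ in a uniformly bounded number of steps, by combining Lemma~\ref{lemma:secondincl} with a finite cover of the compact set $D$ as in the proof of Lemma~\ref{lemma:opencover}; the Chapman-Kolmogorov identity then yields the minorization. From it, classical Markov chain theory gives uniqueness of the invariant probability measure on $D\times E$ (and hence ergodicity), while the invariance identity $\mu=\mu P_n\ge \delta\,\mathrm{Leb}|_{\Omega\times\{j_0\}}$ produces an absolutely continuous component of $\mu$; to spread the absolute continuity over all of $D\times E$ I use that $\mathfrak{C}=\Sft(q)$ for every $q\in\mathfrak{C}$, so that from $\Omega\times\{j_0\}$ one can steer with positive density to any analogous patch $\Omega'\times\{j'\}$ in finitely many further steps, and iterating covers $D\times E$.

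The main obstacle I expect is the construction of this minorization and the uniformity of $n$ and $\delta$ over the whole of $D\times E$; it hinges on a careful combination of the density of the inter-switch times given by \eqref{eq:density}, the Krener-type surjectivity from Remark~\ref{rmk:bestkren}, and the compactness-based uniformity furnished by Lemma~\ref{lemma:opencover}.
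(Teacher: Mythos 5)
Part (i) of your proposal is essentially the paper's argument: confine $\mathrm{supp}\,\mu$ via Proposition~\ref{prop:firstest}, use pairwise disjointness of the $\tau$-ICS and $P$-invariance of each $D_r\times E$ (dwell-time positive invariance plus $U_1\ge\tau$ a.s.), invoke ergodicity to isolate a single $D_r\times E$, and upgrade to $\pi_M(\mathrm{supp}\,\mu)=D_r$ with Proposition~\ref{prop:revsupp}. For part (ii), your existence argument (Feller plus compactness of $D\times E$, then Krein--Milman / ergodic decomposition and part (i)) also matches the paper. One small remark: the consecutive modes produced by the Krener construction of Remark~\ref{rmk:bestkren} are automatically distinct, because repeating the same vector field on two consecutive steps makes the corresponding partial derivatives linearly dependent; it is not the constraint $\mc{Q}(\cdot,i,i)=0$ that makes your word ``possible'', but this structural fact that makes the construction compatible with the constraint.

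Where you depart from the paper is in the uniqueness and absolute-continuity claims of part (ii). The paper does not prove these from scratch; it defers to \cite[Theorem 1]{Bakhtin} and \cite[Section~4]{BenaimLeborgne}. Your Doeblin-type minorization is a reasonable self-contained route to uniqueness, provided you carefully carry out the compactness/finite-cover step to make $n$ and $\delta$ uniform over $D\times E$.

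The genuine gap is absolute continuity. From $\mu=\mu P_n\ge\delta\,\mathrm{Leb}|_{\Omega\times\{j_0\}}$ you only conclude that $\mu$ has a nontrivial absolutely continuous component; you have not ruled out a singular component. The ``spreading'' step propagates the a.c.\ part to other patches, but never excludes a residual singular part: the Lebesgue decomposition $\mu=\mu_{\rm ac}+\mu_{\rm s}$ need not be preserved by $P$, precisely because $P^n((q,i),\cdot)$ is itself a mixture that typically carries singular mass (e.g., from word sequences of length $<d$, or parameter regions where the Krener map is not submersive). To conclude $\mu_{\rm s}=0$ one needs a finer argument — essentially that some power $P^N((q,i),\cdot)$ is absolutely continuous for all $(q,i)\in D\times E$, or a regeneration argument controlling the singular part uniformly — and this is exactly the content of the cited reference. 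You should either invoke \cite[Theorem~1]{Bakhtin} as the paper does, or fill in this step explicitly; the local Krener surjectivity you use is necessary but not sufficient on its own.
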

	
	\begin{proof}
		Let us first prove i). Proposition~\ref{prop:firstest} implies that $\pi_M(\mathrm{supp}\mu)$ is contained in the union of all $\tau$-ICS $D_1,\dots,D_l$. 
		By Proposition~\ref{prop:revsupp} we only need to show that that $\mathrm{supp}\mu$ intersects only one set of the form $
		D_r\times E$, $r\in\{1,\dots,l\}$. 
		Let $r$ be such that $\mathrm{supp}\mu\cap (D_r\times E)\ne 0$. Then $\mu(D_r\times E)>0$ by Proposition~\ref{prop:revsupp} and, for every $\mu$-measurable set $A\subset 
		D_r\times E$, we have 
		\[
			\mu(A)=\sum_{s=1}^l\int_{D_s\times E}P_n((q,i),A)d\mu(q,i),\quad \text{for every }n\in\N\setminus\{0\}.
		\]
		Notice that it actually sufficient to integrate over $D_r\times E$, since $(q,i)\in\mathrm{supp}\mu\cap(D_s\times E)$ implies that  $q\in D_s$. If $s\neq r$, by the dwell-time positive invariance of $D_s$, it is 
		impossible to connect $(q,i)$ to $A$ by an admissible trajectory of \eqref{eq:controlsys}.
		
		It follows that the restriction of $\mu$ to $D_r\times E$ is an invariant measure, and therefore one must also have 
		\[
			\mu\left(\bigcup_{s\neq r}D_s\times E\right)=0,
		\]
		for otherwise the same reasoning as before would imply that $\mu$ could be written as a proper convex sum of invariant probability measures, contradicting the ergodicity assumption.
		
		As for point ii), we observe that the existence of an invariant  measure whose support is contained in $D\times E$ 
		 relies on the Feller property
		for the process $(Z_n)_{n\in\N}$ and the compactness and the positive dwell-time invariance of $
		D\times E$. 
		Since the set of invariant measures with support contained in $D\times E$ is convex and compact, by the Krein--Milman theorem 
		it contains at least one ergodic invariant measure $\mu$, and the fact that its support has projection equal to $D$ then follows from point i). 
		The dwell-time invariance of $D\times E$ 
		and hypothesis  \eqref{eq:mainhyp} allow to conclude as in 
		\cite[Theorem 1]{Bakhtin} (see also \cite[Section 4]{BenaimLeborgne}) that $\mu$ is the unique ergodic invariant measure with support contained in $D\times E$ and that $\mu$ is absolutely 
		continuous with respect to the Lebesgue measure.
		\end{proof}

\subsection{Stochastic Lyapunov exponents of dwell-time linear systems}\label{s:stoch-lin}

Assume in this section that $M=\Prd$ and that the vector fields $X_i$, $i\in E=\{1,\dots,m\}$, are induced by linear matrices $A_i$, $i\in E$.
For simplicity, we also assume that the transition matrix $\mc{Q}$ is independent of $q$. Let the processes $T_n$, $U_n$, and $L_n$ be defined as in the previous section and associate with them the process $\Phi_n$ 
defined 
recursively by $\Phi_0=\mathrm{Id}\in \mathrm{GL}(\R,d)$ and $\Phi_{n+1}=e^{U_{n+1} A_{L_n}}\Phi_n$.

Under the assumption that $\{\pi_*A_i\mid i\in E\}$ satisfies \eqref{eq:mainhyp}, 
we denote by $D\subset \Prd$ the  unique $\tau$-ICS for system \eqref{eq:projsyst} (see Remark~\ref{rmk:Huniq}) and by 
	$\mu$ the unique invariant ergodic measure  provided by Theorem~\ref{thm:ergodic}.
	Denote by $\mu_0$ the probability distribution of $Z_0$ on $\Prd\times E$. 
The sequence of probability measures
 \[A\mapsto \int_{\Prd\times E} P_n((q,i),A)d\mu_0(q,i)\]
then converges to $\mu$ in total variation distance as $n\to \infty$ (see \cite[Theorem 4.5]{BenaimLeborgne}). 	

Thanks to Furstenberg--Kesten theorem (see, for instance, \cite{Viana} or \cite{Arnold1998Random}),  
there exists $\chi_d$ in $\R$ such that, almost surely, 
\be\label{eq:chid}
	\lim_{n\to\infty}\frac{1}{n}\log\left\|\Phi_n\right\|=\chi_d.
\ee

Moreover, by \cite[Proposition 3.12]{ColoniusMazanti}, the continuous-times process 
$t\mapsto \Psi_t$, where $\Psi_t=e^{(t-T_n) A_{L_n}}\Phi_n$ if $t\in[T_n,T_{n+1})$, satisfies almost surely
\be\label{eq:chic}
	\lim_{t\to\infty}\frac{1}{t}\log\left\|\Psi_t\right\|=\chi_d\,\lambda,
\ee
 where we recall that $\lambda$ denotes the coefficient characterizing the exponential distribution $f=f_{U_i}$ as in \eqref{eq:density}. 
  We can define the \emph{Lyapunov exponent of the stochastic process $\Psi_t$}  as the quantity
  \[\chi_\tau(S)=\chi_d\,\lambda.\]

\begin{prop}\label{prop:supportO}
	Let $E=\{1,\dots,m\}$, $m\ge 2$, 
	and $\{A_i \mid i\in E\}\subset M_d(\R)$
	be such that $\{\pi_*A_i\mid i\in E\}$ satisfies \eqref{eq:mainhyp}. 
	Denote by $D\subset \Prd$ the  unique $\tau$-ICS for system \eqref{eq:projsyst} and by 
	$\mu$ the unique invariant ergodic measure  provided by Theorem~\ref{thm:ergodic}.
	For every $i\in E$, let $\mu_i$ be the measure on $\Prd\times E$ defined by 
	$\mu_i(A)=\mu(A\cap (\Prd\times\{i\})$. 
	Then the probability measure $\nu$ on $\Prd\times E$ defined by
		\be\label{defnu}
		\nu =\frac{\lambda^2}{\tau\lambda+1}\sum_{i=1}^m \int_\tau^\infty\left( \int_0^s \left( e^{tA_i},\mathrm{Id} \right)_* \mu_i\, dt\right) e^{-\lambda(s-\tau)}
		ds,
	\ee
where 
$\left( e^{tA_i},\mathrm{Id} \right)_* \mu$ denotes the pushforward measure of $\mu$ along $ (e^{tA_i},\mathrm{Id})$,
satisfies
	\be\label{eq:supportsv}
	\mathrm{supp}\nu\subset \cup_{i\in E,\,t\in [0,\tau]}e^{tA_i}(D)\times\{i\}
	\ee 
 and
	\[
	\chi_\tau(S)=\int_{\Prd\times E}\langle \theta,A_i \theta\rangle d\nu(\theta,i).
	\]
\end{prop}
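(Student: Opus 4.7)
The plan is to prove the proposition in three parts: first checking that $\nu$ is a probability measure on $\Prd \times E$, then verifying the support inclusion \eqref{eq:supportsv}, and finally identifying the integral $\int F\,d\nu$ of $F(\theta,i) = \langle \theta, A_i\theta\rangle$ with the Lyapunov exponent $\chi_\tau(S)$.

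The probability measure check is a direct Fubini computation using $\int_\tau^\infty s\, e^{-\lambda(s-\tau)}\,ds = (\tau\lambda+1)/\lambda^2$ and $\sum_{i\in E} \mu_i(\Prd \times E) = \mu(\Prd \times E) = 1$. For the support inclusion, I would note that since $\mathrm{supp}\,\mu \subset D \times E$ by Theorem~\ref{thm:ergodic}, the pushforward $(e^{tA_i}, \mathrm{Id})_*\mu_i$ is supported in $e^{tA_i}(D) \times \{i\}$, and integration over $t\in[0,s]$ places the support in $\bigcup_{t\in[0,s]} e^{tA_i}(D)\times \{i\}$. Since $s\ge \tau$ in the outer integral and since $D$ is dwell-time positively invariant (so $e^{tA_i}(D)\subset D$ for $t\ge \tau$), this union coincides with $\bigcup_{t\in[0,\tau]} e^{tA_i}(D)\times \{i\}$, giving \eqref{eq:supportsv}.

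The central step is to recognize $\nu$ as the ergodic invariant probability measure of the continuous-time process $(Y_t, L(t))$ on $\Prd \times E$, where $L(t) = L_n$ for $t\in[T_n, T_{n+1})$. A standard Markov-renewal argument gives, for any bounded measurable $\phi$,
\[
\lim_{t\to\infty} \frac{1}{t} \int_0^t \phi(Y_s, L(s))\,ds = \frac{1}{\E[U_1]} \E_\mu\!\left[\int_0^{U_1} \phi(e^{sA_{L_0}}Q_0, L_0)\,ds\right] \quad \text{a.s.,}
\]
and unfolding the expectation using the density \eqref{eq:density} and $\E[U_1] = \tau + 1/\lambda$ reproduces exactly $\int \phi\,d\nu$, with the prefactor $\lambda^2/(\tau\lambda+1)$ accounting for the shifted-exponential density of $U_1$. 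The ergodicity of $\nu$ for the continuous-time process is inherited from the ergodicity of $(Q_n, L_n)$ under $\mu$ granted by part i) of Theorem~\ref{thm:ergodic}.

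To conclude, I would apply the radial decomposition for the linear flow: for any unit vector $x_0 \in \mathbb{S}^{d-1}$ projecting to $Y_0$, we have $\frac{d}{ds}\log\|\Psi_s x_0\| = \langle Y_s, A_{L(s)}Y_s\rangle = F(Y_s, L(s))$, and therefore $\log\|\Psi_t x_0\|=\int_0^t F(Y_s,L(s))\,ds$. Dividing by $t$ and combining with the previous displayed equation yields $\lim_{t\to\infty} t^{-1}\log\|\Psi_t x_0\| = \int F\,d\nu$ almost surely; the identity $\chi_\tau(S) = \int F\,d\nu$ then follows from \eqref{eq:chic} provided we know that $\lim t^{-1}\log\|\Psi_t x_0\| = \lim t^{-1}\log\|\Psi_t\|$ for a $\mu$-generic $x_0$. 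The main obstacle is precisely this last step: it requires that the Lyapunov exponent along the trajectory starting at $\mu$-almost every initial condition attains the operator-norm growth rate. I would deduce it from the Oseledets multiplicative ergodic theorem applied together with the irreducibility of $\{A_i\}_{i\in E}$, which follows from the LARC hypothesis on $\{\pi_*A_i\}_{i\in E}$ (any proper invariant subspace $V\subset \R^d$ would yield a proper submanifold $\mathbb{P}(V)\subset \Prd$ invariant under all $\pi_*A_i$, contradicting LARC).
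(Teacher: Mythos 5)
Your proof follows essentially the same route as the paper's: decompose $\Psi_t X$ into polar coordinates so that $\frac{d}{dt}\log R_t = \langle Y_t, A(t)Y_t\rangle$, apply an ergodic theorem to the resulting time average, and rewrite the expectation against $\mu$ and the shifted-exponential density of the holding times to produce exactly $\nu$ with the prefactor $\lambda^2/(\tau\lambda+1)$; the support inclusion is obtained identically, using $\mathrm{supp}\,\mu\subset D\times E$ and the dwell-time positive invariance $e^{tA_i}(D)\subset D$ for $t\ge\tau$. The one place you go further is in explicitly flagging that equating $\lim t^{-1}\log\|\Psi_t x_0\|$ (along a $\mu$-typical initial direction) with $\lim t^{-1}\log\|\Psi_t\| = \chi_\tau(S)$ is not automatic; the paper asserts this step without comment. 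Your proposed justification via Oseledets and irreducibility is the right direction, but on its own "Oseledets + irreducibility" does not immediately rule out $\mu$-typical directions landing in the random slow Oseledets subspace: one also needs that $\mu$ charges no proper projective subspace, which here can be obtained either from the classical fact that under irreducibility any stationary measure on $\Prd$ gives zero mass to projective subspaces (Furstenberg), or more directly from the absolute continuity of $\mu$ with respect to Lebesgue measure asserted in Theorem~\ref{thm:ergodic}~ii).
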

\begin{proof}
Consider the discrete-time process $Z_n=(Q_n,L_n)$ and denote by $X$ a nonzero 
vector in 
$\R^n$  such that $\pi X=Q_0$. 
For every $t\ge 0$, identify $\Psi_t X$ with $(R_t,Y_t)\in (0,\infty)\times\Prd$ by polar decomposition. The dynamics of $t\mapsto R_t$ is governed by the (stochastic) differential equation
\[
	\frac{\dot{R}_t}{R_t}=\left\langle Y_t,A(t)Y_t \right\rangle,
\]
where $A(t)=A_{L_n}$ if $t\in [T_n,T_{n+1})$. 
Let us recall that $N_t/t$ almost surely converges to  $\frac\lambda{\tau\lambda+1}$ as $t$ tends to infinity. 
We thus have, almost surely, 
\begin{align*}
	\chi_\tau(S)&=
	\lim_{t\to\infty}\frac{1}{t}\log\left(\frac{R_t}{R_0}\right)=
	\frac\lambda{\tau\lambda+1}\lim_{n\to\infty}\frac{1}{n}\log\left(\frac{R_{T_n}}{R_0}\right)\\
	&=\frac\lambda{\tau\lambda+1}\lim_{n\to\infty}\frac{1}{n}\sum_{j=1}^n\log\left(\frac{R_{T_j}}{R_{T_{j-1}}}\right)
	=\frac\lambda{\tau\lambda+1}\lim_{n\to\infty}\frac{1}{n}\sum_{j=1}^n\int_{T_{j-1}}^{T_j}\left\langle Y_t,A(t)Y_t \right\rangle dt\\
	&=\frac\lambda{\tau\lambda+1}\lim_{n\to\infty}\frac{1}{n}\sum_{j=1}^n\int_{0}^{U_j}\left\langle e^{tA_{L_{j-1}}}Y_{T_{j-1}},A_{L_{j-1}} e^{tA_{L_{j-1}}}Y_{T_{j-1}}\right\rangle dt\\
	&=\frac\lambda{\tau\lambda+1}\lim_{n\to\infty}\frac{1}{n}\sum_{j=1}^n\int_{0}^{U_j}\left\langle e^{tA_{L_{j-1}}}Q_{j-1},A_{L_{j-1}} e^{tA_{L_{j-1}}}Q_{j-1}\right\rangle dt.
\end{align*} 
By the Birkhoff ergodic theorem, we have that
\begin{align*}
 \lim_{n\to\infty}\frac{1}{n}&\sum_{j=1}^n\int_{0}^{U_j}\left\langle e^{tA_{L_{j-1}}}Q_{j-1},A_{L_{j-1}} e^{tA_{L_{j-1}}}Q_{j-1}\right\rangle dt\\
&=\lambda \int_{\Prd\times E}\int_0^\infty \int_0^s \left\langle  e^{tA_i}\theta,A_i e^{tA_i}\theta  \right\rangle dte^{-\lambda(s-\tau)}\mathbf{1}_{\{s\ge \tau\}}ds\,d\mu(\theta,i)\\
&=\lambda \int_{\Prd\times E}\int_\tau^\infty \int_0^s \left\langle  e^{tA_i}\theta,A_i e^{tA_i}\theta  \right\rangle dt e^{-\lambda(s-\tau)}ds\,d\mu(\theta,i).
\end{align*}
Hence, 
	\begin{align}
		\chi_\tau(S)&=		\frac{\lambda^2}{\tau\lambda+1}\int_{\Prd\times E}\int_\tau^\infty \int_0^s \left\langle  e^{tA_i}\theta,A_i e^{tA_i}\theta  \right\rangle dte^{-\lambda(s-\tau)}ds\,d\mu(\theta,i)\\
		&=		\frac{\lambda^2}{\tau\lambda+1}\sum_{i=1}^m \int_\tau^\infty \int_0^s \left(\int_{\Prd\times E}\left\langle  \theta,A_i \theta  \right\rangle 
		d((e^{tA_i},\mathrm{Id})_*\mu)(\theta,i)\right)
		dte^{-\lambda(s-\tau)}ds\\
		&=		 \int_{\Prd\times E}\left\langle  \theta,A_i \theta  \right\rangle 
		d\nu(\theta,i),
	\end{align}
	where $\nu$ is defined as in \eqref{defnu}. 

We are left to prove \eqref{eq:supportsv}.
By construction of $\nu$, we have
\[\mathrm{supp}\nu\subset \cup_{i\in E,\,t\ge 0} (e^{tA_i},\mathrm{Id})\mathrm{supp}\mu_i,\]
whence the conclusion, since $\mathrm{supp}\mu_i\subset D\times \{i\}$ and $e^{tA_i}D\subset D$ for every $i\in E$ and $t\ge \tau$.
\end{proof}
	
	\appendix
	
	\section{Existence of a closed orbit}
	
	We prove in this section Theorem~\ref{thm:closedorbit} and we deduce some consequences concerning approximate and exact controllability of projected linear systems. The proof follows the steps suggested to us by Uri Bader and Claudio Procesi, to whom we are very grateful. Every imprecision or naiveness should  be attributed solely to us. Before presenting the proof, we need some preliminary definitions (see also \cite[Chapter 1]{Zimmer}). 
	
	We recall that the set $G/H$ of left cosets of a subgroup $H$ of a topological group $G$ can be endowed with the  quotient topology induced by $\pi:G\to G/H$. If $H$ is a closed subgroup of $G$, then $G/H$ is Hausdorff.
	
	\begin{defi}\label{defi:cocompact}
		Let $G$ be a locally compact topological group. A subgroup $H\subset G$ is \emph{cocompact in $G$} if the quotient space $G/H$ is compact.
	\end{defi}

	\begin{defi}
		A group $G$ is said to be
\emph{solvable} if there exists a finite sequence $\{1\}=G_0\subset G_1\subset\dots\subset G_k=G$ of  subgroups such that $G_{j-1}$ is normal in $G_j$ and $G_j/G_{j-1}$ is abelian for every $j=1,\dots,k$.
	\end{defi}
	
	We also need to recall the following  
	results from the theory of semisimple Lie groups (see, e.g. \cite[\S 3, Chapter VI]{Helgason}).

	\begin{prop}\label{prop:Iwasawa}
		Let $G$ be a noncompact semisimple real algebraic Lie group. Let $K$ denote a maximal compact subgroup
of $G$. Then:
	\begin{itemize}
		\item[i)]There exists an Iwasawa decomposition $G = KAN$, where $A$ is abelian simply
connected (a vector subgroup of $G$) and $N$ is a nilpotent simply connected subgroup of
$G$ preserved by the action of $A$.  
		\item[ii)] Let M be the centralizer of A in K. Then the
subgroup $P_0 = MAN$ is a 
closed cocompact subgroup of $G$, and $AN$ is a closed cocompact connected solvable normal subgroup of $P_0$ (hence a closed cocompact solvable connected subgroup of $G$). 
	\end{itemize}
	\end{prop}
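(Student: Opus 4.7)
The plan is to derive both statements from the structure theory of semisimple real Lie algebras. Concretely, I would begin by fixing a Cartan involution $\theta$ of the Lie algebra $\mathfrak{g}$ of $G$, producing the decomposition $\mathfrak{g}=\mathfrak{k}\oplus\mathfrak{p}$ into $\pm 1$ eigenspaces, where $\mathfrak{k}$ is the Lie algebra of $K$. Since $G$ is noncompact semisimple, $\mathfrak{p}\ne 0$, and I would pick a maximal abelian subalgebra $\mathfrak{a}\subset\mathfrak{p}$ (unique up to $\mathrm{Ad}(K)$-conjugacy). The restricted root space decomposition reads $\mathfrak{g}=\mathfrak{g}_0\oplus\bigoplus_{\alpha\in\Sigma}\mathfrak{g}_\alpha$, and after choosing a system of positive roots I set $\mathfrak{n}=\bigoplus_{\alpha>0}\mathfrak{g}_\alpha$; this $\mathfrak{n}$ is a nilpotent subalgebra, normalized by $\mathfrak{a}$ because $[\mathfrak{a},\mathfrak{g}_\alpha]\subset\mathfrak{g}_\alpha$.

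For part (i), the key algebraic step is to verify that the sum $\mathfrak{k}+\mathfrak{a}+\mathfrak{n}$ is direct and exhausts $\mathfrak{g}$: directness uses $\mathfrak{a}\cap\mathfrak{k}=0$ (since $\mathfrak{a}\subset\mathfrak{p}$) together with $\mathfrak{n}\cap(\mathfrak{k}+\mathfrak{a})=0$, while surjectivity is obtained by rewriting each $X\in\mathfrak{g}_{-\alpha}$ as $(X+\theta X)-\theta X\in\mathfrak{k}+\mathfrak{n}$, using that $\theta$ exchanges $\mathfrak{g}_\alpha$ and $\mathfrak{g}_{-\alpha}$. To pass to the group level I would set $A=\exp(\mathfrak{a})$ and $N=\exp(\mathfrak{n})$; on an abelian (resp.~nilpotent) real algebra the exponential is a diffeomorphism onto the corresponding simply connected Lie group, so $A$ and $N$ are closed simply connected subgroups of $G$. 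The global statement $G=KAN$ together with the fact that $K\times A\times N\to G$ is a diffeomorphism is then obtained by combining the polar decomposition $G=K\exp(\mathfrak{p})$ with the observation that every element of $\exp(\mathfrak{p})$ admits a unique factorization as a product of an element of $A$ and one of $N$; the differential of the product map at the identity is invertible thanks to the algebraic Iwasawa decomposition.

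For part (ii), the centralizer $M=Z_K(A)$ is closed in $K$, hence compact. The subgroup $P_0=MAN$ coincides with the normalizer in $G$ of the subalgebra $\mathfrak{a}\oplus\mathfrak{n}$, hence is closed; cocompactness of $P_0$ in $G$ follows at once from the Iwasawa decomposition, which identifies $G/P_0$ with the compact space $K/M$. Since $A$ normalizes $N$ and $M$ centralizes $A$ while normalizing each restricted root space $\mathfrak{g}_\alpha$, the product $AN$ is a closed connected subgroup normal in $P_0$; it is solvable because $AN/N\cong A$ is abelian and $N$ is nilpotent. Finally the quotient $P_0/AN\cong M$ is compact, so $AN$ is cocompact in $P_0$, and combining with cocompactness of $P_0$ in $G$ yields cocompactness of $AN$ in $G$.

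The main obstacle, or rather the technically delicate point, is the passage from the algebraic Iwasawa decomposition at the Lie algebra level to the global diffeomorphism $K\times A\times N\to G$. The Lie algebra decomposition is routine once the restricted root space decomposition is in hand, but showing that $A$ and $N$ embed as closed simply connected subgroups and that their product with $K$ covers $G$ without overlap requires a careful argument involving the polar decomposition of $G$; this is precisely where the hypothesis that $G$ is a real algebraic group is used, since it guarantees that $K$, $A$, $N$ are algebraic subgroups of $G$ and that the factorization has the expected global form.
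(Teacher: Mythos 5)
Note first that the paper does not supply its own proof of this proposition: it is explicitly invoked as a recalled result from the structure theory of semisimple Lie groups, with a reference to Helgason, Chapter~VI, \S3. There is therefore nothing in the paper to compare your argument against; what follows is a critique of your sketch on its own terms.

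Your Lie-algebra-level argument (Cartan involution, restricted root space decomposition, the direct sum $\mathfrak g = \mathfrak k \oplus \mathfrak a \oplus \mathfrak n$ obtained via $X=(X+\theta X)-\theta X$ for $X\in\mathfrak g_{-\alpha}$, together with $\mathfrak g_0=\mathfrak m\oplus\mathfrak a$) is the standard one and is correct. The globalization step, however, contains a genuine error. You assert that ``every element of $\exp(\mathfrak p)$ admits a unique factorization as a product of an element of $A$ and one of $N$,'' i.e.\ that $\exp(\mathfrak p)\subset AN$, and propose to combine this with the polar decomposition $G=K\exp(\mathfrak p)$. This inclusion is false. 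Already in $SL(2,\R)$ with $N$ lower unipotent, $\exp(\mathfrak p)$ is the set of positive-definite symmetric matrices of determinant one, while $AN$ is the set of lower triangular matrices with positive diagonal; a non-diagonal symmetric matrix is not triangular. More structurally, $\mathfrak p$ and $\mathfrak a\oplus\mathfrak n$ are both linear complements of $\mathfrak k$ in $\mathfrak g$ but are distinct subspaces (since $\mathfrak n$ is not $\theta$-stable), and the sets $\exp(\mathfrak p)$ and $AN$ are correspondingly distinct. The actual globalization does not go ``polar decomposition plus a factorization inside $\exp(\mathfrak p)$''; one shows directly that the multiplication map $K\times A\times N\to G$ is a diffeomorphism (a local diffeomorphism by the Lie algebra splitting, then a bijection by a separate argument, e.g.\ first for the adjoint group and then lifting, or via a Gram--Schmidt argument in a faithful representation). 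You rightly flag the globalization as the delicate step, but the specific device you propose for it does not work. A secondary gap: that $\exp:\mathfrak a\to A$ and $\exp:\mathfrak n\to N$ are diffeomorphisms onto simply connected groups does not by itself give closedness of $A$ and $N$ in $G$; that too requires a short additional argument. Part~(ii) is fine as sketched once part~(i) is granted.
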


\begin{remark}\label{rem:example}
	An example of an Iwasawa decomposition is given by the special linear group ${\rm SL}(\R,d)=KAN$  with $K={\rm SO}(\R,d)$ the special orthogonal group, $A$ the subgroup of diagonal matrices of ${\rm SL}(\R,d)$, and $N$ the subgroup of lower triangular matrices with all diagonal entries equal to $1$. 
\end{remark}

	\begin{proof}[Proof of Theorem~\ref{thm:closedorbit}]
Let $B$ be a connected Lie subgroup of $\GL(\R,d)$. We should prove that 
the associated representation $\varphi:B\times\mathbb{S}^{d-1}\to \mathbb{S}^{d-1}$ admits at least one closed, hence compact, orbit.
		In fact, it is actually enough to show that there exists a closed cocompact 
subgroup $C$ of $B$ such that $C$ has a  
compact 
orbit $
			\mc{O}_C(x_0)=\left\{ \varphi(c,x_0)
	   \mid c\in C \right\}
		$.  Indeed, assume that such a compact orbit exists and 
		choose a compact subset  $H$ of $B$ such that $B=HC$. 
		Then 
\[
\mc{O}_B(x_0)=\varphi(H\times \mc{O}_C(x_0))
\]
is compact in 	$\mathbb{S}^{d-1}$.

    Now we claim that any connected Lie group has a closed cocompact, connected and solvable 
subgroup, and to prove this assertion we proceed as follows: factoring out the solvable radical,
it is not restrictive to assume $B$ to be a semisimple linear group; in particular $B$ is (real) algebraic. The claim now follows by Proposition~\ref{prop:Iwasawa} above.

We can then		
assume without loss of generality that  $B$ is a connected
		solvable Lie subgroup of $\GL(\R,d)$. 
		By the Lie-Kolchin theorem (see \cite{Kolchin} or, e.g., \cite[Section 17.6]{Hum})
there exists a common eigenvector $v\in \C^d$ for all matrices in $B$. 
The real vector subspace $V=\mathrm{span}\{ v+ \ov{v},\,i(v-\ov{v}) \}$ is then invariant for all matrices in $B$. 
		In the case in which $V$ is one-dimensional,  its projectivization reduces to a singleton, and the proof is complete. If, instead, $V$ has dimension two, then its unit sphere  
		is topologically a circle, and we conclude noticing that the action of a connected group on a circle is either transitive or has a fixed point. 
	Indeed, if the action is not transitive then each orbit is either a point or an open arc, being an homogeneous connected subset of the circle. In the latter case the two endpoints of the arc will be fixed by the action.
	\end{proof}

Theorem~\ref{thm:closedorbit} has an interesting consequence on the controllability properties of the projected system \eqref{eq:projsyst}, which seems new
 even in the case $\tau=0$.

	\begin{defi}\label{defi:contr}
		We say that the projected system \eqref{eq:projsyst} is:
		\begin{itemize}
			\item [i)] \emph{exactly controllable} 
			if, for every $s_0\in\Prd$, the set $\Smt(s_0)$
is the whole $\Prd$, where $\Smt$ is defined as in \eqref{eq:Smt};
			\item [ii)] \emph{approximately controllable}  
			if, for every $s_0\in\Prd$, the set $\Smt(s_0)$
			is dense in $\Prd$.
		\end{itemize} 
	\end{defi}
	
	Then we have the following result.
	
	\begin{proposition}\label{prop:eaxct-approximate}
		The projected system \eqref{eq:projsyst} is exactly controllable if and only if it is approximately controllable.
	\end{proposition}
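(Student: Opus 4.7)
The direction ``exact $\Rightarrow$ approximate'' is immediate from $\Smt(s_0)\subset\overline{\Smt(s_0)}$. For the converse, I would assume approximate controllability, namely $\overline{\Smt(s_0)}=\Prd$ for every $s_0\in\Prd$, and aim to prove $\Smt(s_0)=\Prd$. The plan proceeds in three stages: first promote approximate controllability to transitivity of $\mc{P}_S$ on $\Prd$; next show that the unique $\tau$-ICS fills $\Prd$ and that a dense open subset is uniformly reachable from everywhere; and finally invoke local reversibility to realize each remaining target point as a funnel endpoint.

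For the first stage I would apply Theorem~\ref{thm:closedorbit} to the connected Lie subgroup $\mc{P}_S$ of $\GL(\R,d)$: this produces some $s^*\in\Prd$ with $\mc{P}_S$-orbit $\mc{O}(s^*)$ closed. Since $\Smt(s^*)\subset\mc{O}(s^*)$ and $\overline{\Smt(s^*)}=\Prd$, one obtains $\mc{O}(s^*)=\overline{\mc{O}(s^*)}\supset\Prd$, hence $\mc{O}(s^*)=\Prd$; because $\mc{P}_S$-orbits partition $\Prd$, $\mc{O}(s)=\Prd$ for every $s\in\Prd$, so $\mc{P}_S$ acts transitively. Proposition~\ref{prop:orbitanalytic}(i) then yields the Lie algebra rank condition \eqref{eq:mainhyp} for the projected system.

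Next, Theorem~\ref{thm:uniq} together with Remark~\ref{rmk:Huniq} produces a unique $\tau$-ICS $D\subset\Prd$, closed by Remark~\ref{rem:closedinvset}; for $q\in D$ the ICS property gives $\overline{\Smt(q)}=D$, while approximate controllability gives $\overline{\Smt(q)}=\Prd$, forcing $D=\Prd$. By Lemma~\ref{lemma:properties}(iv) and Corollary~\ref{cor:uniftime} there exists an open dense $\mathfrak{C}\subset\Prd$ with $\mathfrak{C}\subset\Smt(s)$ for every $s\in\Prd$, so any $s_1\in\mathfrak{C}$ is already in $\Smt(s_0)$.

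The remaining task is to treat $s_1\in\Prd\setminus\mathfrak{C}$. The plan is to produce $q_0\in\Prd$, vector fields $X_1,\dots,X_d\in\pi_*S$, and times $\tau_1,\dots,\tau_d>\tau$ such that the map $(t_1,\dots,t_d)\mapsto e^{t_dX_d}\circ\cdots\circ e^{t_1X_1}(q_0)$ is a local diffeomorphism at $(\tau_1,\dots,\tau_d)$ with value $s_1$ there. Proposition~\ref{prop:ReverseKren} would then provide a neighborhood $\Omega$ of $q_0$ from which $s_1$ is reachable by a dwell-time signal; approximate controllability supplies a point of $\Smt(s_0)\cap\Omega$, whence $s_1\in\Smt(s_0)$. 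The main difficulty I foresee is the construction of such a tuple $(q_0,X_i,\tau_i)$ for every $s_1$: the local-diffeomorphism condition is open in $q_0$ only for fixed $(X_i,\tau_i)$, while the endpoint equation pins $q_0$ to the specific preimage $(e^{\tau_dX_d}\circ\cdots\circ e^{\tau_1X_1})^{-1}(s_1)$, which need not a priori lie in the good region; I expect to overcome this by exploiting the flexibility in choosing $(X_i,\tau_i)$ via Remark~\ref{rmk:bestkren} at varying base points and patching together the resulting open sets of realizable endpoints so as to cover $\Prd$.
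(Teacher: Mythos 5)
Your first two stages reproduce the paper's opening moves exactly: invoke Theorem~\ref{thm:closedorbit} to get a closed orbit $\mc{O}(s^*)$, combine with approximate controllability to get $\mc{O}(s^*)=\Prd$, and read off the Lie algebra rank condition from Proposition~\ref{prop:orbitanalytic}(i). (Your extra remark about transitivity of $\mc{P}_S$ is correct but not needed.) Your stages 3 and 4, passing through Theorem~\ref{thm:uniq}, Lemma~\ref{lemma:properties}(iv), and Corollary~\ref{cor:uniftime} to establish $D=\Prd$ and produce the set $\mathfrak{C}$, are also correct but are \emph{not} part of the paper's argument and do not feed into your final step in any essential way: $\mathfrak{C}$ is only dense, so it cannot by itself account for an arbitrary target $s_1$.

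The real gap is in your last stage, and you have in fact correctly diagnosed it yourself: you want to produce $(q_0,X_1,\dots,X_d,\tau_1,\dots,\tau_d)$ with $\tau_i>\tau$ so that $(t_1,\dots,t_d)\mapsto e^{t_dX_d}\circ\cdots\circ e^{t_1X_1}(q_0)$ is a local diffeomorphism at $(\tau_1,\dots,\tau_d)$ with value $s_1$, but Remark~\ref{rmk:bestkren} gives you a good map emanating \emph{from} a prescribed base point, not one \emph{ending} at a prescribed target, and the ``patching'' idea you sketch only shows that the set of realizable targets is open and nonempty, not that it is all of $\Prd$. The paper resolves exactly this issue with a time-reversal trick that you did not identify: since \eqref{eq:mainhyp} holds for $\pi_*S$ if and only if it holds for $-\pi_*S$, one applies Proposition~\ref{prop:notempty} (equivalently Remark~\ref{rmk:bestkren} together with Proposition~\ref{prop:ReverseKren}) to the \emph{time-reversed} system with initial point $s_1$. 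This yields a nonempty open set $\Omega$ of points $\hat s=e^{-t_1X_1}\circ\cdots\circ e^{-t_mX_m}(s_1)$ with all $t_i\ge\tau$; reversing the flow shows $s_1\in\Smt(\hat s)$ for every $\hat s\in\Omega$. Approximate controllability then supplies a point of $\Smt(s_0)\cap\Omega$ and a single concatenation finishes the proof. Without this reversal your construction does not close, because there is no a priori reason the specific preimage forced on you by the endpoint equation lies in the region where the differential is surjective.
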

	
	\begin{proof}
		One implication being trivial, we should just prove that the approximate controllability of \eqref{eq:projsyst} implies its exact controllability. By Theorem~\ref{thm:closedorbit}, let us fix $s_0\in\Prd$ such that the orbit $\mc{O}(s_0)$ is closed. Then  the  chain of inclusions
		\[
			\Smt(s_0)\subset\mc{O}(s_0)\subset \Prd
		\]
		implies, upon passing to the closures, that $\mc{O}(s_0)=\Prd$. 
		By point i) of Proposition~\ref{prop:orbitanalytic} we thus deduce that 
		 \eqref{eq:projsyst} satisfies hypothesis \eqref{eq:mainhyp} on $\Prd$.
		
		Take now $s_0,s_1\in \Prd$ and let us prove that $s_1\in\Smt(s_0)$. Applying Proposition~\ref{prop:notempty} to the time-reverse system, we deduce that there exists 
		a nonempty open subset $\Omega$ of  $\Prd$ such that 
		\[s_1\in \Smt(\hat s)\qquad \forall\hat s\in \Omega.\]
By the approximate controllability assumption, moreover, $\Smt(s_0)\cap \Omega\ne \emptyset$. By concatenating a signal in $\mc{S}^\tau$ driving $s_0$ to some $\hat s\in \Omega$ and another signal in $\mc{S}^\tau$ driving $\hat s$ to $s_1$, we get the desired conclusion. 
	\end{proof}
	
	\begin{remark}
		For a general nonlinear system
		approximate and exact controllability are not equivalent, unless  the 
		Lie algebra rank condition
		is assumed to hold. 
		Here, instead, no 
		Lie algebra rank condition 
is assumed and  the result is a consequence of the special structure of the system (which guarantees the existence of a closed orbit, as stated in Theorem~\ref{thm:closedorbit}). 
		Proposition~\ref{prop:eaxct-approximate} extends the equivalence between approximate and exact controllability obtained in \cite[Theorem 17]{SBRG}  for closed finite-dimensional quantum systems. Using the notations of the present paper, the class of systems studied in \cite[Theorem 17]{SBRG} corresponds to the case where $\tau=0$, $d$ is even, and $S$ is contained in $U(d/2)$ (seen as a subset of $M_{d}(\R)$, up to the canonical identification of $\C^{d/2}$ and $\R^{d}$). 
\end{remark}

\begin{remark}
By \cite[Propositions 1 and 2]{BacciottiVivalda}, 
($\pi\Sigma_0$)  is exactly controllable on $\Prd$ if and only if the projected system on the sphere $\mathbb{S}^{d-1}$
is exactly controllable. 
In particular, Proposition~\ref{prop:eaxct-approximate} extends to the projection of 
systems of the type ($\Sigma_0$)
on the sphere.
\end{remark}

	\bibliographystyle{abbrv}
	\bibliography{Biblio}
	
\end{document}